\documentclass[preprint]{ejpecp}
\usepackage{enumerate}
\usepackage{mathdots}
\usepackage{varioref}
\usepackage{hyperref}
\usepackage[capitalize,noabbrev]{cleveref}
\crefname{ineq}{inequality}{inequalities}
\creflabelformat{ineq}{#2{\upshape(#1)}#3} 
\usepackage[caption=false,font=footnotesize]{subfig}
\usepackage{tikz}
\usepackage{tikz-3dplot}
\usetikzlibrary{graphs,positioning,calc}
\tikzset{node distance=3mm,
      circled/.style={
        minimum size=40pt,
        circle,
        scale=0.8,
        draw=black
        },
      invis/.style={
        circle,
        scale=0.8,
        draw=white,
        color=white,
        opacity=0
        }
}

\SHORTTITLE{Entropic Symmetrization Resistance}
\TITLE{Entropic Symmetrization Resistance}

\AUTHORS{%
  Mokshay~Madiman\footnote{University of Delaware, United States of America.
    \EMAIL{madiman@udel.edu} \url{https://mokshaymadiman.wordpress.com}}
  \and %% remove this line and below if single author
  Emma~Pollard\footnote{Centro de Investigación en Matemáticas, México.
    \EMAIL{emma.pollard@cimat.mx} %\url{https://sites.google.com/view/emmapollard}
    }}

\KEYWORDS{probability ; symmetrization ; Bernoulli ; entropy ; variance} % Separate items with ;

\AMSSUBJ{60C05} 

\SUBMITTED{February 29, 2098} % Edit.
\ACCEPTED{February 29, 2099} % Edit.

\VOLUME{0}
\YEAR{2024} 
\PAPERNUM{0} 
\DOI{10.1214/YY-TN} 

\ABSTRACT{An asymmetric random variable $X$ in the reals is said to be variance symmetrization resistant if every independent random variable $Y$ in the reals that produces a symmetric sum $X+Y$ has a greater variance than that of $X$. 
Asymmetric Bernoulli random variables were shown to be variance symmetrization resistant by Kagan, Mallows, Shepp, Vanderbei, and Vardi (1999); Pal (2008) gave a proof using stochastic calculus. 
We introduce the notion of entropic symmetrization resistance on locally compact groups-- this means that the entropy of any independent symmetrizer $Y$ must exceed that of $X$. 
We show that asymmetric Bernoulli random variables exhibit entropic symmetrization resistance, 
and show a multidimensional generalization. 
We also explore basic aspects of the entropic symmetrization resistance problem in compact groups. In particular, we show that any distribution on a finite group that is entropic symmetrization resistant must lie on the boundary of the probability simplex, and describe precisely the class of all entropic symmetrization resistant distributions on $\mathbb{Z}_3$ and $\mathbb{Z}_4$.
}

\DeclarePairedDelimiter\abs{\lvert}{\rvert}%
\DeclarePairedDelimiter\ang{\langle}{\rangle}%
\DeclarePairedDelimiter\norm{\lVert}{\rVert}%
\DeclarePairedDelimiter{\ceil}{\lceil}{\rceil}%
\DeclarePairedDelimiter{\floor}{\lfloor}{\rfloor}%
\DeclarePairedDelimiter{\bbrack}{\llbracket}{\rrbracket}%
\makeatletter
\let\oldabs\abs
\def\abs{\@ifstar{\oldabs}{\oldabs*}}
\let\oldang\ang
\def\ang{\@ifstar{\oldang}{\oldang*}}
\let\oldnorm\norm
\def\norm{\@ifstar{\oldnorm}{\oldnorm*}}
\let\oldceil\ceil
\def\ceil{\@ifstar{\oldceil}{\oldceil*}}
\let\oldfloor\floor
\def\floor{\@ifstar{\oldfloor}{\oldfloor*}}
\let\oldbbrack\bbrack
\def\bbrack{\@ifstar{\oldbbrack}{\oldbbrack*}}
\makeatother
\DeclarePairedDelimiterX{\infdivx}[2]{(}{)}{%
  #1\;\delimsize\|\;#2%
}
\newcommand{\DKL}{D_{KL}\infdivx}

\DeclareMathOperator{\supp}{{\rm supp}}
\DeclareMathOperator{\Uniform}{Uniform}
\DeclareMathOperator{\Bern}{Bernoulli}
\DeclareMathOperator{\Bernoulli}{Bernoulli}

\DeclareMathOperator{\Poisson}{Poisson}
\DeclareMathOperator{\Var}{Var}
\DeclareMathOperator{\Cov}{Cov}

\DeclareMathOperator{\conv}{conv}
\newcommand{\calY}{\Sym}
\DeclareMathOperator{\Sym}{Sym}
\DeclareMathOperator{\op}{op}
\newcommand{\restr}[2]{#1{\mathchoice{\Big\rvert}{\big\rvert}{\rvert}{\rvert}}_{#2}} 
\newcommand{\Exp}{\operatorname*{Exp}}
\newcommand{\RL}{\mathbb{R}}
\renewcommand{\choose}{\binom}

\newcommand{\calB}{\mathcal{B}}

\newcommand{\calP}{\mathcal{P}}
\newcommand{\ra}{\rightarrow}
\newcommand{\hatf}{\hat{f}}
\newcommand{\aff}{{\rm aff}}
\newcommand{\tr}{{\rm tr}} %trace
\newcommand{\Ex}{{\rm Ex}} %extreme points
\newcommand{\R}{\mathbb{R}}
\newcommand{\Z}{\mathbb{Z}}
\newcommand{\convolve}{*}

\begin{document}
%\tableofcontents
\section{Introduction}

Quantitative measures of symmetry of various kinds of objects appear in many parts of mathematics. If one is dealing with a real-valued function $f:G\ra \RL$ on a group $(G,+)$, and one wishes to measure how symmetric $f$ is, one may consider the splitting $f=f_s + f_a$ into symmetric and anti-symmetric parts by setting $f_s(x)=\frac{1}{2} [f(x)+f(-x)], f_a(x)=\frac{1}{2} [f(x)-f(-x)]$, and then measure the size of the anti-symmetric part using some norm on (an appropriate subset of) the functions on $G$. As another example, the book \cite{Toth15:book} is a study of various quantitative measures of symmetry for convex sets in a Euclidean space.
Our focus in this paper is on some intriguing measures of symmetry for probability measures on a locally compact group, two of which we introduce in this paper and wish to advertise as objects deserving of further study.
We focus on developing some basic results on quantitative asymmetry in two cases: 
Euclidean spaces $\RL^n$ (with particular emphasis on $n=1$), and  compact groups (with particular emphasis on finite abelian groups). 

Our investigation was inspired by work of Kagan, Mallows, Shepp, Vanderbei and Vardi \cite{KMSVV99}, who showed that asymmetric Bernoulli random variables exhibit a phenomenon of \emph{symmetrization resistance}, as measured by the variance. Specifically, they defined a {\it symmetrizer} of $X$ to be any random variable $Y$ independent of $X$ and such that $X+Y$ has a symmetric distribution about zero, and 
showed that if $X$ is a Bernoulli random variable with  parameter $p\neq\frac{1}{2}$, then the variance of any symmetrizer of $X$ is at least that of $X$ itself. Since $Y=-X'$ where $X'$ is an independent copy of $X$ is always a symmetrizer, their result has the implication that one cannot do better when looking for symmetrizers than to use this obvious one. As observed by \cite{KMSVV99}, symmetrization resistance is surprisingly difficult to prove even for the simplest non-trivial examples. Perhaps as a consequence, there has been barely any research on this phenomenon in the intervening decades; the only relevant work (to our knowledge) is a paper of Soumik Pal \cite{Pal08}, who gave a second proof of the symmetrization resistance of the asymmetric Bernoulli distribution using stochastic calculus. 

While variance is a satisfactory way to quantify the ``spread'' of a probability distribution on the real line, 
we wish to consider other notions of spread as well that are meaningful for more general spaces. 
With this in mind, we make the following definitions. Let $\mathcal{P}(G)$ be the set of probability measures on (the Borel $\sigma$-field of) a topological space $G$, and $\calP$ 
be a convex subset of $\mathcal{P}(G)$.

\begin{definition}
A {\it spread functional} on $\calP$
is a quasiconcave, upper semicontinuous function $\Phi:\calP\ra\bar{\RL}$. 
\end{definition}

Let $\delta_x$ denote the Dirac measure or point mass at $x$ defined by $\delta_x(A)=1$ if $x\in A$, and $\delta_x(A)=0$ otherwise.
We say $\calP$ {\it contains point masses} if it contains the Dirac measures $\delta_x$ for each $x\in G$. Clearly, if $\calP$ contains point masses, $\calP$ contains the set $\calP_f(G)$ of all probability measures on $G$ with finite support.
Motivated by the fact that the variance is 0 for any Dirac measure on the real line, we make the following definition.

\begin{definition}
A spread functional $\Phi:\calP\ra\bar{\RL}$ whose domain contains point masses is {\it grounded} when $\Phi(\mu)=0$ if and only if $\mu$ is a point mass. 
\end{definition}

For a grounded spread function, the quasiconcavity property
$\Phi(\lambda \mu +(1-\lambda)\nu) \geq \max\{\Phi(\mu), \Phi(\nu)\}$ implies that $\Phi(\mu)$ is nonnegative for any probability measure $\mu$ with finite support. Moreover, the upper semicontinuity of $\Phi$ (with respect to convergence in distribution) implies that if $G$ is a Polish space (so that every probability measure on $G$ can be approximated by finitely supported ones), then $\Phi$ is nonnegative on  $\calP$.

For $G=\RL$, the variance $V(\mu)=\int_{\RL} x^2 d\mu -(\int _{\RL} x d\mu)^2$ is a linear spread functional on $\calP(\RL)$; however, a disadvantage of the variance is that it is not obvious how to extend the definition to more general spaces $G$. At the cost of restricting attention to the set $\calP_c(G)$ of probability measures on $G$ with countable support, the Shannon entropy applies to arbitrary sets $G$; indeed, if $A$ is the support of $\mu\in \calP_c(G)$, then the entropy 
$$
H(\mu)=-\sum_{x\in A} p(x)\log p(x),
$$
where $p$ is the probability mass function of $\mu$ on $A$. One may extend the entropy to a spread functional on $\calP(G)$ by setting $H(\mu)=\infty$ when $\mu\notin \calP_c(G)$. 
More generally, one can also consider the R\'enyi entropies of arbitrary order (although we do not do so in this paper).

Let $(G,+)$ be a locally compact group (written additively even if $G$ is nonabelian) equipped with its Borel $\sigma$-field $\calB(G)$. Let $*$ denote the corresponding convolution operation on $\calP(G)$, i.e., $\mu*\nu$ is the pushforward of the probability measure $\mu\otimes \nu$ under the function $(x,y)\mapsto x+y$.

\begin{definition}
For convex $\calP\subset \calP(G)$ and $\mu\in \calP$, the set $\Sym_{\calP}(\mu)$ of symmetrizers for $\mu$ is defined by 
$$
\Sym_{\calP}(\mu)=\{\nu\in\calP:\mu*\nu(B)=\mu*\nu(-B) \forall B\in \calB(G)\} .
$$
The {\em $\Phi$-asymmetry} of $\mu$ with respect to $\calP$ is defined by
$$
A_{\Phi, \calP}(\mu) := \inf_{\{\nu\in \Sym_{\calP}(\mu)\}} \Phi(\nu) . 
$$
\end{definition}

If $\Phi$ is a grounded spread function, it is easy to see that $A_\Phi(\mu)=0$ if and only if $\mu$ itself is symmetric up to a shift; so this is a meaningful measure of asymmetry.

Let $X$ be a $G$-valued random variable on some probability space. It is a common practice to identify a random variable with its distribution and write $\Phi(X)$ for $\Phi(P_X)$ where $P_X$ is the distribution of $X$; this is routinely done for both the variance $V(X)$ and the Shannon entropy $H(X)$. By adopting this practice, we may rewrite the definition above as follows:
$$
A_{\Phi, \calP}(X) := \inf_{\{Y\in Sym_{\calP}(X)\}} \Phi(Y) ,
$$
where $\Sym_{\calP}(X)$ is the set of random variables $Y$ with distributions in ${\calP}$, such that $X, Y$ are independent and $X+Y$ has a symmetric distribution.

For locally compact groups that are not discrete, it is of particular interest to consider the  
set $\calP_c(G)$ of all probability measures on $G$ that are absolutely continuous with respect to the (left) Haar measure $dx$. Of course, in this case, we can identify any element of $\calP_c(G)$ by its density $f:G\ra [0,\infty)$ with respect to Haar measure. Then it is natural to consider the {\it differential entropy} as a spread functional, defined by
$$
h(f)=-\int_G f(x)\log f(x) dx.
$$
This spread functional typically can take negative values. Nonetheless it is natural to ask questions about $h$-asymmetry on $\calP_c(G)$, since one can view them as questions about the related functional $N(f)=e^{h(f)}$, which is always nonnegative. 

\begin{remark}
If the Haar measure of $G$ is non-atomic, then $\calP_c(G)$ does not contain point masses. 
In this case, although $N(f)$ is nonnegative, the natural upper semicontinuous extension of $N$ from $\calP_c(G)$ to $\calP(G)$ is still not grounded, since we would set $h(\mu)=-\infty$ for any probability measure $\mu$ that is not absolutely continuous with respect to Haar measure, whence $N(\mu)=0$ for all $\mu\in \calP_f(G)$.

On the other hand, if $G$ is discrete, so that one may take the Haar measure to be counting measure, $\calP(G)=\calP_c(G)$ is the closure of $\calP_f(G)$, and the differential entropy $h$ and entropy $H$ coincide. In particular, $h$ is a grounded spread functional for discrete groups.
\end{remark}

\begin{definition}
A probability measure $\mu\in \calP(G)$ is {\em $\Phi$-symmetrization resistant relative to $\calP$ with constant $c>0$} if  
$$
A_{\Phi, \calP}(\mu) \geq c\Phi(\mu) .
$$
If this holds with $c=1$, we simply say that $\mu$ is {\it $\Phi$-symmetrization resistant} in $\calP$.
\end{definition}

In this language, the result of Kagan et al. \cite{KMSVV99} says that for any $p\neq \frac{1}{2}$, the Bernoulli distribution with parameter $p$ is $V$-symmetrization resistant in $\calP(\RL)$.

Our first main contribution in this paper is the following theorem (see Theorem~\ref{thm:HY_exceeds_HX}).

\begin{theorem}
Asymmetric Bernoulli distributions on $\RL$ are $H$-symmetrization resistant in $\calP(\RL)$.    
\end{theorem}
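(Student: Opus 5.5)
Throughout, $X\sim\Bernoulli(p)$ with $p\neq\frac12$, $q=1-p$, and $H(X)=-p\log p-q\log q$. Let $Y$ be an independent symmetrizer with $Z=X+Y$ symmetric. We may assume $Y$ is discrete, since otherwise $H(Y)=\infty$. The plan has three stages: reduce to a single lattice, solve the symmetrization equation explicitly in Fourier space, and then minimize $H(Y)$ over the resulting convex family.

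\emph{Reduction to a single lattice.} Partition the support of $Y$ into cosets $a+\Z$. Because $X\in\{0,1\}$, the part of $Z$ carried by $a+\Z$ comes only from the part of $Y$ on $a+\Z$, and the reflection $z\mapsto -z$ sends $a+\Z$ to $-a+\Z$. Group the cosets into orbits $\{a+\Z,-a+\Z\}$ and let $K$ be the orbit containing $Y$. Then $K$ is a function of $Y$, and it is also a function of $Z$; since $X$ is independent of $Y$, it is independent of $K$. Conditionally on $K=k$, $Y$ is still a symmetrizer of $X$, so $H(Y)\ge H(Y\mid K)=\sum_k P(K=k)H(Y\mid K=k)$, and it suffices to treat one orbit.

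If $2a\notin\Z$, the two cosets are distinct. The mass equation on $a+\Z$ then forces the law of $Y$ there to be a reflected translate of its law on $-a+\Z$, convolved with $\Bernoulli(p)$ versus its reflection. I expect this to reduce to the same one-lattice problem, or to an easier one. If $2a\in\Z$, we may translate and assume $Y$ is supported on $\Z$ or on $\frac12+\Z$, and after rescaling and shifting this is a problem on $\Z$ with a symmetric centre.

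\emph{Fourier solution.} On $\Z$, symmetry of $Z$ means that $\varphi_Z(\theta)=(q+pe^{i\theta})\varphi_Y(\theta)$ is real. Since $p\neq q$, the factor $q+pe^{i\theta}$ never vanishes, so
\[
\varphi_Y(\theta)=(q+pe^{-i\theta})\,\frac{\varphi_Z(\theta)}{q^2+p^2+2pq\cos\theta}.
\]
Write $r=\min(p,q)/\max(p,q)<1$. The kernel $1/(q^2+p^2+2pq\cos\theta)$ has absolutely summable Fourier coefficients $k_n=(-r)^{|n|}/|q^2-p^2|$. Hence the law of $Y$ is $(q\delta_0+p\delta_{-1})*w$, where $w=Z*k$ is a \emph{signed} symmetric measure of total mass $1$. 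This gives a complete parametrization: every symmetrizer corresponds to a symmetric probability law of $Z$ for which the resulting $f_Y=(q\delta_0+p\delta_{-1})*(Z*k)$ is nonnegative. The family $\mathcal C$ of admissible laws of $Z$ is convex, and the map from $Z$ to the law of $Y$ is affine. When $w$ happens to be a genuine probability measure, $Y=-X'+W$ and $H(Y)\ge H(-X'+W)\ge H(X')=H(X)$ follows at once, because convolution with an independent variable does not decrease entropy.

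\emph{Main obstacle: signed $w$.} The real difficulty is that $w$ can have negative atoms, coming from the alternating sign of $k$. For this case the plan is to use concavity. $H(Y)$ is a concave function of the law of $Y$, hence of $Z\in\mathcal C$, so its infimum over $\mathcal C$ is approached along extreme points. The extreme points should be those symmetric $Z$ for which the nonnegativity constraints on $f_Y$ are as tight as possible. I would first try to show that extreme points force $f_Y$ to vanish on one side of a symmetric window, reducing to explicit finitely supported symmetrizers such as $-X'$ shifted, or $Z=\frac12(\delta_m+\delta_{-m})$-type laws with geometric corrections. For each of these I would verify $H(Y)\ge H(X)$ by direct computation, using the geometric structure $r^{|n|}$ to sum the entropy series. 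A lower semicontinuity argument is needed to pass from extreme points to the infimum over the infinite-dimensional set $\mathcal C$. If the extreme-point classification proves intractable, the fallback is to use the recursion $f_Z(z)=qf_Y(z)+pf_Y(z-1)$ with $f_Z(z)=f_Z(-z)$ to write $f_Y$ as a convex combination, over the symmetric pairs $\pm m$ of the support of $Z$, of explicit two-sided geometric profiles, and then to bound the entropy of each profile below by $H(X)$.
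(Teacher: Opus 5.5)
\textbf{There is a genuine gap: the case you call the main obstacle is where the whole theorem lives, and there you have a plan rather than a proof.}

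Your orbit reduction via $H(Y)\ge H(Y\mid K)$ is sound; it is the entropic analogue of the paper's restriction to the sets $S^r$. The parametrization $f_Y=f_{-X}\convolve k\convolve f_Z$ is also correct. The problems are in what follows.
\begin{itemize}
\item \emph{The extreme-point route is not carried out.} The extreme points are neither classified nor shown to satisfy $H\ge H(X)$.
\item \emph{The passage to the infimum does not work as described.} The symmetrizer set is not compact, since mass can escape to infinity (e.g.\ along $\hatf_k^0$ as $k\to\infty$). So no Bauer or Krein--Milman principle is available. Moreover, lower semicontinuity of $H$ gives $H(\lim g_n)\le\liminf H(g_n)$, which is the wrong direction for transferring a lower bound from approximants to their limit. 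The paper explicitly sets the Krein--Milman route aside on $\R$.
\item \emph{The fallback fails outright.} Each profile $f_{-X}\convolve k\convolve\frac12(\delta_m+\delta_{-m})$ has negative entries, because a symmetric two-point law is never $X+V$ with $V$ independent of $X$. Your decomposition is therefore a mixture of signed sequences, and concavity of $H$ cannot be applied to it.
\item \emph{The case $2a\notin\Z$ is left open.} It is only ``expected'' to reduce to the same or an easier problem.
\end{itemize}

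\textbf{The missing idea is a cheap localization followed by a single linear inequality.}

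Work in the paper's normalization $X\in\{\pm1\}$, $p>\frac12$. Under this change your $Y$ becomes $2Y+1$, your point $-\frac12$ becomes $0$, and your orbit $\frac12+\Z$ becomes $2\Z$. Suppose $\abs{\supp f_Y}\ge3$.
\begin{itemize}
\item The symmetry equations give $f_Y(y)<p$ for every $y\ne0$.
\item An atom with mass in $[q,p]$ gives $H(Y)>H_B(p)=H(X)$, because $H(Y)$ strictly exceeds the entropy of the indicator of $\{Y=y\}$.
\item If all atoms are below $\frac12$, then $H(Y)\ge1$ bit $>H(X)$.
\end{itemize}
So any counterexample has $f_Y(0)>p$. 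This disposes at once of every orbit except $2\Z$. That includes your case $2a\notin\Z$, and the orbit $2\Z+1$, where support size $2$ forces $f_Y=f_{-X}$.

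On $2\Z$ it remains only to show $f_Y(0)\le\frac12$. The paper expands $f_Y=\sum_{k\ge1}\alpha_k\hatf_k^0$, with $\sum_k\alpha_k=1$ and $\alpha_1=2f_Y(0)$. Nonnegativity of $f_Y$ at $-2j$ gives $p\alpha_j+q\alpha_{j+1}\ge0$. Hence every negative $\alpha_j$ is followed by a positive $\alpha_{j+1}>\abs{\alpha_j}$. Pairing them gives $\sum_{k\ge2}\alpha_k\ge0$, so $f_Y(0)\le\frac12<p$.

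Your Fourier formula does not deliver this on its own. In your coordinates (with $p>q$) it gives
$f_Y(-\frac12)=\frac1p\sum_{m\ge0}(-r)^m f_Z(m+\frac12)\le\frac1{2p}$,
and this bound exceeds $p$ whenever $p<1/\sqrt2$. You would still have to feed in the nonnegativity of $f_Y$ at the other lattice points, which is exactly what the coefficient argument does.
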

 
We also show higher-dimensional analogues of this fact. Along the way,  we also give a new proof of the fact that asymmetric Bernoullis are $V$-symmetrization resistant in $\calP(\RL)$ (and a multidimensional generalization). Our analysis is based on an  investigation of the space of symmetrizer PMFs $\calY$, culminating in \cref{thm:Bernoulli_calY_representation}
where we show that a certain set of simple symmetrizer PMFs forms a basis for the affine hull of $\calY$. 

Our second main contribution is to begin an investigation of entropic symmetrization resistance in the setting of compact groups.
A key observation is the following theorem (see
Theorem~\ref{thm:no_symmetrizers_in_interior}).

\begin{theorem}
On a compact group $G$, if $\mu$ is $h$-symmetrization resistance in $\calP_c(G)$, then the density $f$ of $\mu$ has infimum 0 on $G$. 
\end{theorem}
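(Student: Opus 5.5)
Argue by contraposition: assume the density $f$ of $\mu$ satisfies $\inf_G f = \epsilon > 0$, and produce a symmetrizer $\nu\in\calP_c(G)$ with $h(\nu) < h(\mu)$. Normalize Haar measure so that $|G|=1$. Then $h\le 0$ on $\calP_c(G)$, with equality only for the uniform density $u\equiv 1$. The uniform measure is itself symmetric, so if $\mu$ is uniform the required strict inequality fails trivially for $\nu=\mu$. The case of interest is therefore $f\neq 1$, which gives $h(f)<0$.

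The symmetrizer is built by peeling off a uniform component. Write
\[
f = \epsilon + (1-\epsilon) g, \qquad g = \frac{f-\epsilon}{1-\epsilon}\ \ge 0 ,\quad \int g = 1 .
\]
Here $\epsilon<1$ because $f\neq 1$ and $\int f=1$. Set $\tilde g(x)=g(-x)$, the density of the reflection of $g$ (for compact groups Haar measure is inversion invariant). For any density $k$, convolution gives
\[
f*k = \epsilon + (1-\epsilon)\, g*k ,
\]
since $u*k=u$. Thus $f*k$ is symmetric as soon as $g*k$ is. A first candidate is $k=\tilde g$: the density $g*\tilde g$ is symmetric, because $g*\tilde g(-x)=\int g(y)g(y+x)\,dy$, which is the reflection of $\tilde g*g$ in the nonabelian case. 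In the nonabelian case one would use the order $\tilde g * g$ or check the convolution side carefully. Rather than compare $h(\tilde g)$ with $h(f)$ directly, it is better to exploit freedom in the symmetrizer.

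Consider the family $k_t = t\,u + (1-t)\,\tilde f$ for $t\in[0,1]$, where $\tilde f(x)=f(-x)$. Each $k_t$ is a symmetrizer, because $f*u=u$ and $f*\tilde f$ are both symmetric. Moreover $k_0=\tilde f$ has entropy $h(f)$. I would aim instead to go \emph{below} $h(f)$ by moving toward less uniform symmetrizers, namely $\nu_s = \tilde f + s(\tilde f - 1)$ with $s>0$ small. This remains a nonnegative density precisely because $f\ge\epsilon>0$: it requires $(1+s)\epsilon - s\ge 0$, i.e.\ $s\le \epsilon/(1-\epsilon)$. It is still a symmetrizer, since
\[
f*\nu_s=(1+s)\,f*\tilde f - s\,u
\]
is symmetric. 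This is the key point where the positive infimum is used.

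It remains to show $h(\nu_s)<h(f)$ for small $s>0$. The map $s\mapsto h(\nu_s)$ is concave along this line, because $h$ is concave. Its derivative at $s=0$ is
\[
-\int(\tilde f-1)\log\tilde f = -\int (f-1)\log f .
\]
This is strictly negative whenever $f\neq 1$, since $(f-1)\log f\ge 0$ pointwise with equality only where $f=1$. Concavity alone does not bound $h(\nu_s)$ from above, but the strictly negative derivative at $0$ suffices for small $s$. The main obstacle is justifying the derivative when $\log f$ may be unbounded above. If $f$ is unbounded, I would use $f\ge\epsilon$ to control $\log f$ from below, and the convexity of $t\log t$ to show the one-sided derivative exists in $[-\infty,0)$; monotone convergence for the difference quotients of the convex function $\phi(t)=t\log t$ along $\nu_s$ handles this. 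If instead $h(f)=-\infty$, then $\tilde f$ itself already attains the infimum value, so resistance fails only in the non-strict sense. I would treat this degenerate case separately, or note that the definition presumes finite entropy.
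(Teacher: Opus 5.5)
Your construction is the paper's own: with $f_-=\tilde f$ and $s=\varepsilon/(1-\varepsilon)$, your $\nu_s=(1+s)\tilde f-s\,u$ is exactly the paper's $b=(f_--\varepsilon u)/(1-\varepsilon)$, which is a symmetrizer by linearity and a density because $\inf f=\varepsilon>0$. The paper finishes without derivatives: writing $\tilde f=\frac{1}{1+s}\nu_s+\frac{s}{1+s}u$, concavity gives $h(\nu_s)\le h(f)-s\bigl(h(u)-h(f)\bigr)<h(f)$ for every admissible $s>0$ once $-\infty<h(f)<h(u)$, so your remark that concavity alone gives no upper bound is mistaken and the derivative/integrability step you call the main obstacle is unnecessary (also, for uniform $f$ the choice $\nu=\mu$ gives no strict inequality; use instead that every non-uniform density symmetrizes $u$ and has smaller entropy).
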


In particular, this means that for finite groups, any symmetrization-resistant distributions lie on the boundary of the probability simplex. We identify precisely the set of all symmetrization-resistant distributions on $\mathbb{Z}_3$ and $\mathbb{Z}_4$, the simplest nontrivial examples, and also make several observations about symmetrization resistance in larger finite abelian groups.

We also consider $V$-symmetrization resistance for non-Bernoulli probability measures on $\RL$. Indeed, we show (in Theorem~\ref{thm:V-symm-nonneg} by adapting Pal's method) that any probability measure on $\RL_+$ is $V$-symmetrization resistant with a constant $c$ that depends on its ``variance profile''. For example, we show that the exponential distribution is $V$-symmetrization resistant with constant $0.47$, and that the Poisson  distribution is $V$-symmetrization resistant with constant $0.26$.
When considering these problems, we conjectured that the exponential distribution is both $V$-symmetrization resistant and $h$-symmetrization resistant (with constant 1) in $\calP_c(\RL)$. Recently Jiange Li \cite{Li26} confirmed this conjecture after we communicated it to him.

This paper is organized as follows. 
In \cref{sec:general}, we  make some general observations on the symmetrization problem in Euclidean spaces. 
In \cref{sec:reals} we investigate random variables on the reals. In particular, we investigate the space of symmetrizer PMFs $\calY$ for asymmetric Bernoullis in \cref{sec:Bernoulli_representation}.
In Section~\ref{sec:var-reals}, we give a new proof of $V$-symmetrization resistant of asymmetric Bernoullis, and also show that Pal's method based on stochastic calculus quantifies $c$-symmetrization resistance (in terms of variance) for any probability measure on the positive real line.
In Section~\ref{sec:Bernoulli_entropy}, we show that asymmetric Bernoullis are entropic symmetrization resistant (\cref{thm:HY_exceeds_HX}), and consider both variance and entropic symmetrization resistance in higher dimensions in \cref{sec:hypercube},
showing the effects of coordinate dependencies in the higher dimensional setting. 
In \cref{sec:groups} we make some general observations about symmetrization in compact groups and resolve the symmetrization resistance problem in $\mathbb{Z}_3$ and also show several results in larger finite abelian groups.
Section~\ref{sec:rmk} makes some concluding remarks.

Some results of this article are distilled from the Ph.D. thesis \cite{Pol23:PhD}.

\section[Symmetrization in the reals]{Symmetrization in $\mathbb{R}^d$}\label{sec:reals}

\subsection{General observations}
\label{sec:general}

Certain general observations can be made about the symmetrization problem.
We first fix some notation.

We denote the set of integers from $1$ to $n$ by $[n]$.
For subsets $A$ and $B$ of $\mathbb{R}^d$ and $k \in \mathbb{R}$, we denote the Minkowski sum by $A+B$, and we write $kA = \{ ka \mid a \in A \}$. 
We denote the dimension of a subset $E$ of $\mathbb{R}^d$ by $\dim(E)$ and its affine hull by $\aff(E)$. 
We denote the indicator function of $E$ by $\mathds{1}_E$. 
By a slight abuse of notation we also write $\mathds{1}_{z}$ for $\mathds{1}_{\{z\}}$ when $z \in \mathbb{R}^d$. 

When $X$ is an $U$-valued random variable (r.v.) we write $X \in_R U$. 
The probability mass function (PMF) of a discrete r.v. $Z$ is written $f_Z$ and its support is $\supp(Z)$. 
In the following we will deal exclusively with discrete random variables with finite expectation; as such, we always assume the existence of PMFs. 

We denote the Shannon entropy of a discrete r.v. $Z$ (or, equivalently, of its PMF $f_Z$) by 
\[ H(Z) = H(f_Z) = \mathbb{E} (-\log f_Z) = -\sum_{z \in \supp(Z)} f_Z(z)\log(f_Z(z)),\]
where the logarithm is taken with base $2$.

\begin{definition}
We say that $Z \in_R \mathbb{R}^d$ is \emph{symmetric about zero} if for any measurable $E \subseteq \mathbb{R}^d$, $\mathbb{P}(Z \in E) = \mathbb{P}(Z \in -E)$. 
If $Z - w$ is symmetric about zero (for some fixed $w \in \mathbb{R}^d$) then we say $Z$ is \emph{symmetric about $w$}. 
We say that $Z$ is \emph{symmetric} to mean that it is symmetric about \emph{some} $w$; 
we say that $Z$ is \emph{asymmetric} if it is not symmetric about \emph{any} $w$. 
\end{definition}
For discrete $Z$, symmetry about zero is equivalent to the condition that for all $z \in \supp(Z)$, $f_Z(-z) = f_Z(z)$. 
We call the equations $f_Z(-z) = f_Z(z)$ the \emph{symmetry equations} of $Z$. 

Consider independent $X,Y \in_R \mathbb{R}^d$ such that the sum $X+Y$ is symmetric about zero. 
In this situation we say that $Y$ \emph{symmetrizes} (or is an  \emph{(independent) symmetrizer} of) $X$ (and vice-versa). 

\begin{definition}
Denote the space of symmetrizer distributions of $X \in_R \mathbb{R}^d$ by 
\[ \calY(X) = \calY(f_X) = \{ f_Y \mid Y \text{ is an independent symmetrizer of $X$} \}. \]
When the r.v. $X$ is clear from the context we will simply write $\calY$. 
\end{definition}

It is straightforward that the spaces $\calY$ are convex. 

\begin{definition}[\cite{KMSVV99}]
We say that $X \in_R \mathbb{R}^d$ is \emph{variance symmetrization resistant} with constant $c>0$ when every $f_Y \in \calY(X)$ satisfies $\Var(f_Y) \ge c\Var(f_X)$. 
\end{definition}

We introduce the following analogous notion: 
\begin{definition}
We say that $X \in_R \mathbb{R}^d$ is \emph{entropic symmetrization resistant} with constant $c>0$ when every $f_Y \in \calY(X)$ satisfies $H(f_Y) \ge cH(f_X)$. 
\end{definition}

When $d=1$ we are interested in the case $c=1$; when $c$ is unspecified, we mean $c=1$. 

We state formally the theorem of Kagan, Mallows, Shepp, Vanderbei and Vardi~\cite{KMSVV99}: 
\begin{theorem}
Asymmetric Bernoulli random variables are variance symmetrization resistant. 
\end{theorem}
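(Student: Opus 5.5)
Since $\Var(X+Y)=\Var(X)+\Var(Y)$ for independent $X,Y$, the statement is equivalent to $\Var(X+Y)\ge 2pq$ for every symmetrizer $Y$ of $X\sim\Bern(p)$ with $q=1-p\ne p$. I would attack this through the symmetry equations of $X+Y$, working directly with the PMF $h=f_Y$ (the supports of $X$ and $Y$ are discrete, so we may work with PMFs as the paper does). First I reduce to a single lattice. Because $X$ lives on $\{0,1\}$, the symmetry equations only couple $h$ on a coset $c+\mathbb{Z}$ with $h$ on $-c+\mathbb{Z}$. So $f_Y$ splits as a mixture of symmetrizers supported on unions $(c+\mathbb{Z})\cup(-c+\mathbb{Z})$. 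Note that $\mathbb{E}Y=-p$ for every symmetrizer, since $\mathbb{E}(X+Y)=0$. Hence $\Var(Y)=\mathbb{E}Y^2-p^2$ is affine in $f_Y$ on $\calY$, and it suffices to prove $\mathbb{E}Y^2\ge p$ for each component.

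Next I rewrite the symmetry equations $q h(z)+p h(z-1)=q h(-z)+p h(-z-1)$ in terms of the antisymmetric part of $h$ about $-\tfrac12$, namely $b(z)=h(z)-h(-1-z)$. A direct substitution gives
\[ (q-p)\bigl(h(z)-h(z-1)\bigr) = -q\,b(z-1)-p\,b(z) \qquad\text{for all } z. \]
This is where $p\ne\tfrac12$ enters: the factor $q-p$ is nonzero, so the ``increments'' of $h$ are controlled by its asymmetry $b$. Conversely, applying the reflection $z\mapsto -z$ to this identity yields a second linear relation among the values of $b$. This lets me express $b$ recursively (a two-term recurrence with ratio $-q/p$ or $-p/q$) in terms of finitely many free parameters. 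The paper's later \cref{thm:Bernoulli_calY_representation} says exactly that simple symmetrizers span $\aff(\calY)$, and this is the structure I would exploit: every $f_Y\in\calY$ is an affine combination of explicit ``elementary'' symmetrizer PMFs, for instance $-X'$ with $X'$ an independent copy of $X$, and its translates paired symmetrically.

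With that representation in hand, $\mathbb{E}Y^2-p$ becomes an affine functional of the coefficients, and I would aim to show it is nonnegative on the feasible region cut out by $h\ge0$. The natural route is to write $\mathbb{E}Y^2-p=\sum_z w(z)h(z)$ for an explicit weight $w$, then use the recurrence to move the mass. Concretely, I would sum the displayed identity against $z$ and $z^2$, i.e.\ apply discrete summation by parts, to rewrite $\sum_z z^2h(z)$ as $p$ plus a combination of the quantities $b(z)$. Finally I would bound that combination below using $h\ge0$, i.e.\ $|b(z)|\le h(z)+h(-1-z)$, together with the geometric decay forced by the recurrence.

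The main obstacle is this last positivity step. Any linear identity is automatically compatible with signed ``symmetrizers'', and those can have smaller second moment, so nonnegativity of $h$ must be used in an essential way. What I would try first is to show that the extreme points of $\calY$ restricted to one lattice are finitely supported elementary symmetrizers. On each such symmetrizer $\Var(Y)\ge pq$ can be checked by hand, and the statement then follows by the affinity of $\Var$ on $\calY$. If the extreme-point description fails to be clean, the fallback is Pal's stochastic-calculus argument \cite{Pal08}, quoted as given.
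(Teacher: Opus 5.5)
Your reduction to one lattice at a time, and your instinct to use the affine basis of \cref{thm:Bernoulli_calY_representation}, match the paper. But the proposal stops where the proof begins: the positivity step you call ``the main obstacle'' is the whole content of the theorem, and the tools you propose for it are not available.

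\textbf{The recurrence does not exist.} Reflecting your identity under $z\mapsto -z$ gives no second relation. Since $b(-1-w)=-b(w)$, the reflected identity is algebraically the same as the original. This must be so, because the symmetry equations at $z$ and at $-z$ are one and the same equation. Hence there is no recurrence for $b$, no reduction to finitely many parameters, and no forced geometric decay. On each lattice the symmetrizer space is infinite-dimensional: the basis $\{\hatf_k^r\}_{k\in I^r}$ is infinite. What $h\ge 0$ actually gives are \emph{inequalities} between consecutive coefficients. For example, in the paper's normalization $X\sim\Bern(p,-1,1)$ and the coordinates of \cref{lem:calYzero_representation}, they read $f(2k)=\frac12(q\alpha_k^0+p\alpha_{k+1}^0)\ge 0$ and $f(-2k)=\frac12(p\alpha_k^0+q\alpha_{k+1}^0)\ge 0$. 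This is where the ratios $p/q$ and $q/p$ genuinely enter.

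\textbf{The fallbacks do not close the gap.} You give no description of the extreme points of $\calY^r$, and there would be infinitely many to check. Moreover $\calY$ is not compact, so Krein--Milman does not apply directly; the paper remarks that this approach was off limits on $\mathbb{R}$. Falling back on Pal quotes a proof rather than giving one.

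\textbf{The missing idea is a bound on the mass near the centre.} In your normalization $\mathbb{E}Y^2-p=\mathbb{E}[Y(Y+1)]$. This is automatically nonnegative when $Y$ is integer-valued, which is the paper's lattice $S^1$. On any other lattice the integrand is negative only at the one or two points nearest $-\frac12$, so it suffices to show that these carry mass at most $\frac12$. In the paper's normalization this reads $f(0)=\frac12\alpha_1^0\le\frac12$ on $S^0$, and $f(r)+f(-r)=\frac12(\alpha_0^r+\alpha_1^r)\le\frac12$ on $S^r$ for $0<r<1$.

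It follows from $\sum_k\alpha_k^r=1$ together with the consecutive inequalities. Every nonpositive coefficient has neighbours at least $\frac pq$ times its absolute value (\cref{lem:calY_negative_coefficient_control}). Pairing each negative coefficient with its outer neighbour therefore shows that the non-central coefficients have nonnegative sum. Off the central points, bound $z^2$ below by its next-smallest value on $S^r$; this gives \cref{lem:calYr_min_M2s}. Affinity of $\Var$ over the lattices then finishes the proof.

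\textbf{Why the pointwise bound is not enough.} The bound $\abs{b(z)}\le h(z)+h(-1-z)$ applied pointwise cannot substitute for this step. On the half-integer lattice your identity gives $h(-\frac12)=\frac{1}{q-p}\sum_{z\in\{\frac12,\frac32,\dots\}}b(z)$. The pointwise bound then only yields $h(-\frac12)\le\frac{1}{1+\abs{p-q}}$. That is too weak to force $\mathbb{E}[Y(Y+1)]\ge 0$ once $\abs{p-q}<\frac13$. The constraints at neighbouring points have to be coupled, which is exactly what the coefficient inequalities do.
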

We re-prove this result in \cref{sec:Bernoulli_variance} as \cref{thm:VarY_exceeds_VarX}.

\begin{remark}
If a nontrivial $X \in_R\mathbb{R}^d$ is symmetric about any $w \in \mathbb{R}^d$, then $\mathbb{E}(X-w) = 0$ and $w = \mathbb{E}X$; 
this shows that $\mathds{1}_{-w} \in \calY(X)$. 
Trivially, $\Var(\mathds{1}_{-w}) = H(\mathds{1}_{-w}) = 0$; therefore, symmetric $X$ are neither variance nor entropic symmetrization resistant with any constant $c>0$. 
\end{remark}

The following lemma, the proof of which is straightforward, introduces a class of symmetrizer PMFs that will be instrumental in our investigation. %proving \cref{thm:Bernoulli_calY_representation}. 
\begin{lemma}\label[lemma]{lem:hatf_are_symmetrizers}
Let $f$ be a PMF and let $z \in \mathbb{R}$. 
Then
\[
\hatf_z \coloneqq \frac{1}{2} \left( \mathds{1}_{-z} \convolve f_- + \mathds{1}_{z} \convolve f_- \right) \in \calY(f)
\]
where $f_-$ denotes the reversed PMF $f_-(w) = f(-w)$. 
\end{lemma}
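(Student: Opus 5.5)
We must check two things: that $\hatf_z$ is a PMF, and that if $X\sim f$ and $Y\sim\hatf_z$ are independent then $X+Y$ is symmetric about zero, i.e.\ $f\convolve\hatf_z$ is invariant under reflection $w\mapsto -w$.

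That $\hatf_z$ is a PMF is immediate. Each $\mathds{1}_{\pm z}\convolve f_-$ is the PMF of $\pm z - X'$ with $X'\sim f$. These are translates of a PMF, so they are nonnegative and sum to one, and $\hatf_z$ is their average. Finiteness of expectation is also inherited from $f$, consistent with the standing assumption.

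For the symmetry, use that convolution of PMFs is commutative and associative and that reflection is a homomorphism: $(g\convolve k)_- = g_-\convolve k_-$, and $(\mathds{1}_z)_- = \mathds{1}_{-z}$. Distributing the convolution over the sum gives
\[
f\convolve \hatf_z = \tfrac12\left(\mathds{1}_{-z}\convolve (f\convolve f_-) + \mathds{1}_{z}\convolve (f\convolve f_-)\right) = \tfrac12\,(\mathds{1}_{-z}+\mathds{1}_z)\convolve g ,
\]
where $g = f\convolve f_-$. The function $g$ is the PMF of $X-X'$ with $X,X'$ i.i.d., so $g_- = f_-\convolve f = g$ is symmetric. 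The factor $\tfrac12(\mathds{1}_{-z}+\mathds{1}_z)$ is also symmetric. A convolution of two symmetric PMFs is symmetric, since $(a\convolve b)_- = a_-\convolve b_- = a\convolve b$. Hence $f\convolve\hatf_z$ is symmetric about zero and $\hatf_z\in\calY(f)$.

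In random-variable language, $Y = \epsilon z - X'$ with $\epsilon$ a fair sign and $\epsilon$, $X'$, $X$ independent. Then $X+Y = (X-X') + \epsilon z$ is a sum of two independent symmetric variables. There is no real obstacle in this argument. The only point to state carefully is that reflection commutes with convolution, which follows directly from the definition $(a\convolve b)(w)=\sum_u a(u)b(w-u)$ by the substitution $u\mapsto -u$.
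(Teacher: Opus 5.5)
Your proof is correct. The paper omits this proof as straightforward, and your factorization $f\convolve\hatf_z=\frac{1}{2}(\mathds{1}_{-z}+\mathds{1}_z)\convolve(f\convolve f_-)$ into two symmetric factors is the natural argument; it is the same device the paper uses in the proof of \cref{lem:symmetric_components}, where $f\convolve h_-=g\convolve(h\convolve h_-)$ is shown to be symmetric.
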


In addition to convexity, the spaces $\calY$ exhibit a second useful closure property:
\begin{lemma}[Closure under subtraction]\label[lemma]{lem:closure_under_subtraction}
Let $f,g \in \calY$ with $\supp(g) \subseteq \supp(f)$. 
Let $C = \inf_{z \in \supp(g)} \frac{f(z)}{g(z)}$ and let $c \in [0,C]$. %we don't require $C > 0$. 
Then $h = \frac{f-cg}{1-c} \in \calY$. 

Moreover, $\supp(h) \subseteq \supp(f)$, and if $\abs{\supp(f)} < \infty$ then the inclusion is proper if and only if $c = C$. 
\begin{proof}
Let $f,g \in \calY$. 
First note that for $z \in \supp(g)$, $cg(z) \le Cg(z) \le f(z)$, so $f - cg \ge 0$ by construction. 
Also, $1-c = \sum_{z \in \supp(f)} (f(z) - cg(z))$, so $h$ is a PMF with $\supp(h) \subseteq \supp(f)$. 

Observe that for every $z \in \mathbb{R}$, both $\varphi = f$ and $\varphi = g$ solve the symmetry equation
\[ 
%q \varphi(-z+1) + p\varphi(-z-1) = 
(\phi \convolve f_X)(-z) = (\phi \convolve f_X)(z) 
%= q\varphi(z+1) + p \varphi(z-1) .
\]
%\begin{align*}
%q\varphi(2z+2) + p\varphi(2z) &= q\varphi(-2z) + p\varphi(-2z-2) 
%\shortintertext{and}
%q\varphi(2z) + p\varphi(2z-2) &= q\varphi(-2z+2) + p\varphi(-2z) .
%\end{align*}
This equation is linear in $\varphi$, so $\varphi = h$ is also a solution. % and $h \in \calY$. 
\end{proof}
\end{lemma}

The following easy observation will be useful.
\begin{lemma}[Affine invariance]\label[lemma]{lem:affine_invariance}
Let $X \in_R \mathbb{R}^d$ be discrete, 
let $f_Y \in \calY(X)$, 
let $A$ be a linear bijection on $\mathbb{R}^d$
and let $b \in \mathbb{R}^d$. 
Then $f_{AY - b} \in \calY(AX + b)$. 

Moreover, for any fixed $c>0$, $H(AY-b) \ge cH(AX + b)$ iff $H(Y) \ge cH(X)$, 
and for $d=1$ and nonzero $a \in \mathbb{R}$, $\Var(aY-b) \ge c\Var(aX+b)$ iff $\Var(Y) \ge c\Var(X)$. 

\begin{proof}
For the first claim it is sufficient to check the symmetry equations, 
and indeed
\[ 
f_{AX+b + AY-b}(z)
%= f_{A(X+Y)}(z)
= f_{X+Y}(A^{-1} z)
= f_{X+Y}(-A^{-1} z)
%= f_{A(X+Y)}(-z)
= f_{AX+b + AY-b}(-z). 
\]

Also, $H(AY-b) = H(Y)$ and $H(AX+b) = H(X)$. 
Finally, $\Var(aY-b)= a^2\Var(Y)$ and $\Var(aX+b) = a^2\Var(X)$. 
\end{proof}
\end{lemma}

The following special case is of particular utility. 
\begin{corollary}[Symmetry about zero]\label[corollary]{cor:psymmetry}
If $f_Y \in \calY(X)$, then $f_{-Y} \in \calY(-X)$.
\end{corollary}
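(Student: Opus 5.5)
The plan is to obtain this as the special case of \cref{lem:affine_invariance} with $A=-I$ (the linear bijection $z\mapsto -z$ on $\mathbb{R}^d$) and $b=0$. That lemma, applied to a discrete $X$ and $f_Y\in\calY(X)$, gives $f_{AY-b}\in\calY(AX+b)$. With this choice of $A$ and $b$ the conclusion reads $f_{-Y}\in\calY(-X)$, which is exactly the claim.

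For a self-contained check, one can verify the symmetry equations directly. Independence of $-X$ and $-Y$ follows from independence of $X$ and $Y$. Moreover $(-X)+(-Y)=-(X+Y)$, so for every $z$,
\[ f_{-X-Y}(z)=f_{X+Y}(-z)=f_{X+Y}(z)=f_{-X-Y}(-z), \]
where the middle equality is the symmetry of $X+Y$ about zero. Hence $-X-Y$ is symmetric about zero, and $-Y$ is an independent symmetrizer of $-X$.

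There is no real obstacle here. The only point to watch is that \cref{lem:affine_invariance} is stated for discrete $X$, which matches the standing assumption of this section. In the non-discrete case the same computation goes through with $\mathbb{P}(-(X+Y)\in E)=\mathbb{P}(X+Y\in -E)=\mathbb{P}(X+Y\in E)=\mathbb{P}(-(X+Y)\in -E)$ for measurable $E$.
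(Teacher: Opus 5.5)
Your proposal is correct and matches the paper, which presents this corollary simply as the special case $A=-I$, $b=0$ of \cref{lem:affine_invariance}. Your direct check of the symmetry equations, $f_{-X-Y}(z)=f_{X+Y}(-z)=f_{X+Y}(z)=f_{-X-Y}(-z)$, is a correct self-contained verification of the same fact.
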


For appropriately scaled and centered random variables, the following notation will be convenient. 
\begin{definition}\label[definition]{def:Sr}
For $r \in [0,1]$, write
$ S^r = 2\mathbb{Z} + \{\pm r\}, $
and
$ \calY^r 		= \{ f \in \calY 			\mid \supp(f) \subseteq S^r \} .$
\end{definition}
Note that the sets $S^r$ partition the real line. 
The spaces $\calY^r$ will turn out to partition a superset of the set of extreme points of $\calY$.

\subsection{Symmetric components}
\begin{definition}[\cite{KMSVV99}]
A PMF $f$ has a \emph{symmetric component} if there exist PMFs $g,h$ such that $g$ is symmetric, $\abs{\supp(g)} \ge 2$, and $f = g \convolve h$. 
\end{definition}

Demonstrating the existence of a symmetric component is a useful for showing that a random variable is \emph{not} symmetrization resistant. 
The following lemma is due to \cite{KMSVV99}; we extend it to the case of entropy. 
\begin{lemma}\label[lemma]{lem:symmetric_components}
If a PMF $f$ has a symmetric component, then it is neither variance symmetrization resistant nor entropic symmetrization resistant. 

\begin{proof}
Let $f = g \convolve h$ satisfy the assumption, where $g$ is symmetric and nondegenerate ($\abs{\supp(g)} \ge 2$). 
For a PMF $\varphi$ define $\varphi_-$ by $\varphi_-(z) = \varphi(-z)$ everywhere. 
Note that $h \convolve h_-$ is a symmetric PMF. 
Thus $f\convolve h_- = g \convolve (h \convolve h_-)$ is symmetric (the convolution of two symmetric PMFs is symmetric) 
and $h_- \in \calY(f)$. 
Since $g$ is nondegenerate, $h \ne f$ and $h_- \ne f_-$. 
Moreover, $\Var(h_-) = \Var(h)$ and $H(h_-) = H(h)$. 
Therefore we need only show that $\Var(h) \le \Var(f)$ and $H(h) \le H(f)$. 

%For the first result, we simply note that we may construct 
For variance, taking independent $X \sim g$ and $Y \sim h$, we have $X+Y \sim f$ and
\[ \Var(f) = \Var(X+Y) = \Var(X) + \Var(Y) = \Var(g) + \Var(h) > \Var(h) . \] 
Noting that $h$ cannot be periodic, %(else it would integrate to $\infty$)
we apply strict concavity and translation-invariance of entropy: 
\[ 
H(f) 
= H(g \convolve h) 
= H\bigg(\sum_{w \in \supp(g)} g(w)h(\cdot-w)\bigg) 
> \sum_{w \in \supp(g)} g(w) H(h(\cdot-w))
%= \sum_w g(w) H(h) 
= H(h). \qedhere
\] 
\end{proof}
\end{lemma}

\begin{remark}
An obvious question is whether asymmetry of $X$ must force $H(Y) \ge cH(X)$ or $\Var(Y) \ge c\Var(X)$ for some universal constant $c$. 
The above lemma shows that this is not the case. 
%Indeed, if we allow $X$ to have an asymmetric component, $H(Y)$ may be made to be arbitrarily small relative to $H(X)$, and $\Var(Y)$ may be made arbitrarily small relative to $\Var(X)$.

%To see this, 
Indeed, fix $c>0$ and let $U \sim \Bernoulli(p,\pm1)$ (with $p \ne \frac{1}{2}$) and $V \sim \Uniform([-2^{n},2^n] \cap \mathbb{Z} \setminus\{0\})$, be independent. 
Defining $X = U+V$, we have $f_{-U} \in \calY(X)$. 
Taking $n > 2$ to be large enough that $\frac{2}{n+1} < c$, 
\[ H(f_{-U}) < 1 = \frac{1}{n+1}H(V) < cH(V) < cH(X), \]
and
\[ \Var(f_{-U}) = 4p(1-p) < 2 < \frac{2}{n+1}\Var(V) < \frac{2}{n+1} \Var(X) < c\Var(X) ,\]
where in the latter computation we used the crude estimate 
\[ \Var(V) 
= \mathbb{E}V^2 
= 2\sum_{i \in [2^n]} \frac{i^2}{2^{n+1}}
= \frac{1}{2^n} \sum_{i \in [2^n]} i^2 
> 2^n
> n+1 .
\]
\end{remark}

\begin{remark}
One may also ask whether, given distributions $f_1, f_2, \dots$ which are already symmetrization resistant, how one may combine them to produce new symmetrization resistant distributions. 
Apart from tensorization with appropriate constraints (\cref{sec:hypercube}), we do not know of any way to do this. 
For example, though asymmetric Bernoullis are ($H-$ and $V$-) symmetrization resistant, binomials in general appear not to be (\cite{KMSVV99,Pol23:PhD}); thus convolution does not work. 
Similarly, the pointwise limit of symmetrization-resistant densities might not be symmetrization resistant.
For example, if $f_n \sim \Bernoulli(\frac{1}{2}+\frac{1}{n})$ we recover $f_n \rightarrow f \sim \Bernoulli(\frac{1}{2})$. 
Consider also that this example be used to construct less obvious examples.
For example, take $f_n$ as above and $g_n \rightarrow g$ (with $g_n$ and $g$ nontrivial).  
Then, even if $f_n \convolve g_n \rightarrow f \convolve g$ and $f_n \convolve g_n$ is symmetrization resistant for each $n$, the limit $f \convolve g$ is not symmetrization resistant by \cref{lem:symmetric_components}. 
\end{remark}

\subsection[An affine basis spanning the symmetrizer space for asymmetric Bernoulli]{An affine basis spanning the symmetrizer space for asymmetric Bernoullis}
\label[subsection]{sec:Bernoulli_representation}

In this subsection, we first provide 
a basis for $\aff(\calY^0)$, and then expand it to all of $\calY$.  
%in \cref{lem:calYzero_representation}

\begin{definition}
For integers $k \ge 1$, we denote 
\[ \hatf_k^0 = \hatf_{2k-1} = \frac{1}{2}\left(\mathds{1}_{-2k+1} \convolve f_{-X} + \mathds{1}_{2k-1}\convolve f_{-X}\right) . \]
\end{definition}

\begin{lemma}\label[lemma]{lem:calYzero_representation}
The set $\{ \hatf_i^0 \mid i \in \mathbb{N} = \{1, 2, \dots \}$ is a basis for $\aff(\calY^0)$. 
Moreover, the coefficients in an expansion $f = \sum_{i=1}^{\infty} \alpha_i^0 \hatf_i^0$ with respect to this basis
are given by the formula
$\alpha_i^0 = \frac{2(p f(-2i) - q f(2i))}{p-q}.$

\begin{proof}
Fix $f \in \calY^0$. 
Note that for all $z \not\in S^0 = 2\Z$ and positive integers $i$, $f(z) = \hatf_i^0(z) = 0$.

Solving the symmetry equation $pf(0) + qf(2) = pf(-2) + qf(0)$ for $f(0)$, we obtain
\(
f(0) = \frac{pf(-2) - qf(2)}{p-q},
\)
so that defining 
\(
\alpha_1^0= 2f(0) 
\)
results in 
\( f(0) 
= \alpha_1^0 \hatf_1^0(0) = \sum_{k \ge 1} \alpha_k^0 \hatf_k^0(0)\). 
Since 
$f(z)=0$ for all other $\abs{z} < 2$, this is the unique value of $\alpha_1^0$ satisfying the formula $f(z) = \sum_{k \ge 1} \alpha_k^0 \hatf_k^0(z)$ when $\abs{z} < 2$. 

We proceed by induction on $k$. 

Our inductive hypothesis is that 
\(
\alpha_i^0 = \frac{2(pf(-2i) - qf(2i))}{p-q}
\)
(where $i \in [k]$) is the unique solution to $f(z) = \sum_{i=1}^k \alpha_i^0 \hatf_i^0(z)$ for $\abs{z} < 2k$. 
We compute
\begin{align*}
f(2k) = \sum_{i \ge 1} \alpha_i^0 \hatf_i^0(2k) 
&= \alpha_{k}^0 \hatf_{k}^0(2k) + \alpha_{k+1}^0\hatf_{k+1}^0(2k) \\
&= \left(\frac{2(pf(-2k) - qf(2k))}{p-q}\right)\left(\frac{q}{2}\right) + \alpha_{k+1}^0 \frac{p}{2} 
\shortintertext{thus}
\alpha_{k+1}^0 &= \left(\frac{2}{p}\right)\left(f(2k) - \left(\frac{pqf(-2k) - q^2f(2k)}{p-q}\right)\right) \\
&= \frac{2((p-q+q^2)f(2k) - pqf(-2k))}{p(p-q)} \\
%&= \frac{2(p^2f(2k) - pqf(-2k))}{p(p-q)} \\
&= \frac{2(pf(2k) -qf(-2k))}{p-q} .
\end{align*}
where we have used the identity $p - q +q^2 %= p-(1-p)+(1-p)^2 
= p^2$. % in the last equality. 
%Then,}
%\alpha_{k+1}^0 &= \frac{2(pf(2k) -qf(-2k))}{p-q} .
%\intertext{
Note that the symmetry equation 
\[ pf(2k) +qf(2k+2) = pf(-2k-2)+qf(-2k) \]
may be arranged as 
\[pf(2k) - qf(-2k) = pf(-2(k+1)) - qf(2(k+1)) \]whence the desired formula %for $\alpha_{k+1}_0$. 
\(
\alpha_{k+1}^0 = \frac{2(pf(-2(k+1)) - qf(2(k+1)))}{p-q} 
\).
A similar argument shows that the same value of $\alpha_{k+1}^0$ is the unique solution to $f(-2k) = \sum_{i\ge1} \alpha_i^0 \hatf_i^0(-2k)$. 
Since $f(z) = \hatf_i^0(z) = 0$ for all $\abs{z} \in (2k,2(k+1))$,
%(and every $i$), we see that the coefficients $\alpha_j^0$ ($j \in [k+1]$) given according to the above formula constitute the unique solution to $f(u) = \sum_{i=0}^{k+1} \alpha_j^0 \hatf_j^0(u)$ when $\abs{u} < 2(k+1)$. 
%Conversely, every solution $f(z)$ to the symmetry equations with $\abs{z} < 2(k+1)$ is of this form. 
this completes the induction step; 
%
%Now let $z \in \mathbb{R}$ and and choose a positive integer $k$ such that $\abs{z} < 2k$. %e.g. $k = \ceil{\frac{1}{2}\abs{z}} + 1$. 
%Then %by the definition of the $\hatf_i$, 
%$\sum_{i \ge 1} \alpha_i^0 \hatf_i^0 (z) = \sum_{i=1}^k \alpha_i^0 \hatf_i^0(z) = f(z)$, so 
consequently $f = \sum_{k \ge 1} \alpha_i^0 \hatf_i^0$ everywhere on $\mathbb{R}$. 
Finally, note that 
\[
\sum_{i=1}^{\infty} \alpha_i^0 
= \sum_{i=1}^{\infty} \alpha_i^0 \sum_{k \in \mathbb{Z}} \hatf_i^0(2k) 
= \sum_{k \in \mathbb{Z}} \sum_{i=1}^{\infty} \alpha_i^0 \hatf_i^0(2k) 
= \sum_{k \in \mathbb{Z}} f(2k) = 1. \qedhere
\]
\end{proof}
\end{lemma}

\begin{definition}\label[definition]{def:Bern_Ir}
For $r \in [0,1]$ we define the \emph{index set $I^r$} to be 
\[ 
I^r = \begin{cases}
    \mathbb{N} = \{1, 2, \dots \} & \text{ when } r = 0 \\
    \mathbb{Z} & \text{ when } r \in (0,1) \\
    \mathbb{N}_0 = \{0, 1, 2, \dots \} & \text{ when } r = 1 .
    \end{cases}
\]
\end{definition}
 
\begin{definition}\label[definition]{def:Bern_hatfkr}
For $r \in [0,1]$ and $k \in I^r$ we define functions $\hatf_k^r$ by
\[ \hatf_k^r = \hatf_{2k-1+r} = \frac{1}{2}\left(\mathds{1}_{-2k+1-r}\convolve f_{-X} + \mathds{1}_{2k-1+r}\convolve f_{-X}\right). \]
\end{definition}

\begin{lemma}\label{lem:Bernoulli_calYr_representation}
For every $r \in [0,1]$, the set $\{\hatf_k^r \mid k \in I^r \}$ is a basis for $\aff(\calY^r)$.

That is, for all $r \in [0,1]$ and $f \in \calY^r$, there exist constants $\alpha_k^r = \alpha_k^r[f]$ such that 
\[ f = \sum_{k \in I^r} \alpha_k^r \hatf_k^r. \]
where $\alpha_0^1[f] = \frac{pf(-1)-qf(1)}{p-q}$ and
$ \alpha_k^r[f] = \frac{2(pf(-2k-r) - qf(2k+r))}{p-q}$
for all other $k,r$. 
\end{lemma}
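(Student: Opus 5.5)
The plan is to follow the proof of \cref{lem:calYzero_representation}: define the coefficients $\alpha_k^r$ by the stated formulas and check pointwise that $\sum_k \alpha_k^r \hatf_k^r = f$ on $S^r$. The check uses only the symmetry equations of $f\convolve f_X$ and the identity $p-q+q^2=p^2$.

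\textbf{Setup.} Here $f_{-X}$ puts mass $p$ at $-1$ and $q$ at $+1$. Hence $\hatf_k^r$, with $z=2k-1+r$, has masses
\[
\tfrac p2 \text{ at } 2k-2+r,\quad \tfrac q2 \text{ at } 2k+r,\quad \tfrac p2 \text{ at } -2k-r,\quad \tfrac q2 \text{ at } -2k+2-r .
\]
So $\hatf_k^r$ is supported in $S^r$, and it lies in $\calY(f_X)$ by \cref{lem:hatf_are_symmetrizers}. For $f\in\calY^r$, the symmetry equation of $f\convolve f_X$ at the point $s=2k+r+1$ reads
\[
pf(2k+r)+qf(2k+r+2)=pf(-2k-r-2)+qf(-2k-r).
\]
Rearranged, this gives $pf(2k+r)-qf(-2k-r)=pf(-2(k+1)-r)-qf(2(k+1)+r)$, exactly as in the $r=0$ case.

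\textbf{Case $r\in(0,1)$.} The points $2j+r$ and $2j-r$ are all distinct. From the list of masses, the point $2k+r$ receives mass only from indices $k$ and $k+1$, namely $\tfrac q2\alpha_k+\tfrac p2\alpha_{k+1}$. The point $-2k-r$ receives $\tfrac p2\alpha_k+\tfrac q2\alpha_{k+1}$. These sums are finite at each point, so there are no convergence issues.
\begin{itemize}
\item Substituting $\alpha_k=\frac{2(pf(-2k-r)-qf(2k+r))}{p-q}$ and rewriting $\alpha_{k+1}$ via the rearranged symmetry equation, the same computation as in \cref{lem:calYzero_representation} reduces both expressions to $f(2k+r)$ and $f(-2k-r)$ respectively.
\item Since every $k\in\Z$ occurs, there is no boundary term. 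This is why $I^r=\Z$.
\end{itemize}

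\textbf{Uniqueness and affine hull.} The pair of equations at $2k+r$ and $-2k-r$ forms a $2\times2$ system in $(\alpha_k,\alpha_{k+1})$ with determinant $(q^2-p^2)/4\neq0$. Any representation is therefore forced: knowing one coefficient determines the rest through the chain. This gives linear independence, so the $\hatf_k^r$ form a basis. That $\sum_k\alpha_k^r=1$ follows by summing over $S^r$ and exchanging sums, as at the end of the proof of \cref{lem:calYzero_representation}. Exchanging the sums is justified because $\sum_k\abs{\alpha_k^r}\le \frac{4}{\abs{p-q}}\sum\abs f<\infty$.

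\textbf{Case $r=1$.} Now $S^1$ is the set of odd integers, and $-2k-1$ coincides with $2(-k-1)+1$. Hence $\hatf_k^1$ and $\hatf_{-k}^1$ coincide, which is why the index set is $I^1=\mathbb{N}_0$. The chain starts at $\hatf_0^1=f_{-X}$ itself: at the point $\pm1$ only index $0$ contributes with full weight $p$ or $q$, together with index $1$. This yields $\alpha_0^1=\frac{pf(-1)-qf(1)}{p-q}$, without the factor $2$; one solves it from the symmetry equation at $0$, namely $pf(-1)+qf(1)=pf(-1)+qf(1)$ combined with the equations at $\pm2$. After this, one inducts outward exactly as for $r=0$.

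\textbf{Main obstacle.} I expect the main difficulty to be bookkeeping. One must verify that in the $r=1$ case the base step produces exactly the undoubled coefficient, and that the $r\in(0,1)$ bi-infinite chain is consistent in both directions. For the latter, I would first check directly that the formula-defined $\alpha_k$ satisfy both point equations, rather than inducting from a nonexistent centre.
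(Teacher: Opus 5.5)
Your proposal is correct and follows essentially the paper's route. You define the $\alpha_k^r$ by the stated formulas and verify the expansion pointwise on $S^r$ via the rearranged symmetry equation, using that only two terms contribute at each point, and you treat the points $\pm1$ separately when $r=1$. Your uniqueness argument via the $2\times 2$ determinants and the absolute-summability justification for $\sum_k \alpha_k^r = 1$ are welcome additions the paper omits.

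One small correction: the symmetry equation at $0$ is a tautology and contributes nothing. So $\alpha_0^1$ is determined by the $2\times 2$ system at the points $\pm1$ alone, and the symmetry equation at $s=\pm 2$ is what makes the resulting $\alpha_1^1$ agree with its stated formula.
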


%When appealing to this lemma or \cref{thm:Bernoulli_calY_representation} and 
When there is no danger of confusion we will write $\alpha_k^r$ instead of $\alpha_k^r[f]$. 
The $\alpha_k^0$ are the same as in \cref{lem:calYzero_representation}.

\begin{proof}
The verification is straightforward but tedious. 
The statement for $r=0$ is exactly \cref{lem:calYzero_representation}. Thus, we need only check $r \in (0,1]$. 
First let $r \in (0,1)$ and fix some $z \in \mathbb{R}$. 
If $z \not\in S^r$ then $f(z) = \hatf_k^r(z) = 0$ for all $k \in \mathbb{Z}$. 
Therefore it is sufficient to verify the statement when $z \in S^r$. 
Assume first that $z = 2n+r$ for some $n \in \mathbb{Z}$. 
We have 
\begin{align}
\sum_{k \in \mathbb{Z}} \alpha_k^r \hatf_k^r(2n+r) 
&= \alpha_n^r \hatf_n^r(2n+r) + \alpha_{n+1}^r \hatf_{n+1}^r(2n+r) \nonumber \\
%&= \left(\frac{2}{p-q}\right) \bigg((pf(-2n-r)-qf(2n+r)) \Big(\frac{q}{2}\Big) \nonumber \\
%&\quad +\> (pf(-2n-2-r)-qf(2n+2+r)) \Big(\frac{p}{2}\Big)\bigg) \nonumber \\
&= \left(\frac{2}{p-q}\right) \bigg(\frac{q}{2}(pf(-2n-r)-qf(2n+r)) \nonumber \\
&\qquad \qquad \qquad \quad +\> \frac{p}{2}(pf(-2n-2-r)-qf(2n+2+r))\bigg) \nonumber \\
%&= \left(\frac{1}{p-q}\right) \bigg(p(qf(-2n-r) + pf(-2n-2-r)) \nonumber \\
%&\quad -\> q^2f(2n+r) -pq f(2n+2+r) \bigg) \nonumber \\
&= \left(\frac{1}{p-q}\right) \bigg(p(qf(2n+2+r) + pf(+2n+r)) \nonumber \\
&\qquad \qquad \qquad \quad - q^2f(2n+r) -pq f(2n+2+r) \bigg) \label{eq:calY_repn_intermediate}\\
&= \left(\frac{1}{p-q}\right) (p^2-q^2)f(2n+r) %\nonumber \\
%= f(2n+r) 
= f(z) \nonumber 
\end{align}
where we have used the the symmetry equation 
\[ qf(-2n-r) + pf(-2n-2-r) = pf(2n+r) + qf(2n+2+r) \] 
in \eqref{eq:calY_repn_intermediate}. 
The computation for $z=2n-r$ is almost identical (the nonzero expansion terms are those with $\hatf_{-n}$ and $\hatf_{-n+1}$ instead of $\hatf_n$ and $\hatf_{n+1}$). 
%We include the computation here for completeness. 
%Let $z = 2n-r$ for some $n \in \mathbb{Z}$.
%We have 
%\begin{align*}
%\sum_{k \in \mathbb{Z}} \alpha_k^r \hatf_k^r(2n-r)
%&= \alpha_{-n}^r \hatf_{-n}^r(2n-r) + \alpha_{-n+1}^r \hatf_{-n+1}^r(2n-r) \\
%&= \left(\frac{2}{p-q}\right) \bigg((pf(2n-r)-qf(-2n+r)) \frac{p}{2} \\
%&\quad +\> (pf(2n-2-r)-qf(-2n+2+r)) \frac{q}{2}\bigg) \\
%&= \left(\frac{1}{p-q}\right) \bigg(p^2f(2n-r) - pqf(-2n+r) \\
%&\quad +\> pq(2n-2-r) -q^2 f(-2n+2+r) \bigg) \\
%&= \left(\frac{1}{p-q}\right) \bigg(p^2f(2n-r) + pq(2n-2-r) \\
%&\quad -\> q(pf(-2n+r) + qf(-2n+2+r)) \bigg) \\
%%&= \left(\frac{1}{p-q}\right) \left(p^2f(2n-r) + pq(2n-2-r) - q(f_{X+Y}(-2n+1+r)) \right) \\
%%&= \left(\frac{1}{p-q}\right) \left(p^2f(2n-r) + pq(2n-2-r) - q(f_{X+Y}(2n-1-r)) \right) \\
%&= \left(\frac{1}{p-q}\right) \bigg(p^2f(2n-r) + pq(2n-2-r) \\
%&\quad -\> q(qf(2n-r) + pf(-2n-2-r)) \bigg) \\
%&= \left(\frac{1}{p-q}\right) (p^2-q^2)f(2n-r) \\
%&= f(2n-r) = f(z) 
%\end{align*}
%Thus for any $r\in(0,1)$ and $z \in \mathbb{R}$, $f(z) = \sum_{n \in \mathbb{Z}} \alpha_n^0 \hatf^r_n(z)$.

It remains only to show the expansion $f = \sum_{k \ge 0} \alpha_k^r \hatf_k^r$ for $r=1$. 
Again we need only consider $z \in S^r = S^1 = 2\mathbb{Z}+1$; therefore we write $z=2n+1$. 
The computation is the same as above when $n \ge 1$ due to the definition of the coefficients $\alpha_k^r$. 
For $n \le -2$, we compute
\begin{align*}
\sum_{k \ge 0} \alpha_k^1 \hatf_k^1(z) 
&= \sum_{k \ge 0} \alpha_k^1 \hatf_k^1(2n+1) \\
&= \alpha_{-n-1}^1 \hatf_{-n-1}^1(2n+1) + \alpha_{-n}^1 \hatf_{-n}^1(2n+1) \\
&= \left(\frac{2}{p-q}\right)\bigg( \frac{p}{2}(pf(2n+1)-qf(-2n-1)) %\\
+ \frac{q}{2}(pf(2n-1)-qf(-2n+1))\bigg) \\
%&= \left(\frac{1}{p-q}\right)\left( (pf(2n+1)-qf(-2n-1))p + (pf(2n-1)-qf(-2n+1))q\right) \\
&= \left(\frac{1}{p-q}\right)\left( p^2f(2n+1) + pqf(2n-1) -q(pf(2n-1) +qf(2n+1)) \right) \\
&= \left(\frac{1}{p-q}\right)\left( (p^2-q^2)f(2n+1) \right) %\\
%&= f(2n+1) 
= f(z) . 
\end{align*}
For $n=0$ (i.e. $z=1$) we compute
\begin{align*}
\sum_{k \ge 0} \alpha_k^1 \hatf_k^1(z)
&= \alpha_0^1 \hatf_0^1(1) + \alpha_1^1 \hatf_1^1(1) \\
&= \left(\frac{1}{p-q}\right)(pf(-1)-qf(1))q + \left(\frac{2}{p-q}\right)(pf(-3)-qf(3))\frac{p}{2} \\
&= \left(\frac{1}{p-q}\right)(pqf(-1)-q^2f(1) + p^2f(-3) -pqf(3)) \\
&= \left(\frac{1}{p-q}\right)(p^2f(1)-q^2f(1) + pqf(3) -pqf(3)) %\\
%= f(1) 
= f(z). 
\end{align*}
Similarly, when $n=-1$ (i.e. $z=-1$) we compute
\begin{align*}
\sum_{k \ge 0} \alpha_k^1 \hatf_k^1(z)
&= \alpha_0^1 \hatf_0^1(-1) + \alpha_1^1 \hatf_1^1(-1) \\
&= \left(\frac{1}{p-q}\right)(pf(-1)-qf(1))p + \left(\frac{2}{p-q}\right)(pf(-3)-qf(3))\frac{q}{2} \\
%&= \left(\frac{1}{p-q}\right)(p^2f(-1)-pqf(1) + pqf(-3) -q^2f(3)) \\
&= \left(\frac{1}{p-q}\right)(p^2f(-1)-q^2f(-1) + pqf(-3) -pqf(-3)) %\\
%&= f(-1) 
= f(z). \qedhere
\end{align*}
%Thus we have established $f(z) = \sum_{k\ge0} \alpha_k^1 \hatf_k^1(z)$ for all $z\in\mathbb{R}$. 
\end{proof}

\begin{theorem}\label[theorem]{thm:Bernoulli_calY_representation}
The set $\{\hatf_k^r \mid r \in [0,1] \text{ and } k \in I^r \}$ is a basis for $\aff(\calY(f))$. 

That is, for $f \in \calY$, denoting 
%the set of all indices $r$ such that the projection of $f$ onto $S^r$ is nonzero by 
$ R_f = \{ r \in [0,1] \mid \supp(f) \cap S^r \ne \emptyset \}, $ we have
\[ f = \sum_{r \in R_f} \sum_{k \in I^r} \alpha_k^r \hatf_k^r , \qquad \text{and} \qquad \sum_{r \in R_f} \sum_{k \in I^r} \alpha_k^r = 1 . \]
\begin{proof}
The PMF $f$ is discrete, so its support is at most countable; therefore $R_f$ is at most countable. 
Write $f = \sum_{R_f} f\big\vert_{S^r}$ and for each $r \in R_f$ define $c_r = \sum_{z \in S^r} f(z)$. 
Then for each $r \in R_f$, $f_r = \frac{1}{c_r} f\big\vert_{S^r} \in \calY$ by \cref{lem:Sr_restriction_of_symmetrizers_produces_symmetrizers}. 
Now apply the \cref{lem:Bernoulli_calYr_representation} to each $f_r$ to find constants $\alpha_k^r[f_r]$; it is then obvious that $c_r \alpha_k^r[f_r] = \alpha_k^r[f]$ for all $k$ and $r$. 
%; then, for $(k,r) \ne (0,1)$,
%\begin{align*}
%\[ c_r \alpha_k^r[f_r]
%= c_r \frac{2}{p-q}(pf_r(-2k-r) - qf_r(2k+r)) \\
%= \frac{2}{p-q}(pc_rf_r(-2k-r) - qc_rf_r(2k+r))
%= \frac{2}{p-q}(pf(-2k-r) - qf(2k+r))
%= \alpha_k^r[f] \]
%\end{align*}
%and similarly $c_r\alpha_1^0[f_r] = \alpha_1^0[f]$. 
Thus we compute 
\[ 
f = \sum_{r \in R_f} c_r f_r %&= \sum_{r \in R_f} c_r \sum_{k \in I^r} \alpha_k^r[f_r] \hatf_k^r \\
= \sum_{r \in R_f} \sum_{k \in I^r} c_r \alpha_k^r[f_r] \hatf_k^r 
= \sum_{r \in R_f} \sum_{k \in I^r} \alpha_k^r \hatf_k^r 
\]
as needed. 
Finally, 
\begin{align*} 
\sum_{r \in R_f} \sum_{k \in I^r} \alpha_k^r 
&= \sum_{r \in R_f} \sum_{k \in I^r} \alpha_k^r \bigg( \sum_{z \in \supp(f)} \hatf_k^r(z) \bigg) \\
&= \sum_{z \in \supp(f)} \bigg(\sum_{r \in R_f} \sum_{k \in I^r} \alpha_k^r \hatf_k^r(z)\bigg) 
= \sum_{z \in \supp(f)} f(z)
= 1 . \qedhere
\end{align*}
\end{proof}
\end{theorem}

The next result is an extension of \cref{lem:calYzero_negative_coefficient_control}. 
The proof is the same. 
\begin{lemma}\label[lemma]{lem:calY_negative_coefficient_control}
Let $f \in \calY$ and use \cref{thm:Bernoulli_calY_representation} to write 
$$ f = \sum_{r \in R_f} \sum_{k \in I^r} \alpha_k^r \hatf_k^r . $$ 
Then, $\alpha_1^0 \ge 0$, and for any $r,j$ such that $\alpha_j^r \le 0$, we have 
$\alpha_{j+1}^r \ge \frac{p}{q}\abs{\alpha_j^r}$, and also
$\alpha_{j-1}^r \ge \frac{p}{q}\abs{\alpha_j^r}$ when $j-1 \in I^r$ (i.e., when $(r,j) \not\in \{(0,1),(1,0)\}$). 
\end{lemma}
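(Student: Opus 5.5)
We use the two formulas for each coefficient given by \cref{thm:Bernoulli_calY_representation}, and reduce each claimed inequality to a nonnegativity statement about values of the PMF. As in \cref{lem:calYzero_representation}, the symmetry equations read $pf(0)+qf(2)=pf(-2)+qf(0)$ and so on, with $p=\mathbb{P}(X=1)$ and $q=\mathbb{P}(X=-1)$.

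\textbf{Reduction to $p>q$.} First I would reduce to $p>q$. Reflecting $X\mapsto -X$ and $f\mapsto f_-$ via \cref{cor:psymmetry} swaps $p$ and $q$, and it should map the coefficient families onto each other with the same signs. This needs a short check against the formula for $\alpha_k^r$. If that check turns out to be awkward, I would instead carry the sign of $p-q$ through the algebra below, since every identity is symmetric in that respect.

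\textbf{The generic case.} Fix $r$ and $j$ with $(r,j)\neq(1,0)$. Write $a=f(-2j-r)\ge 0$ and $b=f(2j+r)\ge 0$, so that
\[ \alpha_j^r=\frac{2(pa-qb)}{p-q}. \]
The symmetry equation at $z=2j+r+1$ (for $r=0$ this is exactly the rearrangement in the proof of \cref{lem:calYzero_representation}, and the same rearrangement works for general $r$) gives the second expression
\[ \alpha_{j+1}^r=\frac{2(pb-qa)}{p-q}. \]
Now suppose $\alpha_j^r\le 0$, so $|\alpha_j^r|=-\alpha_j^r$. A direct computation gives
\[ \alpha_{j+1}^r-\tfrac{p}{q}\abs{\alpha_j^r}=\frac{2}{p-q}\Big(pb-qa+\tfrac{p}{q}(pa-qb)\Big)=\frac{2a(p^2-q^2)}{q(p-q)}=\frac{2a(p+q)}{q}\ge 0. \]
The $b$-terms cancel exactly, which is why the argument closes.

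\textbf{The inequality for $\alpha_{j-1}^r$.} The bound on $\alpha_{j-1}^r$ is the same computation shifted down by one index. Set $a'=f(-2j+2-r)$ and $b'=f(2j-2+r)$. Then $\alpha_{j-1}^r=2(pa'-qb')/(p-q)$ and, by the same symmetry relation, $\alpha_j^r=2(pb'-qa')/(p-q)$. Hence
\[ \alpha_{j-1}^r+\tfrac{p}{q}\alpha_j^r=\frac{2b'(p+q)}{q}\ge 0. \]
The claim $\alpha_1^0\ge0$ is immediate, because $\alpha_1^0=2f(0)$ by \cref{lem:calYzero_representation}.

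\textbf{Main obstacle: the boundary index $(r,j)=(1,0)$.} Here $\alpha_0^1=(pf(-1)-qf(1))/(p-q)$ carries a factor $1$ instead of $2$. The symmetry equation at $z=2$ still gives $\alpha_1^1=2(pf(1)-qf(-1))/(p-q)$. With $a=f(-1)$, $b=f(1)$ and $\alpha_0^1\le0$ (that is, $pa\le qb$), the cancellation is no longer exact:
\[ (p-q)\Big(\alpha_1^1-\tfrac{p}{q}\abs{\alpha_0^1}\Big)=pb+a\,\frac{p^2-2q^2}{q}. \]
I would finish using $b\ge pa/q$, which bounds the right-hand side below by $2a(p^2-q^2)/q\ge0$. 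The index-$j-1$ claim is vacuous here, since $j-1\notin I^1$. This is exactly the case excluded in the statement.
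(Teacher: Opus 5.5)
Your generic computations are correct, and they amount to the paper's argument. Multiplying your identity $\alpha_{j+1}^r+\frac{p}{q}\alpha_j^r=\frac{2}{q}f(-2j-r)$ by $\frac q2$ gives exactly $f(-2j-r)=\frac p2\alpha_j^r+\frac q2\alpha_{j+1}^r$. This is the expansion evaluated at a point where only $\hatf_j^r$ and $\hatf_{j+1}^r$ are nonzero, which is how the proof of \cref{lem:calYzero_negative_coefficient_control} (declared ``the same'' here) proceeds. These identities hold for every $p\neq q$, so no reduction to $p>q$ is needed; reflection would in any case turn $\frac pq$ into $\frac qp$.

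The gap is the sentence ``the bound on $\alpha_{j-1}^r$ is the same computation shifted down by one index.'' The statement includes $(r,j)=(1,1)$, because $0\in I^1$. There the left neighbour is the exceptional coefficient $\alpha_0^1=\frac{pf(-1)-qf(1)}{p-q}$, which has no factor $2$. So your formula $\alpha_{j-1}^r=\frac{2(pa'-qb')}{p-q}$ is wrong at that index. You accounted for the special normalization of $\alpha_0^1$ only when it sits at index $j$, not when it is the neighbour $j-1$. Redoing the algebra with the correct normalization gives
\[ \alpha_0^1+\frac{p}{q}\alpha_1^1=\frac{(2p^2-q^2)f(1)-pq\,f(-1)}{q(p-q)} . \]
With $p>q$, as in the paper, this is negative whenever $f(1)=0<f(-1)$, and then $\alpha_1^1<0$ automatically. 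Equivalently, $\hatf_0^1=f_{-X}$ carries mass $q$ rather than $\frac q2$ at $1$. Evaluating the expansion at $1$ therefore gives $f(1)=q\alpha_0^1+\frac p2\alpha_1^1$, and the most one can extract is $\alpha_0^1\ge\frac{p}{2q}\abs{\alpha_1^1}$.

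No argument can close this case, because the claim is false there. Take $p=\frac23$, and let $f(-5)=\frac13$, $f(-1)=f(3)=\frac14$, $f(5)=\frac16$, with $f=0$ elsewhere.
\begin{itemize}
\item The convolution $f\convolve f_X$ has masses $\frac19,\frac29,\frac1{12},\frac16,\frac1{12},\frac29,\frac19$ at $-6,-4,\dots,6$, so it is symmetric and $f\in\calY^1$.
\item The coefficient formulas give $f=\frac12\hatf_0^1-\frac12\hatf_1^1+\hatf_2^1$.
\item Thus $\alpha_1^1=-\frac12\le 0$, while $\alpha_0^1=\frac12<1=\frac pq\abs{\alpha_1^1}$.
\end{itemize}

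The paper's own proof overlooks the same boundary effect. This is harmless for the paper, which invokes the lemma only for $r\in[0,1)$ in \cref{lem:calYr_min_M2s}. A correct write-up must either exclude $(r,j)=(1,1)$ from the $\alpha_{j-1}$ claim or replace the bound there by $\alpha_0^1\ge\frac{p}{2q}\abs{\alpha_1^1}$. Your separate treatment of $(r,j)=(1,0)$ is fine. Evaluating at $-1$, where $f(-1)=p\alpha_0^1+\frac q2\alpha_1^1$, even gives the stronger $\alpha_1^1\ge\frac{2p}{q}\abs{\alpha_0^1}$.
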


%For applications of \cref{thm:Bernoulli_calY_representation} to the extreme points of $\calY$, see \cref{sec:Bernoulli_extreme_points}. or... the thesis of Pollard. 

\section{Variance symmetrization resistance in the reals}
\label{sec:var-reals}

\subsection{Variance symmetrization resistance of Bernoulli: an alternative approach}
\label[subsection]{sec:Bernoulli_variance}

We now continue with a discussion of variance symmetrization resistance. 
Denote the $k$th moment of a PMF $f$ by $M_k(f)$. 
%$$ M_k(f) = \sum_{z \in \supp(f)} z^k f(z). $$
We record the first and second moments of the functions $\hatf_k^r$ of \cref{def:Bern_hatfkr}.
\begin{lemma}\label[lemma]{lem:hatf_moments}
For $r \in [0,1]$ and $k \in I^r$, 
$$ M_1(\hatf_k^r) = -M_1(X) = q-p $$
and 
$$ M_2(\hatf_k^r) = 4k^2 -4k +2 + (4k-2)r + r^2. $$
%In particular, $M_2(f_0^1) = M_2(f_{-X}) = 1$. 
%\begin{proof}
%The result for the first moment follows directly from the fact that $\hatf_k^r \in \calY$ and $\mathbb{E}(X+Y) = 0$ for any $Y \sim f$ independent of $X$. 
%
%The result for the second moment is a straightforward computation from the definition of the $\hatf_k^r$. 
%We note that the described formula for $M_2(\hatf_k^r)$ also applies to the edge cases $M_2(f_0^1) = M_2(f_{-X}) = 1$ and $M_2(\hatf_1^0) = 2$. 
%\end{proof}
\end{lemma}

\begin{corollary}
For $k$ and $r$ such that $\hatf_k^r$ is defined, 
$$ \Var(\hatf_k^r) = 4k^2 -4k +2 + (4k-2)r + r^2 - (p-q)^2. $$
\end{corollary}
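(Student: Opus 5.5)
The corollary follows immediately from \cref{lem:hatf_moments}, since $\Var(\hatf_k^r) = M_2(\hatf_k^r) - M_1(\hatf_k^r)^2$. So the plan is to plug in $M_1(\hatf_k^r) = q-p$ and $M_2(\hatf_k^r) = 4k^2-4k+2+(4k-2)r+r^2$, and note that $(q-p)^2=(p-q)^2$. The only real work is to make sure the moment lemma (whose proof the paper omits) holds across all index ranges, so I would also sketch a verification of it.

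For the first moment, I would use that $\hatf_k^r\in\calY$ by \cref{lem:hatf_are_symmetrizers}. If $Y\sim\hatf_k^r$ is independent of $X$, then $X+Y$ is symmetric about zero and has finite mean, so $\mathbb{E}(X+Y)=0$. Hence $M_1(\hatf_k^r)=-\mathbb{E}X$. Here $X$ is the normalized Bernoulli implicit in the definition of $S^r$, taking the value $1$ with probability $p$ and $-1$ with probability $q$, so $\mathbb{E}X=p-q$. Alternatively, one can compute directly: $\hatf_k^r$ is the average of the two translates $\mathds{1}_{\pm(2k-1+r)}\convolve f_{-X}$, whose means are $\pm(2k-1+r)+(q-p)$, and these average to $q-p$.

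For the second moment, write $a=2k-1+r$. The PMF $\hatf_k^r$ is the law of $\varepsilon a - X$, where $\varepsilon$ is an independent uniform sign. Since $X^2=1$ and $\mathbb{E}\varepsilon=0$, the cross term vanishes, and
\[
M_2 = \mathbb{E}(\varepsilon a - X)^2 = a^2 + 1 = (2k-1+r)^2+1 = 4k^2-4k+2+(4k-2)r+r^2 .
\]
The edge cases need a separate check. For $(k,r)=(0,1)$ we have $a=0$, so $\hatf_0^1=f_{-X}$ and $M_2=1$, which matches the formula. For $(k,r)=(1,0)$ we have $a=1$ and $M_2=2$, which also matches. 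The case $a<0$ (for $k\le 0$ with $r\in(0,1)$) is harmless, because only $a^2$ enters.

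Subtracting $(q-p)^2$ from $M_2$ then gives the stated variance. I see no real obstacle. The only point needing care is to use the correct normalization of $X$ as $\pm1$-valued, which is the scaling under which $S^r$ and the symmetry equations in \cref{lem:Bernoulli_calYr_representation} are written. With that normalization the terms $X^2=1$ and $\mathbb{E}X=p-q$ are correct.
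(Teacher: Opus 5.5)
Your proposal is correct and follows the paper's route: the corollary is simply $\Var = M_2 - M_1^2$ applied to the moments given in \cref{lem:hatf_moments}, together with $(q-p)^2=(p-q)^2$. Your supplementary check of that lemma is also correct and matches the reasoning behind the paper's omitted proof. That check obtains the first moment from $\hatf_k^r\in\calY$, the second moment from the representation $\varepsilon a - X$ with $a=2k-1+r$, and verifies the edge cases $(k,r)=(0,1)$ and $(1,0)$.
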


\begin{lemma}\label[lemma]{lem:calYr_min_M2s}
Let $f \in \calY^r$. 
\begin{enumerate}[(i)]
\item[(i)] If $r=0$, then $M_2(f) \ge 2$, with equality if and only if $f = \hatf_1^0$. 
\item[(ii)] If $r\in(0,1)$, then $M_2(f) \ge 2-2r+r^2 > 1$, with equality if and only if $f = \hatf_0^r$. 
\item[(iii)] If $r=1$, then $M_2(f) \ge 1$, with equality if and only if $f = \hatf_0^1$. 
\end{enumerate}

\begin{proof}
First consider $r=0$. We compute
\begin{align}
M_2(f) 
%&= \sum_{z \in 2\mathbb{Z}} z^2 f(z) \\
= \sum_{\substack{z \in 2\mathbb{Z} \\ z \ne 0}} z^2 f(z)
&\ge \sum_{\substack{z \in 2\mathbb{Z} \\ z \ne 0}} 2^2 f(z) \label{eq:coord_redn_r_ineq1} \\
&= 4(1-f(0)) \nonumber \\
%&= 4\bigg(1-\frac{\alpha_1^0}{2}\bigg) \label{eq:coord_redn_r_eq1} \\
&= 4\bigg(1-\frac{1-\sum_{k \ge 2}\alpha_k^0}{2}\bigg) \label{eq:coord_redn_r_eq2}\\
&\ge 4\bigg(1-\frac{1}{2}\bigg) 
= 2 
= M_2(\hatf_1^0) .\label{eq:coord_redn_r_ineq2}  
\end{align}
where we have used 
\cref{thm:Bernoulli_calY_representation} in %\eqref{eq:coord_redn_r_eq1} and 
\eqref{eq:coord_redn_r_eq2},
and \cref{lem:calY_negative_coefficient_control} in \eqref{eq:coord_redn_r_ineq2}.
Equality is attained in \eqref{eq:coord_redn_r_ineq1} only if $\supp(f) \subseteq \{0, \pm 2\}$; %in turn 
which forces $f = \hatf_1^0$ by 
\cref{thm:Bernoulli_calY_representation}.
%shows that this implies $f = \hatf_1^0$. 
%by \cref{thm:Bernoulli_calY_representation}.
%, which attains equality in \eqref{eq:coord_redn_r_ineq2} as well. 
%Thus $M_2(f) = M_2(\hatf_1^0)$ if and only if $f = \hatf_1^0$. 

Now consider $r \in (0,1)$. 
%First note that $M_2(\hatf_0^r) = 2-2r+r^2$ from \cref{lem:hatf_moments}. 
Noting that the four smallest elements in magnitude of $S^r = 2\mathbb{Z} + \{\pm r\}$ are $\{\pm r, \pm(2-r)\}$, we compute
\begin{align*}
M_2(f) 
&= \sum_{z \in S^r} z^2 f(z) \\
%&= \sum_{z \in 2\mathbb{Z} + \{\pm r\}} z^2 f(z) \\
%&= \sum_{m \in 2\mathbb{Z}} (2m+r)^2 f(2m+r)
%    + \sum_{m \in 2\mathbb{Z}} (2m-r)^2 f(2m-r) \\
%&= r^2f(r) + \sum_{\substack{m \in 2\mathbb{Z}\\ m \ne 0}} (2m+r)^2 f(2m+r)
%    + (-r)^2f(-r) + \sum_{\substack{m \in 2\mathbb{Z}\\ m \ne 0}} (2m-r)^2 f(2m-r) \\
&\ge %r^2(f(r)+f(-r)) + \sum_{\substack{m \in 2\mathbb{Z}\\ m \ne 0}} (-2+r)^2 f(2m+r)
%    + \sum_{\substack{m \in 2\mathbb{Z}\\ m \ne 0}} (2-r)^2 f(2m-r) \\
%&= 
r^2(f(r)+f(-r)) + \sum_{\substack{z \in S^r \\ z \ne \pm r}} (2-r)^2 f(z) \\
&= r^2(f(r)+f(-r)) + (2-r)^2(1-f(r)-f(-r)) \\
%&= (r^2 - 4 +4r -r^2)(f(r) + f(-r)) + 4-4r+r^2 \\
%&= (-4 +4r)(f(r) + f(-r)) + 4-4r+r^2 \\
%&= 4(-1)(1-r)(f(r) + f(-r)) + 4(1-r)+r^2 \\
%&= 4(1-r)(1 + (-1)(f(r) + f(-r))) + r^2 \\
&= 4(1-r)(1 - f(r) - f(-r)) + r^2 .
\end{align*}
By \cref{lem:calY_negative_coefficient_control}, 
%$\sum_{\substack{k \in \mathbb{Z} \\ k \ne 0,1}} \alpha_k^r \ge 0$, 
%so that 
\( 
\alpha_0^r +\alpha_1^r 
= 1-\sum_{\substack{k \in \mathbb{Z} \\ k \ne 0,1}} \alpha_k^r
\le 1,
\)
and 
$$f(r) 
= \alpha_0^r \hatf_0^r(r) + \alpha_1^r \hatf_1^r(r) 
= \alpha_0^r (\frac{q}{2}) + \alpha_1^r (\frac{p}{2})$$
and similarly 
$$f(-r) 
= \alpha_0^r \hatf_0^r(-r) + \alpha_1^r \hatf_1^r(-r) 
= \alpha_0^r (\frac{p}{2}) + \alpha_1^r (\frac{q}{2})$$
so that 
$f(r) + f(-r) 
%= \alpha_0^r (\frac{q}{2} + \frac{p}{2}) + \alpha_1^r (\frac{p}{2} + \frac{q}{2}) 
%= \alpha_0^r (\frac{p+q}{2}) + \alpha_1^r (\frac{p+q}{2}) 
= \frac{1}{2}(\alpha_0^r + \alpha_1^r)
\le \frac{1}{2}
$. 
Therefore
\begin{align*}
M_2(f) 
&\ge 4(1-r)(1-\frac{1}{2}) + r^2 
= 2(1-r) + r^2 
%&= 2 - 2r + r^2 
= M_2(\hatf_0^r) . 
\end{align*}
Equality is attained only if $\supp(f) \subseteq \{\pm r, \pm(2-r)\}$, which forces $f = \hatf_0^r$ by \cref{thm:Bernoulli_calY_representation}. 
%The converse is obvious. 

Finally consider $r=1$. 
In this case $\supp(f) = 2\mathbb{Z}+\{1\}$, so 
\[ 
M_2(f) = \sum_{z \in \supp(f)} z^2 f(z) 
\ge \sum_{z \in \supp(f)} f(z) = 1 = M_2(\hatf_0^1). 
\]
In order to attain equality we need $z^2 = 1$ for every $z \in \supp(f)$; in this case $\supp(f) \subseteq \{\pm 1\}$ and $f = \hatf_0^1$ by \cref{lem:support_size_one_or_two}. 
\end{proof}
\end{lemma}

We now give a new proof of the main theorem of \cite{KMSVV99,Pal08}. 
\begin{theorem}[Variance symmetrization resistance of Bernoulli]\label{thm:VarY_exceeds_VarX}
Let $X \sim \Bern(p,a,b)$ for some $p \ne \frac{1}{2}$. 
Then, any independent symmetrizer $Y$ of $X$ satisfies $\Var(Y) \ge \Var(X)$, with equality if and only if $f_Y = f_{-X}$.

\begin{proof}
By \cref{lem:affine_invariance} we may take $X \sim \Bern(p,-1,1)$. 
%We show that every $f \in \calY$ satisfies $\Var(f) \ge \Var(f_{-X}) = 4pq$, with equality if and only if $f = f_{-X}$. 
Let $f \in \calY$, 
define $R_f$ as in \cref{thm:Bernoulli_calY_representation}, 
and define %$ f_r(\cdot) = f(\cdot)\mathds{1}_{I^r}(\cdot)$ 
for each $r \in R_f$ constants $c_r \coloneqq \sum_{z \in S^r}f(z)$ and PMFs $g_r(\cdot) \coloneqq \frac{1}{c_r}f(\cdot)\mathds{1}_{S^r}(\cdot) \in \calY^r$ as in \cref{lem:Sr_restriction_of_symmetrizers_produces_symmetrizers}. 
%Then $f = \sum_{r \in R_f} f_r$. 
%Define 
%Then for each $r \in R_f$, $g_r \in \calY^r$ by \cref{lem:Sr_restriction_of_symmetrizers_produces_symmetrizers}. 
Then $f = \sum_{r \in R_f} c_r g_r$ and, by the concavity of the variance,
\begin{align}
\Var(f) 
\ge \sum_{r \in R_f} c_r \Var(g_r) \nonumber
&= \sum_{r \in R_f} c_r (M_2(g_r) - M_1(g_r)^2) \nonumber \\
\label{eq:varsymmres2} &\ge \sum_{r \in R_f} c_r (1 - (p-q)^2) \\
%\ge \sum_{r \in R_f} c_r - (p-q)^2
%= \sum_{r \in R_f} c_r M_2(g_r) - (p-q)^2 \sum_{r \in R_f} c_r
%\ge \sum_{r \in R_f} c_r - (p-q)^2
&= 1 - (p-q)^2 
= \Var(f_{-X}) \nonumber
\end{align}
where we have used \cref{lem:calYr_min_M2s} in (\ref{eq:varsymmres2}). %the second inequality. 
%Finally note that $1-(p-q)^2 = (p+q)^2 - (p-q)^2 = 4pq$. 

Note that by \cref{lem:calYr_min_M2s}, equality is attained 
%in (\ref{eq:varsymmres1}) if and only if for all $r \in R_f$, $\Var(f) = \Var(g_r)$. 
in (\ref{eq:varsymmres2}) if and only if for all $r \in R_f$, $M_2(g_r) = 1$. 
%According to \cref{lem:calYr_min_M2s}, this only happens when $r=1$ and 
%This forces $g_r = f_1^k$ for every $r \in R_f$;
In this case $R_f = \{1\}$ and $f = g_1 = \hatf_0^1$. %; the converse equality condition is obvious. 
%To see when $\Var(f) = \Var(f_{-X})$, 
%note that since the mean of every function in $\calY$ is fixed at $-\mathbb{E}X$,  \cref{lem:calYr_min_M2s} enumerates the minimum-variance functions in each $\calY^r$. 
%Thus the minimum-variance function in each $\calY^r$ has variance $\varphi(r) = 2-2r+r^2 - (\mathbb{E}X)^2$. % for all $r \in [0,1]$ according to \cref{lem:calYr_min_M2s}.
%Clearly $\varphi(r)$ attains its unique minimum on $[0,1]$ at $r=1$, so the unique variance-minimizing element of the spaces $\calY^r$ is $\hatf_k^1$. 
%As $f$ is a convex combination of functions from the spaces $\calY^r$, the variance of $f$ is minimized if and only if $f = \hatf_0^1$. 
%Since $f = \sum c_r g_r$ with each $g_r \in \calY^r$, $\Var(f)$ is minimized if and only if $f = \hatf_0^1$.  
%Thus \[ \Var(X) \ge c_1\Var(g_1) + (1-c_1)\Var(h_1) \]
%the minimizer $f$ of $\Var(f)$ must lie in some $\calY^r$ by virtue of the concavity of variance and the convex combination $f = \sum_{r \in R_f} c^r g^r$. 
%minimum-variance $2-2r+r^2$ is decreasing on $(0,1)$ and does not attain its minimum value of $2-2(1)+(1)^2 = 1$ on that interval. 
%By \cref{lem:calYr_min_M2s}, then, equality in $\Var(f) \ge \Var(f_{-X})$ implies $R_f = \{1\}$. 
%By the equality condition for $\calY^1$ in \cref{lem:calYr_min_M2s}, we must then also have $f = \hatf_0^1 = f_{-X}$. 
%The converse is obvious. 
\end{proof}
\end{theorem}

\subsection{Nonnegative random variables on the real line} 
In the present section, we explore how far the method of Pal \cite{Pal08} can be adapted to address variance symmetrization resistance for nonnegative random variables.

\begin{theorem}\label{thm:V-symm-nonneg}
Let $X \sim \mu$ be supported on $[0,\infty)$ with $\mathbb{E}X =m_1$. Then $X$ is $c$-Variance symmetrization resistant for 
\[ c = 1- \frac{2}{\Var(X)} \int_0^{m_1} (x-m_1)^2 d\mu(x) . \]
\end{theorem}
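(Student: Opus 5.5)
The plan is to avoid the stochastic-calculus machinery and argue directly through the symmetric sum. Let $Y$ be any independent symmetrizer of $X$. If $\Var(Y)=\infty$ there is nothing to prove, so assume $\Var(Y)<\infty$; we may also assume $\Var(X)<\infty$, since otherwise $X+Y$ cannot be symmetric with finite variance. Set $Z=X+Y$. Then $Z$ is symmetric about zero, so $\mathbb{E}Z=0$ and $\mathbb{E}Y=-m_1$. By independence,
\[ \Var(Y)=\Var(Z)-\Var(X)=\mathbb{E}Z^2-\Var(X). \]
The claimed inequality $\Var(Y)\ge c\Var(X)$ is therefore equivalent to
\[ \mathbb{E}Z^2\ \ge\ 2\Var(X)-2\int_0^{m_1}(x-m_1)^2\,d\mu(x)=2\int_{(m_1,\infty)}(x-m_1)^2\,d\mu(x)=2\,\mathbb{E}\big[(X-m_1)_+^2\big]. \]
Here we use that $\mu$ lives on $[0,\infty)$, so $\Var(X)$ splits into the parts over $[0,m_1]$ and $(m_1,\infty)$; the point $x=m_1$ contributes nothing.

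To prove this, write $Z=(X-m_1)+W$ with $W=Y+m_1$. The variable $W$ has mean zero and is independent of $X$. Since $Z$ is symmetric about zero, $\mathbb{E}Z^2=\mathbb{E}Z_+^2+\mathbb{E}Z_-^2=2\,\mathbb{E}Z_+^2$. Now condition on $X$. The map $t\mapsto (a+t)_+^2$ is convex, so Jensen's inequality with $\mathbb{E}W=0$ gives
\[ \mathbb{E}\big[((X-m_1)+W)_+^2\,\big|\,X\big]\ \ge\ (X-m_1)_+^2 . \]
Taking expectations yields $\mathbb{E}Z_+^2\ge \mathbb{E}[(X-m_1)_+^2]$. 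Combining this with the two displays above gives $\Var(Y)\ge c\Var(X)$ with exactly the stated $c$.

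The step I expect to need the most care is the bookkeeping rather than a real obstacle. First, the finite-variance reductions need a check: if $\Var(Y)<\infty$ and $Z$ has finite variance, then $X$ does too. Second, we should confirm that "symmetric about zero" (and not merely about some $w$) is the correct normalization for the definition of a symmetrizer; the Remark after the definitions fixes this, since symmetry of $X+Y$ forces mean zero. No earlier structural results such as \cref{thm:Bernoulli_calY_representation} are needed for this general bound.
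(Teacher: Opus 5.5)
Your proof is correct, and it reaches exactly the paper's bound by a more elementary route.

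\textbf{The paper's route.} The paper follows Pal. It Skorokhod-embeds $Y-\mathbb{E}Y$ into a Brownian motion started at $X-m_1$. It then uses It\^o's formula (in Aebi's generalized form, since its $\rho$ is not $C^2$) together with optional sampling to get $\Var(Y)=\mathbb{E}T\ge 2\,\mathbb{E}\rho(X-m_1)$ for odd $\rho$ with $|\rho''|\le 1$. Finally it takes $\rho(x)=\frac12[(x-m_1)^2-m_1^2]$ for $x\ge 0$, extended oddly.

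\textbf{Why the bounds coincide.} That $\rho$ equals $\frac12 x|x|-m_1x$. The linear part contributes nothing because $\mathbb{E}(X-m_1)=0$. Moreover $\frac12x|x|+\frac12x^2=x_+^2$ is precisely the convex function you apply Jensen to. So both arguments yield $\Var(Y)\ge \mathbb{E}(X-m_1)_+^2-\mathbb{E}(X-m_1)_-^2$, which is the stated $c\Var(X)$.

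\textbf{What your device gives more generally.} It reproduces the paper's entire family of bounds. Suppose $\rho$ is odd and $\phi(x)=\rho(x)+\frac12x^2$ is convex (for odd $\rho$ this is the same as $|\rho''|\le1$ distributionally). Then conditional Jensen with $\mathbb{E}W=0$ gives $0=\mathbb{E}\rho(Z)\ge\mathbb{E}\rho(X-m_1)-\frac12\Var(Y)$. No smoothness, embedding, stopping-time integrability, or generalized It\^o formula is needed. The It\^o identity only rewrites the Jensen gap $\mathbb{E}\phi(B_T)-\mathbb{E}\phi(B_0)$ as $\frac12\mathbb{E}\int_0^T\phi''(B_s)\,ds$, and the paper never exploits that representation. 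What the stochastic route buys here is mainly the link to Pal's Bernoulli proof.

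\textbf{One bookkeeping point.} Your reason for discarding $\Var(X)=\infty$ is not by itself an argument, since in that case $Z$ simply has infinite variance. You have two ways to close this:
\begin{itemize}
\item State that the theorem presupposes $0<\Var(X)<\infty$, as the paper tacitly does when dividing by it.
\item Use $X\ge 0$. Then $Z\ge Y$ gives $Z_-\le Y_-$, so $\Var(Y)<\infty$ would make $\mathbb{E}Z_-^2$ finite. But $\mathbb{E}Z^2=\infty$ by Minkowski, so $\mathbb{E}Z_+^2=\infty$, contradicting $Z\overset{D}{=}-Z$.
\end{itemize}
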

\begin{proof}
Let $X \sim \mu$ be supported on $[0,\infty)$ with $\mathbb{E}X =m_1$ and $\Var(X) =m_2$. 
Let $Y \sim f_Y \in \calY(X)$ with finite variance. Note $\mathbb{E}Y = -\mathbb{E}X = -m_1$. 
By Skorokhod embedding, there exists a stopping time $T$ of the standard Wiener process $W_t$ such that $W_T \overset{D}{=} Y-\mathbb{E}Y$. 
Thus for the Wiener process $B_t = W_t + (X-\mathbb{E}X)$, we have $B_T \overset{D}{=} X+Y$. 
Note also that $B_0\overset{D}{=} X -m_1$, and write $\tilde{\mu}$ for the distribution of $B_0$ (which is just a shifted version of $\mu$ supported on some subset of $[-m_1, \infty)$). 

Let $\rho \colon \mathbb{R} \rightarrow \mathbb{R}$ be odd and $C^2$ with $\abs{\rho''} \le 1$. 
By It\^o's rule, there exists a martingale $M_t$ such that 
\[ \rho(B_t) - \rho(B_0) = M_t + \frac{1}{2} \int_0^t \rho''(B_s) ds \]
taking expectations, 
\[ \mathbb{E}\rho(B_T) - \mathbb{E}\rho(B_0) = \mathbb{E}M_T + \frac{1}{2} \mathbb{E} \int_0^T \rho''(B_s) ds.  \]
Since $X+Y$ is symmetric and $\rho$ is odd, $\mathbb{E}\rho(B_T) = 0$, and by optional sampling, $\mathbb{E}M_T=0$, so that
\begin{align*}
%2\int_{-m_1}^{\infty}\rho(x) d\tilde{\mu}(x)
%- 2 \mathbb{E}\rho(X-\mathbb{E}X) 
2 \mathbb{E}\rho(B_0) 
&= -\mathbb{E} \int_0^T \rho''(B_s) ds \\
&= \int_{\mathbb{R}}  \mathbb{E}\left[ \int_0^T -\rho''(B_s) ds \mid B_0 = x \right] d\tilde{\mu}(x) \\
%&= \int_{-1}^{\infty} e^{-(x+1)} \mathbb{E}\left[ \int_0^T \rho''(x+W_s) ds \mid B_0 = x \right] dx \\
&\le \int_{\mathbb{R}}  \mathbb{E}\left[ \int_0^T \abs{-\rho''(B_s)} ds \mid B_0 = x \right] d\tilde{\mu}(x) \\
&\le \int_{\mathbb{R}} \mathbb{E}\left[ \int_0^T 1 ds \mid B_0 = x \right] d\tilde{\mu}(x) \\
&= \int_{\mathbb{R}} \mathbb{E}[T\mid B_0 = x] \cdot d\tilde{\mu}(x)\\
&= \mathbb{E} \big[ \mathbb{E}[T|B_0] \big] \\
&=  \mathbb{E}T
%&= \Var(Y) \int_{-1}^{\infty} e^{-(x+1)} dx \\
= \Var(Y) .
\end{align*}

Let us now consider a specific choice of $\rho$:
$$
\rho(x)=\frac{1}{2}\big[ (x-m_1)^2-m_1^2\big] 
$$
for $x\geq 0$, and defined via the oddness property $\rho(x) \coloneqq -\rho(-x)$ for $x < 0$. 
Clearly, this $\rho$ is continuous and odd; moreover,  $\rho''(x)\in [-1,1]$ for all $x\neq 0$. It is also easy to check that $\rho$ is continuously differentiable at 0, but $\rho''(0)$ does not exist and hence $\rho$ is not $C^2$. Nonetheless, $\rho''$ does exist in the sense of Schwarz distributions. Therefore, applying Aebi's \cite{Aeb92} extension of It\^o's formula, the above argument still holds.

We can write, for all $x\in \R$,
$$
2\rho(x)= \big[ (x-m_1)^2-m_1^2\big] - \tilde{\rho}(x) ,
$$
where $\tilde{\rho}(x)=2x^2$ if $x<0$,
and $\tilde{\rho}(x)=0$ otherwise.
Thus we have
\begin{align*}
\Var(Y) 
&\geq  \mathbb{E} [2\rho(X-m_1)] \\
&= \mathbb{E}\big[ (X-2m_1)^2-m_1^2\big] - \mathbb{E}\tilde{\rho}(X-m_1)\\
&=\mathbb{E}(X^2)+4m_1^2 -4m_1 \mathbb{E}(X)-m_1^2- \int_0^{m_1} 2(x-m_1)^2 d\mu(x) \\
%&= m_2 -m_1^2 +\int \\
&= \Var(X) - 2\int_0^{m_1} (x-m_1)^2 d\mu(x) 
\end{align*}
so that $X$ is $c$-Variance symmetrization resistant for 
\[ c = 1- \frac{2}{\Var(X)} \int_0^{m_1} (x-m_1)^2 d\mu(x) . \]
\end{proof}

For example, for $X \sim \Exp(1)$,
\[ c = 1- \frac{2}{1} \int_0^1 (x-1)^2 e^{-x} dx \approx 0.4715, \]
for $X \sim \gamma(2,1)$,
\[ c 
= 1-\frac{2}{2} \int_0^2 (x-2)^2 \frac{1}{\Gamma(2)} x e^{-x} dx \approx 0.3534, 
\]
and for $X \sim \Poisson(1)$,
\[ c
= 1-\frac{2}{e} \sum_{k=0}^1 \frac{(k-1)^2}{k!} 
= 1-\frac{2}{e} 
\approx 0.2642 .
\]

It is easy to see that Pal's method cannot possibly achieve a constant $c=1$ above for any choice of function $\rho$. 
Nonetheless, we conjectured that the exponential distribution is Variance-symmetrization-resistant. 
We communicated this conjecture to Jiange Li, who resolved it in the affirmative for the case of absolutely continuous symmetrizers \cite{Li26} using different ideas.

\section[Entropic symmetrization resistance on the hypercube]{Entropic symmetrization resistance on $\{0,1\}^d$}
\label{sec:Bernoulli_entropy}
%\label[subsection]{sec:Bernoulli_entropy}

\subsection{Dimension 1}

We wish to investigate symmetrization resistance of asymmetric $X \sim \Bern(p,a,b)$. 
By \cref{lem:affine_invariance} we may assume that $X\sim\Bern(p,-1,1)$; \cref{cor:psymmetry} makes it clear that we may also assume $p > \frac{1}{2}$. 
We will write $q = 1-p$ for ease of notation. 
In particular, $p = \max(p,q)$ and $q = \min(p,q)$. 

%We begin with an elementary lemma. 

\begin{lemma}\label[lemma]{lem:support_size_one_or_two}
All $f \in \calY$ satisfy $|\supp(f)| \ge 2$, 
and if $|\supp(f)| = 2$ then $f = f_{-X}$.  

\begin{proof}
Taking $Y \sim f$, the first assertion is clear: $Y$ a constant would force $Y = 0$ (as the support of $X+Y$ must be symmetric), 
%and then $p = \frac{1}{2}$, 
contradicting asymmetry of $X$. 

Now consider $\supp(f) = \{y_1, y_2\}$ with $y_1 < y_2$.
By symmetry of $X+Y$, $(1-p)f(y_1) = pf(y_2)%$, so that $(1-p)f(y_1) 
= p(1-f(y_1))$; hence $f(y_1)=p$. 
%Thus $f(y_2) = q$. 
\end{proof}
\end{lemma}

\begin{lemma}\label[lemma]{no_atoms_bigger_than_p}
Let $f \in \calY$ with $\abs{\supp(f)} \ge 3$. 
For all $y \ne 0$, $f(y) < p$. 
\begin{proof}
We separate the proof into two cases, when $y=1$ and when $y \ne 1$. 
In the case where $y \ne 1$, 
\begin{align*}
q f(-y+2) + p f(-y)
&=  f_{X+Y}(-y+1) \\
&= f_{X+Y}(y-1) = q f(y) + p f(y-2) 
\ge qf(y). 
\end{align*}
Dividing both sides by $p$ and adding $f(y)$, 
\[
f(y) + \frac{q}{p} f(-y+2) + f(-y) \ge \frac{q}{p}f(y) + f(y) = \frac{1}{p}f(y) .
\]
Since $\frac{q}{p} < 1$ and the values $y$, $-y+2$, and $-y$ are distinct, the left-hand side of the preceding equation is strictly bounded above by $1$. 
%\(
%1 \ge f(y) + f(-y+2) + f(-y) \ge \frac{1}{p}f(y) .
%\)

Similarly, when $y=1$, we have 
\[
qf(-1) + pf(-3) 
= f_{X+Y}(-2) 
= f_{X+Y}(2)
= qf(3) + pf(1) 
\ge pf(1) 
\]
so that
\[
f(-1) + f(-3) \ge \frac{q}{p} f(-1) + f(-3) \ge f(1) . 
\]
Adding $f(1)$ to both sides, we see that 
\(
1 \ge f(1) + f(-1) + f(-3) \ge 2f(1)
\);
in particular $f(1) \le \frac{1}{2} < p$. 
\end{proof}
\end{lemma}

\begin{lemma}\label[lemma]{unique_large_mass}
Let $f \in \calY$ with $\abs{\supp(f)} \ge 3$. 
At least one of of the following holds: 
\begin{enumerate}[(i)]
\item $H(f) > H(X)$
\item$f(0) \in (0,q) \cup (p,1)$, and for all $y \ne 0$, $f(y) \in (0,q)$.
\end{enumerate}

\begin{proof}
Fix $y \in \mathbb{R}$. 
Define the \emph{Bernoulli entropy function} 
$H_B(t) = H(\Bern(t))$ for $t \in (0,1)$. % and define $H_B(0) = H_B(1) = 0$ so that $H_B$ is continuous on $[0,1]$. 
It is easy to see that $H_B$ is concave, symmetric about $t=\frac{1}{2}$, and achieves its maximum at $t=\frac{1}{2}$. 
%Because $H(Y) \ge H(\varphi(Y))$ for any function $\varphi$, we know that 
If there exists $y$ such that $f(y) \in [q, p]$, then, writing $Y \sim f$, we have $H(Y) > H(\mathds{1}\{Y=y\}) = H_B(f(y)) \geq H_B(p) = H(X)$. 
The result now follows from \cref{no_atoms_bigger_than_p}. 
\end{proof}
\end{lemma}

The next lemma shows that any counterexample to the entropic symmetrization resistance of Bernoullis must be concentrated at its median (which must be unique). 

\begin{lemma}\label[lemma]{large_median_at_zero}
Let $f \in \calY$ with $\abs{\supp(f)} \ge 3$. 
Then, $H(f) > H(X)$ or $f(0) > p$. 
\begin{proof}
If $f < \frac{1}{2}$ everywhere,
\[
H(f) = \sum_{y} f(y) \log_2(1/f(y)) 
\ge \sum_y f(y) = 1 > H(X). 
\]
Otherwise, we apply \cref{unique_large_mass}. 
\end{proof}
\end{lemma}

Now we show that elements of $\calY$ have (rescaled) projections in the spaces $\calY^r$ where they are supported. 
\begin{lemma}\label[lemma]{lem:Sr_restriction_of_symmetrizers_produces_symmetrizers}
Let $f \in \calY$ and let $r > 0$ satisfy $\supp(f) \cap S^r \ne \emptyset$. 
Then 
\( g := (\frac{1}{c}) f \mathds{1}_{S^r} \in \calY^r, \)
where $c = \sum_{z \in S^r} f(z)$. 
Moreover, if $c < 1$, then $\frac{f-cg}{1-c} \in \calY$.

\begin{proof}
Fix $r \in [0,\infty)$ and denote $ F_z = \{\pm z,-z\pm2,z\pm2\}$ for $z \in \mathbb{R}$. 
Writing one of $z = 2k - r$ (or $z = 2k + r$) for some $k \in \mathbb{Z}$, we have $-z = -2k+r$ (or $-z = -2k -r$); either way, $-z \in S^r$. 
Also, $z \in S^r$ implies $\{z \pm 2\} \subseteq S^r$ and $-z \in S^r$ implies $\{-z \pm 2\} \subseteq S^r$. 
Thus $z \in S^r$ if and only if $F_z \subseteq S^r$. 

Next, write $g = (\frac{1}{c}) f \mathds{1}_{S^r}$ and consider the symmetry equations 
\begin{align}
\label{symmeqa} q\varphi(z+2) + p\varphi(z) 	&= q\varphi(-z) + p\varphi(-z-2) \\
\label{symmeqb} q\varphi(z) + p\varphi(z-2)   	&= q\varphi(-z+2) + p\varphi(-z)  . 
\end{align}
Note that $\varphi$ is only evaluated at elements of $F_z$ in (\ref{symmeqa}-\ref{symmeqb}). 
When $z \in S^r$, clearly $\varphi=f$ solves (\ref{symmeqa}-\ref{symmeqb}); since both equations are linear and $g = \frac{1}{c}f$ on $F_z$, $g$ is also a solution. 
When $z \not\in S^r$, $g = 0$ on $F_z$ and solves (\ref{symmeqa}-\ref{symmeqb}) trivially. 
Thus $g \in \calY$, and indeed $g \in \calY^r$.

Finally, when %$\supp(f) \setminus S^r \ne \emptyset$, we have 
$c < 1$, we have $(\frac{1}{1-c}) f \mathds{1}_{\mathbb{R} \setminus S^r} = \frac{f-cg}{1-c} \in \calY$ by \cref{lem:closure_under_subtraction}.
\end{proof}
\end{lemma}

%The following lemma shows that %to establish entropic symmetrization resistance 
The next result shows that it is sufficient to investigate $\calY^0$. 
\begin{lemma}\label[lemma]{calYzero_sufficient}
Let $f \in \calY$ with $\abs{\supp(f)} \ge 3$. 
At least one of of the following holds: 
\begin{enumerate}[(i)]
\item $H(f) > H(X)$
\item $g = \big(\sum_{k \in S^0} f(k)\big)^{-1} f \mathds{1}_{S^0} \in \calY^0$ and $H(g) \le H(X)$.
\end{enumerate}

\begin{proof}
Note that $S^0 = 2 \mathbb{Z}$. 
%The case $f = f_{-X}$ is trivial; so we assume hereafter than $f \ne f_{-X}$. 
By \cref{large_median_at_zero}, $c = \sum_{i \in 2\mathbb{Z}} f(i) > 0$, 
so $g \in \calY^0$ by \cref{lem:Sr_restriction_of_symmetrizers_produces_symmetrizers}. 
%We only need to show that $H(g) \le H(X)$. 
%Indeed, 
If $c = 1$, $H(g) = H(f) \le H(X)$. 
Otherwise, $c \in (0,1)$ and 
$ f = cg + (1-c)h $,
where 
$ h = \frac{f-cg}{1-c}$ is a PMF according to \cref{lem:Sr_restriction_of_symmetrizers_produces_symmetrizers}. 
%since clearly $1-c = \sum_{y \in \supp(f) \setminus 2\mathbb{Z}} f(y)$, $h$ is a PMF. 
%We don't actually use the fact that h \in \calY anywhere, so we don't need to prove it here, though it is true. 
Noting that $h(0) = 0$, \cref{large_median_at_zero} shows that $H(h) > H(X)$; by concavity of entropy, therefore, 
\[ H(f) > cH(g) + (1-c)H(h) > cH(g) + (1-c)H(X) .\] 
Thus either $H(f) > H(X)$ or $H(X) \ge H(f) > cH(g) + (1-c)H(X)$. 
%By concavity of entropy
%\begin{align*} 
%H(X) \ge H(f) &> cH(g) + (1-c)H(h) .
%\intertext{Because $h(0) = 0$ by construction, \cref{large_median_at_zero} shows that 
%either $h = f_{-X}$ and $H(h) = H(X)$, or else $h \ne f_{-X}$ and $H(h) > H(X)$. 
%In either case, 
%$H(h) \ge H(X)$, so }
%H(X) &> cH(g) + (1-c)H(X) . \qedhere
%\end{align*}
%so $H(X) > H(g)$.
\end{proof}
\end{lemma}

Although $\calY^0$ is a convex space, the coefficients $\alpha_i^0$ in \cref{lem:calYzero_representation} may be negative.
For example, for any positive integer $n$ we have $(2-\frac{q}{p})^{-1}(\hatf_n^0 - \frac{q}{p}\hatf_{n+1}^0 + \hatf_{n+2}^0) \in \calY^0$. 
However, as we now demonstrate, %there cannot be too many negative $\alpha_i^0$, and 
the magnitude of any negative $\alpha_i^0$ is smaller than that of its (necessarily positive) neighbors $\alpha_{i\pm1}^0$. %, which are always positive. 

\begin{lemma}\label[lemma]{lem:calYzero_negative_coefficient_control}
Let $f \in \calY^0$ and write
\( f = \sum_{k=1}^{\infty} \alpha_k^0 \hatf_k^0 \)
as in \cref{lem:calYzero_representation}.
Then $\alpha_1^0 \ge 0$, and for all integers $j \ge 2$ such that $\alpha_j^0 \le 0$, we have $\min\{\alpha_{j\pm1}^0\} \ge \frac{p}{q}\abs{\alpha_j^0}$. 
%Moreover, when $j \ge 2$ we also have 
%and  \ge \frac{p}{q}\abs{\alpha_j^0}$.  

\begin{proof} 
Clearly 
\[ 0 \le f(0) = \sum_{k \ge 1} \alpha_k^0 \hatf_k^0(0) = \alpha_1^0 \hatf_1^0(0) = \frac{\alpha_1^0}{2}, \]
thus $\alpha_1^0 \ge 0$. 
Now let $j$ be such that $\alpha_j^0 \le 0$. 
Then, 
\begin{align*}
0 \le f(-2j) 
&= \alpha_{j+1}^0 \hatf_{j+1}^0(-2j) + \alpha_{j}^0 \hatf_{j}^0(-2j) \\
&= \alpha_{j+1}^0 \left(\frac{q}{2}\right) + \alpha_{j}^0 \left( \frac{p}{2} \right) , 
\end{align*}
thus $\alpha_{j+1}^0 \ge -\frac{p}{q}\alpha_j^0 = \frac{p}{q}\abs{\alpha_j^0}$. 
%$-\alpha_j^0 p \le q\alpha_{j+1}^0$, 
%i.e. $\alpha_{j+1}^0 \ge \frac{p}{q} \abs{\alpha_{j}^0}$. 
The same argument applied at $f(2j-2)$ yields
\[
0 \le f(2j-2) = \left(\frac{p}{2}\right)\alpha_j^0 + \left(\frac{q}{2}\right)\alpha_{j-1}^0 ,
\]
thus $\alpha_{j-1}^0 \ge -\frac{p}{q}\alpha_j^0 = \frac{p}{q}\abs{\alpha_j^0}$ as well. 
\end{proof}
\end{lemma}

We now prove the main result. 
\begin{theorem}[Entropic symmetrization resistance of Bernoulli]\label[theorem]{thm:HY_exceeds_HX}
Let $X \sim \Bern(p,a,b)$ with $p \ne \frac{1}{2}$, i.e. $\mathbb{P}(X=b) = p$ and $\mathbb{P}(X=a) = 1-p$. 
Then, any independent symmetrizer $Y$ of $X$ satisfies $H(Y) \ge H(X)$, with equality if and only if $f_Y = f_{-X}$. 
\begin{proof}
By \cref{lem:affine_invariance,cor:psymmetry} it is sufficient to consider $X \sim \Bern(p,-1,1)$ 
%and by \cref{cor:psymmetry} it is sufficient to consider 
with $p > \frac{1}{2}$. 
%Write $f = f_Y \in \calY$. 
Equality is attained when $f_Y=f_{-X}$ by \cref{lem:support_size_one_or_two}; therefore we only need to show that $H(Y) > H(X)$ when $f_Y \ne f_{-X}$, i.e., $\abs{\supp(f_Y)} \ge 3$. 
%Assume $f \ne f_{-X}$ and $H(f) \le H(X)$. 

In this case, by \cref{calYzero_sufficient} either $H(f_Y) > H(X)$ or projecting $f_Y$ onto $\calY^0$ does not decrease its entropy; therefore it suffices to consider %it is sufficient to consider %$f(0) > p$ and
$f_Y \in \calY^0$. %,large_median_at_zero}. 
By \cref{lem:calYzero_representation}, we write $f_Y = \sum_{i=1}^{\infty} \alpha_i^0 \hatf_i^0$ with $\sum_{i=1}^{\infty} \alpha_i^0 = 1$. 
Then, 
$ f(0) = \alpha_1^0 \hatf_1^0(0) = \frac{\alpha_1^0}{2}$, 
so that $\alpha_1^0 > 2p > 1$. 
Note that $\alpha_1^0 = 1 - \sum_{i=2}^{\infty} \alpha_i^0$. 
But, by \cref{lem:calYzero_negative_coefficient_control}, $\sum_{i=2}^{\infty} \alpha_i^0 \ge 0$, so $\alpha_1^0 \le 1$, a contradiction. 
%Thus $H(f) > H(X)$. 
\end{proof}
\end{theorem}

%We remark that the bound in \cref{thm:HY_exceeds_HX} can be approached by a sequence of symmetrizer PMFs not equal to $f_{-X}$. 
%Indeed, define $f_n(x) = \frac{1}{n}f_{-X} + (1-\frac{1}{n})\hatf_1^0(-X) \in \calY$; a straightforward computation shows that $H(f_n) \rightarrow H(f_{-X}) = H(X)$. % as $n \rightarrow \infty$. 

\subsection{Symmetrization on the hypercube}\label[section]{sec:hypercube}
In this section we consider discrete $X,Y \in_R \R^d$.

\begin{notation}
For a nonempty subset $E$ of $[d]$ and vector $z \in \mathbb{R}^d$, denote by $z[E]$ the subvector of $z$ corresponding to the indices in $E$. 
%For a nonempty subset $E = \{i_1, \dots, i_k\}$ of $[d]$ and vector $z \in \R^d$, denote 
%$z[E] = (z_{i_1}, \dots, z_{i_k})$, where $i_j$ are ordered such that $i_j < i_{j+1}$ for all $j \in [k-1]$. 
Also denote $z[E^c] = z[[d] \setminus E]$. % when the latter is defined. 
For a random vector $Z \in_R \mathbb{R}^d$, define random subvectors $Z[E]$ and $Z[E^c]$ similarly. 
\end{notation}

%We begin with an elementary lemma. 
\begin{lemma}\label[lemma]{lem:Rd_projection_works}
Let $Y = (Y_1, \dots, Y_d)$ be an independent symmetrizer for $X = (X_1, \dots, X_d)$ in $\R^d$. 
Then, for any nonempty $E \subseteq [d]$, $Y[E]$ is an independent symmetrizer for $X[E]$. 
In particular, for all $i \in [d]$, $Y_i$ is an independent symmetrizer for $X_i$. 
\begin{proof}
Fix $E = \{ i_1, \dots, i_k \} \subsetneq [d]$. 
%If $E = [d]$ there is nothing to show, so we assume the inclusion is proper. 
%Write $k = \abs{E}$ and fix $z[E]$. 
Clearly, for any $z \in \R^d$
\begin{align*}
f_{X + Y}(z)
&= f_{(X + Y)[E]}(z[E]) f_{(X + Y)[E^c] \mid (X + Y)[E]}(z[E^c] \mid z[E]) .
\intertext{Since $Y$ is an independent symmetrizer of $X$, we have 
$f_{X + Y}(z) = f_{X + Y}(-z)$ and}
f_{(X + Y)[E]}(z[E]) &f_{(X + Y)[E^c] \mid (X + Y)[E]}(u \mid z[E]) \\
&= f_{(X + Y)[E]}(-z[E]) f_{(X + Y)[E^c] \mid (X + Y)[E]}(u \mid -z[E]) .
\intertext{where we have written $u = z[E^c]$. 
Now, write $S = \supp((X + Y)[E^c])$ and sum over $u \in S$:}
%\sum_{u \in S} f_{(X + Y)[E]}(z[E]) &f_{(X + Y)[E^c] \mid (X + Y)[E]}(u \mid z[E]) \\
%&= \sum_{u \in S} f_{(X + Y)[E]}(-z[E]) f_{(X + Y)[E^c] \mid (X + Y)[E]}(u \mid -z[E]) 
%\shortintertext{thus}
 f_{(X + Y)[E]}(z[E]) &\sum_{u \in S} f_{(X + Y)[E^c] \mid (X + Y)[E]}(u \mid z[E]) \\
&= f_{(X + Y)[E]}(-z[E]) \sum_{u \in S} f_{(X + Y)[E^c] \mid (X + Y)[E]}(u \mid -z[E]) .
\end{align*}
%For any fixed $r \in \R^k$, the conditional distribution function $f_{(X + Y)[E^c] \mid (X + Y)[E]}(\cdot \mid r)$ always sums to $1$, so 
Thus
$ f_{(X + Y)[E]}(z[E]) = f_{(X + Y)[E]}(-z[E]) . $
Independence of $X[E]$ and $Y[E]$ is inherited from independence of $X$ and $Y$. 
%Thus $Y[E]$ is an independent symmetrizer of $X[E]$. 
\end{proof}
\end{lemma}

\begin{theorem}\label[theorem]{thm:hypercube_entropy}
Let $X = (X_1, \dots, X_d) \in_R \{\pm1\}^d$ with every $X_i$ asymmetric. 
Let $Y = (Y_1, \dots, Y_d) \in \calY(X)$. 
Then, $H(Y) \ge \frac{1}{d} H(X)$, with equality if and only if the following conditions are all satisfied: 
\begin{enumerate}[(i)]
\item There exists an index $i \in [d]$ such that both $f_{Y_i} = f_{-X_i}$ and $H(Y) = H(Y_i)$,
\item $\abs{p_1-\frac{1}{2}} = \dots = \abs{p_d-\frac{1}{2}}$, 
\item the random variables $X_i$ are mutually independent. 
\end{enumerate}
\begin{proof}
Let $j \in \arg\max_{i \in [d]} H(X_i) = \arg\min_{i \in [d]} \abs{p_i-\frac{1}{2}}$. 
Note that $Y_j$ is an independent symmetrizer of $X_j$ by \cref{lem:Rd_projection_works}. 
Thus we may apply \cref{thm:HY_exceeds_HX} to $X_j$ and $Y_j$. 
Then, 
\begin{align}
H(Y) &\ge H(Y_j) \label[ineq]{ineq:hypercube_entropy_a}\\
&\ge H(X_j) \label[ineq]{ineq:hypercube_entropy_2}\\
%&= \frac{1}{d} \left( d \max_{i \in [d]} H(X_i) \right) \\
&\ge \frac{1}{d} \sum_{i=1}^d H(X_i) \label[ineq]{ineq:hypercube_entropy_3}\\
&\ge \frac{1}{d} H(X) \label[ineq]{ineq:hypercube_entropy_4}
\end{align}
where we used the entropy chain rule in (\ref{ineq:hypercube_entropy_a}), 
\cref{thm:HY_exceeds_HX} in (\ref{ineq:hypercube_entropy_2}), 
and the entropy chain rule and the fact that conditioning reduces entropy in (\ref{ineq:hypercube_entropy_4}). 

Equality is attained in $H(Y) \ge \frac{1}{d} H(X)$ if and only if it is attained in each of (\ref{ineq:hypercube_entropy_a}-
%,ineq:hypercube_entropy_2,ineq:hypercube_entropy_3,
\ref{ineq:hypercube_entropy_4}). 
The equality condition of (\ref{ineq:hypercube_entropy_4}) is $H(X) = \sum_{i=1}^d H(X_i) $
which is in turn equivalent to mutual independence of the $X_i$ (see e.g. \cite[Theorem 1.4(g)]{PW25:book}). 
The equality condition of (\ref{ineq:hypercube_entropy_3}) is exactly (ii). 
Now consider the conditions

\begin{itemize}
\item[(a)] $f_{Y_j} = f_{-X_j}$,
\item[(b)] $H(Y) = H(Y_j)$.
\end{itemize}
Equality is attained in \eqref{ineq:hypercube_entropy_2} exactly when $H(X_j) = H(Y_j)$, which, in light of \cref{thm:HY_exceeds_HX}, is equivalent to (a). 
Equality is attained in \eqref{ineq:hypercube_entropy_a} exactly when (b) is true. 
%Thus $H(Y) = \frac{1}{d} H(X)$ if and only if conditions $(a-b)$ and $(ii-iii)$ are fulfilled. 

We claim that (a-b,ii-iii) is equivalent to (i-iii). 
Clearly (a-b) implies (i); we need only show the converse. 
If (i-iii) is fulfilled, then
$$ H(X_j) = H(X_i) = H(Y_i) = H(Y) \ge H(Y_j) \ge H(X_j) $$
where we have used (in order), condition (ii), condition (i), condition (i) again, the entropy chain rule, and \cref{thm:HY_exceeds_HX}. 
Thus, the last two inequalities are equalities, which forces (b) and also forces $H(Y_j) = H(X_j)$. 
The equality condition of \cref{thm:HY_exceeds_HX} (applied to $X_j$ and $Y_j$) then implies (a). 
\end{proof}
\end{theorem}

\begin{remark}
We do not know whether the equality conditions of \cref{thm:hypercube_entropy} 
can actually happen. 
The issue is the possible nonexistence of ``totally dependent'' symmetrizers $Y$ with $H(Y) = H(Y_i)$ for some index $i \in [d]$. 
On the other hand, symmetrizers $Y \in \calY(X)$ with some dependence between coordinates may be seen to exist by the example $f_Y = \hatf_z$ from \cref{lem:hatf_are_symmetrizers} (for $z \ne 0 \in \mathbb{R}^d$). 
%In particular, we suspect that \cref{thm:hypercube_entropy} can be tightened under the stated assumptions to say that $H(Y) \ge cH(X)$ for some constant or constants $c = c(d) > \frac{1}{d}$. 
\end{remark}

Introducing the additional assumption of independent coordinates $Y_i$ produces the following corollary. 
\begin{corollary}
Let $X = (X_1, \dots, X_d) \in_R \{\pm1\}^d$ with asymmetric coordinates $X_i$. 
Let $Y = (Y_1, \dots, Y_d) \in \calY(X)$ with independent coordinates $Y_1, \dots Y_d$. 
Then, $H(Y) \ge H(X)$, with equality if and only if the following conditions are both satisfied: 

\begin{enumerate}[(i)]
\item For all $i \in [d]$, $f_{Y_i} = f_{-X_i}$,
\item the random variables $X_i$ are mutually independent. 
\end{enumerate}

Also, if $X_1 = \pm X_2 = \dots = \pm X_d$ (with any choice of signs), then $H(Y) \ge dH(X)$ with equality if and only if (i) is satisfied. %for all $i \in [d]$, $f_{Y_i} = f_{-X_i}$. 
\begin{proof}
Clearly 
\[ H(Y) 
%= \sum_{i\in[d]} H(Y_i \mid Y_1, \dots, Y_{i-1}) 
= \sum_{i \in [d]} H(Y_i)
\ge \sum_{i \in [d]} H(X_i)
\ge \sum_{i \in [d]} H(X_i \mid X_1, \dots X_{i-1})
= H(X). \]
%from the entropy chain rule and the inequality in $\mathbb{R}$. 
Equality conditions are clear from the analysis in the proof of \cref{thm:hypercube_entropy}. 
\end{proof}
\end{corollary}

Now we turn to variance symmetrization resistance. 
%Denote the $k \times k$ all-ones matrix by $J_k$. 
%Note that $J_k$ is positive-semidefinite: 
%indeed, $J_k = B^TB$, where $B$ % = [b_{ij}] \in \mathbb{R}^{k \times k}$ 
%is the $k \times k$ matrix having all entries in the first row equal to one, and all other entries zero. 

\begin{lemma}\label[lemma]{lem:psd_diagonal_dominance}
Let $A = [a_{ij}] \in \mathbb{R}^{d \times d}$ be positive semidefinite. 
Then, for all $i, j \in [d]$, 
\begin{equation} \abs{a_{ij}} \le  \frac{1}{2}(a_{ii} + a_{jj}) =  \frac{1}{2}(\abs{a_{ii}} + \abs{a_{jj}}). \label[ineq]{ineq:psd_diagonal_dominance} \end{equation}
Moreover, for $i \ne j$, the following conditions are equivalent: 
\begin{enumerate}[(i)]
\item Equality is attained in (\ref{ineq:psd_diagonal_dominance}). 
\item  The $2 \times 2$ principal submatrix $A[\{i,j\}]$ is not positive definite. 
\item The $2 \times 2$ principal submatrix $A[\{i,j\}]$ is not full rank. 
\item $a_{ii} = a_{jj} = \abs{a_{ij}}$. 
\end{enumerate}

\begin{proof}
We first note that for all $i \in [d]$, $e_i^T A e_i = a_{ii} \ge 0$ 
(with any equality implying $A$ is not positive definite), 
so $a_{ii} = \abs{a_{ii}}$. 
We compute 
\[
0 \le (e_i - e_j)^T A (e_i - e_j) 
%= a_{ii} - a_{ji} - a_{ij} + a_{jj} 
= a_{ii} + a_{jj} - 2a_{ij} 
\]
and similarly
\[
0 \le (e_i + e_j)^T A (e_i + e_j) 
= a_{ii} + a_{jj} + 2a_{ij} 
\]
so that $2\abs{a_{ij}} \le a_{ii} + a_{jj}$. 

Finally, when $i \ne j$, equality in \eqref{ineq:psd_diagonal_dominance} implies $(e_i - e_j)^T A (e_i - e_j) = 0$ or $(e_i + e_j)^T A (e_i + e_j) = 0$, hence $A[\{i,j\}]$ is not positive-definite and therefore not full rank. 
%hence $a_{ii} + a_{ij} = \pm 2 a_{ij}$. 
Then, $\det(A[\{i,j\}])=0$, so that $\abs{a_{ij}} = \sqrt{a_{ii}a_{jj}}$. 
In turn this forces $2\sqrt{a_{ii}a_{jj}} = a_{ii} + a_{jj}$, which is to say the vector $(a_{ii},a_{jj})$ attains equality in the arithmetic-geometric mean inequality.
Therefore $a_{ii}=a_{jj}$, and in turn $\abs{a_{ij}} = \sqrt{a_{ii}^2} = a_{ii}$ as well. 
%Thus $A[\{i,j\}] = a_{ii} J_2$. 
That (iv) implies (i) is obvious. 
%Conversely, if $A[\{i,j\}]$ is a scalar multiple of $J_2$ we clearly have $2a_{ij} = a_{ii} + a_{jj}$. 
\end{proof}
\end{lemma}

\begin{definition}
We define the matrix norm $\norm{\cdot}_{1,1}$ by 
\[ \norm{A}_{1,1} = \sum_{i=1}^d \sum_{j=1}^d \abs{a_{ij}}  .\]
for $A \in \R^{d \times d}$ \cite[page 341]{HJ12:book}. 
\end{definition}
%The functional $\norm{\cdot}_{1,1}$ is a matrix norm 

Denote the trace of a matrix $A$ by $\tr(A)$. 

\begin{lemma}\label[lemma]{lem:d_tr_dominates_oneone_norm}
Let $A \in \R^{d \times d}$ be positive semidefinite. 
Then $d\cdot\tr(A) \ge \norm{A}_{1,1}$, with equality iff every entry of $A$ has the same absolute value. 

\begin{proof}
By \cref{lem:psd_diagonal_dominance},
%Since $A$ is real symmetric, we can bound the sum of the absolute values of the off-diagonal entries in $A = [a_{ij}]$ is bounded: 
%\[{\color{blue}
%\sum_{i=1}^d\sum_{j\ne i} \abs{a_{ij}} 
%\le \frac{1}{2}\sum_{i=1}^d\sum_{j\ne i} (a_{ii} + a_{jj}) 
%= \sum_{i=1}^d\sum_{j\ne i} a_{ii} 
%= (d-1) \sum_{i=1}^d a_{ii} , }\]
\[
\sum_{\{i,j\} \in \binom{[d]}{2}} \abs{a_{ij}} 
\le \frac{1}{2}\sum_{\{i,j\} \in \binom{[d]}{2}} (a_{ii} + a_{jj}) 
= \sum_{\{i,j\} \in \binom{[d]}{2}} a_{ii} 
= \sum_{i=1}^d\sum_{j\ne i} a_{ii} 
= (d-1) \sum_{i=1}^d a_{ii} .
\]
By the same lemma, equality is attained if and only if $\abs{a_{ij}} = a_{ii} = a_{jj}$ for every $\{i,j\} \in \binom{[d]}{2}$. 

Finally, by the nonnegativity of the diagonal entries $a_{ii}$ we have 
\[ \norm{A}_{1,1} \le \left( (d-1) \sum a_{ii} \right) + \left(\sum a_{ii} \right) = d\cdot \tr(A),\]
with equality if and only if $A$ is a nonnegative scalar multiple of $J_d$. 
\end{proof}
\end{lemma}

\begin{theorem}\label{thm:hypercube_variance}
Let $Y = (Y_1, \dots, Y_d)$ and $X = (X_1, \dots, X_d)$ be as in \cref{thm:hypercube_entropy}. 
Then, $\norm{\Cov(Y)}_{1,1} \ge \frac{1}{d} \norm{\Cov(X)}_{1,1}$
%with equality only if the $Y_i$ are uncorrelated. 
with equality if and only if the following conditions are all satisfied:
\begin{enumerate}[(i)]
\item[(i)] The $Y_i$ are uncorrelated.
\item[(ii)] For all $i \in [d]$, $f_{Y_i} = f_{-X_i}$. 
\item[(iii)] $\Cov(X) = \Var(X_i)J_d$, for all $i \in [d]$, where $J_d$ is a p.s.d. matrix with all entries in $\{\pm1\}$. 
\end{enumerate}
\begin{proof}
We compute
\begin{align}
\norm{\Cov(Y)}_{1,1} 
&\ge \tr(\Cov(Y)) \label[ineq]{ineq:hypercube_covariance_1}\\
%&= \sum_i \Var(Y_i) \\
&\ge %\sum_i \Var(X_i) 
%= 
\tr(\Cov(X)) \label[ineq]{ineq:hypercube_covariance_2}\\
&\ge \frac{1}{d} \norm{\Cov(X)}_{1,1}. \label[ineq]{ineq:hypercube_covariance_3}
\end{align}
\Cref{ineq:hypercube_covariance_1} %follows from neglecting the off-diagonal terms in the sum and 
attains equality exactly when the off-diagonal entries in $\Cov(Y)$ are all zero, i.e., the $Y_i$ are uncorrelated. 
\Cref{ineq:hypercube_covariance_2} follows from $d$ applications of \cref{thm:VarY_exceeds_VarX} and the fact that $\tr(\Cov(Y)) = \sum_i \Var(Y_i)$ (and similar for $X$); 
thus equality is attained in \eqref{ineq:hypercube_covariance_2} exactly when 
%it is tight exactly when equality is attained in each of the $d$ applications of \cref{thm:VarY_exceeds_VarX}, i.e., when 
$f_{Y_i} = f_{-X_i}$ for all $i \in [d]$. 
Finally, \eqref{ineq:hypercube_covariance_3} follows from \cref{lem:d_tr_dominates_oneone_norm}. 
It is possible for equality to be attained in \eqref{ineq:hypercube_covariance_3}; one example is $X_1 = \dots = X_d$. 
\end{proof}
\end{theorem}

\begin{remark}
\cref{thm:hypercube_entropy,thm:hypercube_variance} may be seen to apply to $X \in_R S$ where $S \subsetneq \{\pm1\}^d$. 
Thus both inequalities may be made to apply to, e.g., the vertex set of
%of any polytope embedded in the hypercube, e.g. 
a $d$-dimensional simplex embedded in the hypercube. %$\Delta_d \subsetneq \{\pm1\}^d$ in $\mathbb{R}^d$. 
In the case of \cref{thm:hypercube_entropy}, this also applies to invertible affine transformations of $S$ (\cref{lem:affine_invariance}). 

It seems likely that \cref{thm:hypercube_entropy,thm:hypercube_variance} can be strengthened under additional restrictions of this type.
\end{remark}

\begin{remark}
The equality conditions of \cref{thm:hypercube_entropy,thm:hypercube_variance} 
exhibit a curious antisymmetry: 
equality is attained in \cref{thm:hypercube_entropy} roughly when the coordinates of $Y$ are totally dependent and the coordinates of $X$ are totally independent, 
but equality is attained in \cref{thm:hypercube_variance} when the dependence and independence are reversed.  
We suggest that a careful analysis of the behavior of the covariance matrices under different matrix norms could shed light on this situation. 
\end{remark}

We conclude with the following, which shows that the hypothesis that (an appreciable fraction of) the \emph{coordinates} $X_i$ be asymmetric is essential to \cref{thm:hypercube_entropy,thm:hypercube_variance}.
\begin{proposition}
For every $c > 0$ and integer $d \ge 2$ there exists an asymmetric $X = (X_1, \dots, X_d) \in_R \{\pm1\}^d$ and an independent symmetrizer $Y = (Y_1, \dots, Y_d)$ of $X$ such that 
$H(Y) < cH(X)$. 
Moreover, there exist $X'$ and $Y'$ satisfying the same assumptions such that
$\Var(Y') < c\Var(X')$.

\begin{proof}
Clearly we may assume $c<1$. 
Take $X_1 \sim \Bern(p)$ where $p$ is close enough to zero (or one) such that $H(X_1) = c$. 
Take $Y_1 \sim \Bern(1-p)$. 
Take $X_2, \dots, X_d \sim \Bern(\frac{1}{2})$ and $Y_2 = \dots Y_d = 0$ 
(in the case of entropy we may also take $X_2, \dots, X_d \sim \Bern(p,-1,1)$ where $p=\pm1$ and $Y_j = -\mathbb{E}X_j$ for $j \ge 2$). 
Take $X_1$ and $X_2$ to be independent.
Then 
\[ H(Y) = H(Y_1) %Y_2 = 0 always
= c %f_{Y_1} = f_{-X_1}
< c(c+1) %c > 0
= c(H(X_1) + H(X_2)) %construction
= cH(X_1,X_2) %independence
\le cH(X). %dropping terms in entropy chain rule
\]
If we instead choose $p$ such that $\Var(X_1) = c$, and assume only that $X_1$ and $X_2$ are uncorrelated (rather than independent) then the same construction produces
\begin{align*}
\norm{\Cov(Y)}_{1,1} = \Var(Y_1) %Y_2 = 0 always
= c %f_{Y_1} = f_{-X_1}
< c(c+1) %c > 0
&= c(\Var(X_1) + \Var(X_2)) \\ %construction
&= c\norm{\Cov(X_1,X_2)}_{1,1} %X_1, X_2 uncorrelated
\le c\norm{\Cov(X)}_{1,1}. %dropping terms in 1,1 norm 
\qedhere
\end{align*}
\end{proof}
\end{proposition}

\section{Symmetrization in compact groups}\label{sec:groups}

\subsection{Compact groups}

We may also consider symmetrization in a group $G$ together with the Haar measure $\mu$; 
we do not require a specific normalization of $\mu$. 
Probability densities on $G$ are then defined with respect to $\mu$. 
The variance of a $G$-valued random variable is, in general, not meaningful (the elements of $G$ need not be numbers), so we exclusively consider entropic symmetrization resistance with respect to the entropy 
\[ h(f) \coloneqq -\int_G f(w) \log f(w) d\mu(w) \]
for probability densities $f$ on $G$. 
When $G$ is explicitly assumed to be finite, we will continue to write $H$ for the usual Shannon entropy on $G$, which coincides with this definition. 

Denoting $\mathcal{P}(G)$ for the space of probability densities with respect to $\mu$. 
When $G$ is nonabelian we note the existence of two spaces of symmetrizer distributions:
\[ \calY_L^G = \{ g \in \mathcal{P}(G) \mid \forall z \in G \quad (g \convolve f)(z^{-1}) = (g \convolve f)(z) \}\phantom{.} \]
and
\[ \calY_R^G = \{ g \in \mathcal{P}(G) \mid \forall z \in G \quad (f \convolve g)(z^{-1}) = (f \convolve g)(z) \} . \]
Thus, for example, the triviality ``\!$f \in \calY(g)$ iff $g \in \calY(f)$'' becomes ``\!$f \in \calY_R^G(g)$ iff $g \in \calY_L^G(f)$'' in the nonabelian context. 
However, the two spaces have the following relationship. 
\begin{lemma}
Let $G$ be compact with left Haar measure $\mu$ and let $G^{\op}$ denote the opposite group of $G$. 
Take the right Haar measure on $G^{\op}$ to be $\mu$. 
Then, 
\[ \calY_L^G(f) = \calY_R^{G^{\op}}(f) , \]
where $f$ is any probability density on $G$. 

\begin{proof}
Denote the group operation on $G$ by $\cdot$ and on $G^{\op}$ by $\cdot^{\op}$. 
Note that for any $\mu$-measurable $A \subseteq G^{op}$ and $x \in G^{op}$, 
\[ \mu(A\cdot^{\op} x) 
= \mu(\{ a \cdot^{\op} x \mid a \in A \})
= \mu(\{ xa \mid a \in A \})
= \mu(x\cdot A) 
= \mu(A)
\]
So indeed $\mu$ is a right Haar measure on $G^{\op}$. 

Now take $f,g$ to be probability densities with respect to $\mu$ in $G$. 
Since $G$ and $G^{\op}$ have the same topology, $f,g$ are also probability densities on $G^{\op}$. 
Thus, 
\begin{align*}
\calY_R^{G^{\op}}(f)
&= \{ g \mid \forall y \quad (f \convolve^{\op} g)(y) 
                           = (f \convolve^{\op} g)(y^{-1})  \} \\
&= \left\{ g \mid \forall y \quad \int_{G^{\op}} f(w) g(w^{-1}y) \,d\mu(w) 
                           = \int_{G^{\op}} f(w) g(w^{-1}y^{-1}) \,d\mu(w) \right\} \\
%&= \left\{ g \mid \forall y \quad \int_{G^{\op}} f(ys^{-1}) g(s) \,d\mu(s) 
%                           = \int_{G^{\op}} f(y^{-1}s^{-1}) g(s) \,d\mu(s) \right\} \\
&= \left\{ g \mid \forall y \quad \int_{G} g(s) f(ys^{-1}) \,d\mu(s) 
                           = \int_{G} g(s) f(y^{-1}s^{-1}) \,d\mu(s) \right\} \\
&= \{ g \mid \forall y \quad (g \convolve f)(y) 
                           = (g \convolve f)(y^{-1})  \} \\
&= \calY_L^{G}(f) .\qedhere
\end{align*}
\end{proof}
\end{lemma}
Thus we define $\calY \coloneqq \calY_L^G$ for nonabelian $G$. %, and note that any reader interested in $\calY_R$ may consider the result for $\calY_L$ and the opposite group of $G$. 

Additional peculiarities emerge in the group context. 
First, the case $G=\mathbb{Z}_2^n$ ($n \ge 1$) is trivial because all non-identity elements are involutions; that is, every probability distribution on $\mathbb{Z}_2^n$ is symmetric. 
Also, in the case $G=\mathbb{Z}_p$ where $p$ is a prime, multiplicative and additive inverses exist and we may apply \cref{lem:affine_invariance}; thus we may be tempted to proceed along the lines of \cref{sec:reals}. 
However, note that when $G$ is finite, there is hope of enumerating the extreme points and applying the Krein-Milman theorem, an attractive approach which was off limits to us in the case of the reals. 
We thus begin with some general observations, beginning with the existence of certain convenient symmetrizers. 
\begin{lemma}\label[lemma]{lem:f_- and u are symmetrizers}
Let $G$ be a compact group with Haar measure $\mu$. 
Writing $f_-(\cdot) = f(\cdot^{-1}) \in \mathcal{P}(G)$, and $u = \frac{1}{\mu(G)} \in \mathcal{P}(G)$, we have $\{f_-,u\} \subseteq \calY(f)$. 

\begin{proof}
We observe that 
\begin{align*}
\calY = \calY_L 
&= \{ g \in \mathcal{P}(G) \mid \text{for all $z \in G$, } (g \convolve f)(z) = (g \convolve f)(z^{-1}) \} \\
&= \{ g \in \mathcal{P}(G) \mid \text{for all $z \in G$, } \int_{G} g(w) (f(w^{-1}z) - f(w^{-1}z^{-1}) ) \,d\mu(w) = 0 \} ,
\end{align*}
thus clearly %$g = u$ is a solution of the equations defining $\calY$ and
$u \in \calY$. For $f_-$, compute
\begin{align*}
(f_- \convolve f)(z^{-1}) 
&= \int_G f_-(w) f(w^{-1}z^{-1}) \,d\mu(w) \\
&= \int_G f_-(zw)f(w^{-1}) \,d\mu(w) \\
&= \int_G f_-(u)f(u^{-1}z) \,d\mu(u) %\\ % zw = u, so w^{-1} = u^{-1}z
= (f_- \convolve f)(z). \qedhere
\end{align*}
\end{proof}
\end{lemma}

\begin{theorem}\label{thm:no_symmetrizers_in_interior}
Let $G$ be a compact group with Haar measure $\mu$ and let $f$ be a probability density with respect to $\mu$. 
If $\inf_{x \in G} f(x) > 0$, 
then $f$ is \emph{not} symmetrization resistant in $G$. 

\begin{proof}
By \cref{lem:f_- and u are symmetrizers}, $f_-$ and the uniform $u$ are both in $\calY(f)$. 
Without loss of generality we take $\mu(G)=1$. 

In the case $h(f) = h(u)$, we have $h(u) - h(f) = \DKL{f}{u} \ge 0$, with equality if and only if $f = u$ $\mu$-a.e.. 
Thus $f \convolve g = u \convolve g = u$ for any probability density $g$ on $G$, so this $f$ is not symmetrization resistant. 
Thus we assume $h(f) < h(u)$. 

Define $\varepsilon \coloneqq \inf f_- = \inf f  \in (0,1)$ and $b = \frac{f_- - \varepsilon u}{1-\varepsilon}$. 
Since $b \ge 0$, $b$ is a probability measure, and since the symmetry equations are linear, $b \in \calY(f)$. 
Thus $f_- = (1-\varepsilon)b + \varepsilon u$. 
Applying concavity of entropy, 
\begin{align*} h(f) 
= h(f_-) 
= h(\varepsilon u + (1-\varepsilon) b)
&\ge \varepsilon h(u) + (1-\varepsilon) h(b) \\
&> \varepsilon h(f) + (1-\varepsilon) h(b) 
\end{align*}
so that $h(f) > h(b)$. 
\end{proof}
\end{theorem}

Note that, when $G$ is finite, $\inf f$ is a minimum and the above theorem says that there are no symmetrization-resistant distributions in the interior of the probability simplex.

%The next lemma shows that, in order to check symmetrization resistance for a distribution $f$ on a group $H$ embedded in a larger group $G$, it is necessary to consider the symmetrizers of $f$ in $H$. 
We now turn our attention the behavior of the symmetrization problem with respect to subgroups. 
\begin{lemma}
Let $f$ be a probability density on $G$ such that $\supp(f)$ is contained in some proper subgroup $H$ of $G$ with $\mu(H) > 0$ and Haar measure $\mu_H$ defined to be the Haar measure $\mu$ of $G$ restricted to $H$. 

Then if $g'$ is a symmetrizer for $\restr{f}{H}$ (the restriction of $f$ to $H$) in $H$, 
\[ g \coloneqq \begin{cases} g'(x) & x \in H \\
0 & \text{otherwise}
\end{cases} \]
is a symmetrizer of $f$ in $G$. 

\begin{proof}
Since $g'$ is a symmetrizer for $\restr{f}{H}$, 
\begin{align*}
(g \convolve f)(x) 
&= \int_{G} g(r)f(r^{-1}x) \,d\mu(r)\\ %integrand is zero when $x \not\in H$ by defn of $g$
&= \int_{H} g(r)f(r^{-1}x) \,d\mu(r)
= (g' \convolve \restr{f}{H})(x) 
= (g' \convolve \restr{f}{H})(x^{-1}) 
%= \dots 
= (g \convolve f)(x^{-1}). \qedhere
\end{align*}
\end{proof}
\end{lemma}

We also note that, if $G$ is a direct sum of two smaller groups, then symmetrization of product densities on $G$ is at least as easy as (and symmetrization resistance is at least as hard as) symmetrization on the smaller groups. 
Here the independence assumption $f = f_H f_I$ plays a similar role to the independence assumptions in the results in \cref{sec:hypercube}. 
\begin{lemma}\label[lemma]{lem:direct_sum_lifting}
Take $G = H \otimes I$ with respective Haar measures $\mu_G$, $\mu_H$, and $\mu_I$ such that $\mu_G = \mu_H \otimes \mu_I$. 
%Further let the measures $\mu_G,\mu_H,\mu_I$ satisfy
%$\mu_G(S \times T) = \mu_H(S) \mu_I(T)$ for all $\mu$-measurable $S \times T$. 
Let $f_H$ and $f_I$ be probability densities with respect to $\mu_H$ and $\mu_I$, respectively. 
Then $(g_H,g_I) \in \calY(f_H) \times \calY(f_I)$ implies $g_Hg_I \in \calY(f_Hf_I)$.  
\begin{proof}
For all $a \in H$, $(g_H \convolve f_H)(a) = (g_H \convolve f_H)(a^{-1})$, and for all $b \in I$, $(g_I \convolve f_I)(b) = (g_I \convolve f_I)(b^{-1})$, so
\[
(g_H \convolve f_H)(a)(g_I \convolve f_I)(b)
= (g_H \convolve f_H)(a^{-1})(g_I \convolve f_I)(b^{-1}). \]
Applying Fubini-Tonelli,
%\int_H g_H(r)f_H(r^{-1}a)\,d\mu_H(r) \int_I g_I(s)f_I(s^{-1}b) \,d\mu_I(s)
%&= \int_H g_H(r)f_H(r^{-1}a^{-1})\,d\mu_H(r) \int_I g_I(s)f_I(s^{-1}b^{-1}) \,d\mu_I(s) \\
\begin{multline*}
\int_{G} g_H(r)g_I(s) f_H(r^{-1}a)f_I(s^{-1}b) \,d\mu_G((r,s)) \\
= \int_{G} g_H(r)g_I(s) f_H(r^{-1}a^{-1})f_I(s^{-1}b^{-1}) \,d\mu_G((r,s)) .
\end{multline*}
Thus, writing $f \coloneqq f_H f_I$ and $g \coloneqq g_H g_I$, 
\[
\int_G g(r,s) f(r^{-1}a,s^{-1}b) \,d\mu_G((r,s))
= \int_G g(r,s) f(r^{-1}a^{-1},s^{-1}b^{-1}) \,d\mu_G((r,s)) ,
\]
so $(g \convolve f)(a,b) = (g \convolve f)(a^{-1},b^{-1})$. % for all $(a,b)$. 
\end{proof}
\end{lemma}

The next example shows that while symmetrization resistance of $f_H$ on $H$ and $f_I$ on $I$ is necessary for the product $f(x,y) = f_H(x)f_I(y)$ to be symmetrization resistant on $H \oplus I$, it is not sufficient. 
\begin{example}
Consider the PMF $f(x,y) = f_1(x)f_1(y)$ on $\mathbb{Z}_3^2$, where $f_1(p) = (1-p,p,0)$ is symmetrization resistant on $\mathbb{Z}_3$ for $p \in (0,\frac{1}{3}) \cup (\frac{2}{3},1)$. 
Then, a straightforward computation shows that $g(x,y) = \frac{1}{3}\mathbb{1}_{x=y} \in \calY(f)$, and it is also readily verified that
\[ H(f) = -2(p \log p + (1-p)\log(1-p)) > \log_2(3) = H(g) \]
for $p \in (0.238468, 0.761532)$. 
%Let's check: writing $h \coloneqq 3 f \convolve g$ (to take advantage of the fact that $g=1/3$ whenever it is nonzero) 
%\[ h(-1,-1) = f(-1,-1) + f(0,0) = q^2 + p^2 = f(-1,-1) + f(0,0) = h(1,1) \]
%\[ h(-1,0) = f(-1,0) + f(0,1) + f(1,-1) = f(-1,0) = pq = f(0,-1) = f(0,-1) + f(1,0) + f(-1,1) = h(1,0) \]
%\[ h(0,-1) = f(0,-1) + f(1,0) + f(-1,1) = f(0,-1) = pq = f(-1,0) = f(0,1) + f(1,-1) + f(-1,0) = h(0,1) \]
%\[ h(1,-1) = f(1,-1) + f(-1,0) + f(0,1) = f(-1,0) = pq = f(0,-1) = f(-1,1) + f(0,-1) + f(1,0) = h(-1,1) \]
Thus, in particular, although by \cref{thm:Z3} $f_1$ is symmetrization resistant for $p \in [1/4,1/3]$, $f$ is \emph{not} symmetrization resistant at these same values of $p$. 
\end{example}

The next example shows that the assumption that $f$ is the product of $f_H$ and $f_I$ is necessary in \cref{lem:direct_sum_lifting}; that is, it is not sufficient merely for the marginals of $f$ to be $f_H$ and $f_I$. 
\begin{example}
Consider $f(i,j) = %0\mathbb{1}_{\{i=j=0\}} + 
p \mathbb{1}_{\{i=j=1\}} + q \mathbb{1}_{\{i=j=-1\}}$ with $p \in (\frac{1}{2},1)$ %(so  $q = 1-p$)
on $G = \mathbb{Z}_3^2$. 
Take $H = I = \mathbb{Z}_3$ and $g_H = g_I = %\mathbb{1}_{\{0\}} + 
q\mathbb{1}_{\{1\}} + p\mathbb{1}_{\{-1\}}$. 
Then 
\begin{align*}
(f \convolve (g_Hg_I))(-1,-1) 
&= \sum_{i,j} f(i,j) g_H(-1-i) g_I(-1-j) \\
%&= \sum_{i} f(i,i) g_H(-1-i) g_I(-1-i) \\
&= \sum_{i} f(i,i) g_H(-1-i)^2 \\
&= g_H(-1)^2f(0,0) + g_H(1)^2f(1,1) + g_H(0)^2f(-1,-1) \\
%&= 0              + q^2p           + 0 \\
&= q^2p .
\intertext{Similarly, }
(f \convolve (g_Hg_I))(1,1) 
&= \sum_{i,j} f(i,j) g_H(1-i) g_I(1-j) \\
%&= \sum_{i} f(i,i) g_H(1-i) g_I(1-i) \\
&= \sum_{i} f(i,i) g_H(1-i)^2 \\
&= g_H(1)^2f(0,0) + g_H(0)^2f(1,1) + g_H(-1)^2f(-1,-1) \\
%&= 0              + q^2p           + 0 \\
&= p^2q .
\end{align*}
Thus, defining the marginals $f_H$ and $f_I$ by $f_H(i) = \sum_{j \in \mathbb{Z}_3} f(i,j)$ and $f_I(j) = \sum_{i \in \mathbb{Z}_3} f(i,j)$, we have $g_H \in \calY(f_H)$ and $g_I \in \calY(f_I)$, but $g_Hg_I \not\in \calY(f)$. 
\end{example}

Finally we note that the symmetrization problem is invariant under automorphisms. 
\begin{lemma}\label[lemma]{lem:automorphisms}
Let $G$ be a compact group with Haar measure $\mu$ and let $\varphi$ be an automorphism of $G$. 
Then
\[ g \in \calY(f) \quad \text{ iff } \quad g \circ \varphi \in \calY(f \circ \varphi). \]

\begin{proof}
Clearly $g \in \calY(f)$ if and only if for all $x \in G$,
\begin{align*}
0 &= \int_G g(r) \big( f(r^{-1}x) - f(r^{-1}x^{-1}) \big) \,d\mu(r) 
\shortintertext{i.e.}
0 &= \int_G g(\varphi(\varphi^{-1}(r))) \big( f(\varphi(\varphi^{-1}(r^{-1}x))) - f(\varphi(\varphi^{-1}(r^{-1}x^{-1}))) \big) \,d\mu(r) ,
%\\ 0 &= \int_G g(\varphi(\varphi^{-1}(r))) \big( f(\varphi(\varphi^{-1}(r^{-1})\varphi^{-1}(x))) - f(\varphi(\varphi^{-1}(r^{-1})\varphi^{-1}(x^{-1}))) \big) \,d\mu(r) ,
\intertext{so that, writing $s = \varphi^{-1}(r)$ and $y = \varphi^{-1}(x)$,}
0 &= \int_G g(\varphi(s)) \big( f(\varphi(s^{-1}y)) - f(\varphi(s^{-1}y^{-1})) \big) \,d\mu(s) . \qedhere
\end{align*}
\end{proof}
\end{lemma}

\subsection{Finite cyclic groups}
In the rest of this section we investigate symmetrization in the groups $G= \mathbb{Z}_n$, where $n \ge 3$ is an integer. 

\begin{lemma}
When $G = \mathbb{Z}_{2q+1}$, the distributions which are symmetric about zero are exactly 
\[ \conv\left(\left\{\mathbb{1}_0, \frac{1}{2}(\mathbb{1}_{-1} + \mathbb{1}_{1}), \dots, \frac{1}{2}(\mathbb{1}_{-q} + \mathbb{1}_{q})\right\}\right) . \]
When $G = \mathbb{Z}_{2q}$, the distributions which are symmetric about zero are exactly 
\[ \conv\left(\left\{\mathbb{1}_0, \mathbb{1}_q, \frac{1}{2}(\mathbb{1}_{-1} + \mathbb{1}_{1}), \dots, \frac{1}{2}(\mathbb{1}_{-q+1} + \mathbb{1}_{q-1})\right\}\right) . \]
\begin{proof}
Immediate.
%Clearly all such distributions are symmetric about zero; conversely, $f \in_R \mathbb{Z}_{2q+1}$ is symmetric about zero iff $f(-k) = f(k)$ for all $k \in [q]$; thus $f$ lies in the described set. 
\end{proof}
\end{lemma}

\begin{lemma}\label[lemma]{lem:Zp_symmetrizer_space}
For $f \in_R \mathbb{Z}_{2q+1}$ ($q \ge 1$),
\[ \calY(f) = 
    \{ g \in_R \mathbb{Z}_{2q+1}
    \mid \forall x \in [q] \quad
    \sum_{i=-q}^q (f(x-i) - f(-x-i)) g(i) = 0 \}.
    \]
In particular, $\dim(\calY(f)) \ge q$. 
\begin{proof}
We identify the elements of $\mathbb{Z}_{2q+1}$ with $[-q,q] \cap \mathbb{Z}$ so that inverses are obvious. 
By direct computation, $\calY(f)$ is the set of random vectors $g  \in \mathbb{Z}_{2q+1}$  such that for all integers $x \in [-q,q]$, $(f \convolve g)(x) = (f \convolve g)(-x)$, i.e.
\[    \sum_{i=-q}^q (f(x-i) - f(-x-i)) g(i) = 0 \}.    \]
But the condition corresponding to $x=0$ is vacuous, and for all $x \in [q]$ the conditions produced by $x$ and $-x$ are identical. 

To see the dimension, note that the simplex of probability measures on $\mathbb{Z}_{2q+1}$ has dimension $2q$, and we have imposed $q$ linear constraints. 
\end{proof}
\end{lemma}

%\subsection[The cyclic group of order 3]{$\mathbb{Z}_3$}
\subsection[Exact characterization in the cyclic group of order 3]{Exact characterization of symmetrization resistance in $\mathbb{Z}_3$}

\begin{theorem}\label{thm:Z3}
The symmetrization resistant distributions in $G = \mathbb{Z}_3$ (with addition modulo three) are exactly the extreme two-thirds of every side of $\Delta_3$, except the corners, which are already symmetric (\cref{fig:Z3}). 

\begin{figure}
\centering
\begin{tikzpicture}[scale=5]

% Coordinates of equilateral triangle
\coordinate (A) at (0,0);
\coordinate (B) at (1,0);
\coordinate (C) at (0.5,{sqrt(3)/2});

% Triangle centroid
\coordinate (U) at ($ (A)!.333!(B) + (A)!.333!(C) $);

% Draw sides with thick lines
\draw[very thin] (A) -- (B);
\draw[very thin] (B) -- (C);
\draw[very thin] (C) -- (A);

% Mark middle thirds thinner
\foreach \P/\Q in {A/B,B/C,C/A} {
  \coordinate (M1) at ($ (\P)!1/3!(\Q) $);
  \coordinate (M2) at ($ (\P)!2/3!(\Q) $);
  \draw[very thick,red] (\P) -- (M1);
  \draw[very thick,red] (M2) -- (\Q);
}

% Draw line through centroid U at 20 degrees
\coordinate (L1) at (1,0.5);
\coordinate (L2) at ($ (L1)!2!(U) $);
\coordinate (fminus) at ($ (A)!0.115!(C) $);
\coordinate (f) at ($ (B)!0.115!(C) $);
%\coordinate (f) at ($(1,0)-fminus$);
\draw[dashed] (L1) -- (L2);

% Labels
\node[below left] at (A) {$(1,0,0)$};
\node[below right] at (B) {$(0,0,1)$};
\node[above right] at (f) {$f$};
\fill (f) circle (0.013);
\node[above left] at (fminus) {$f_-$};
\fill (fminus) circle (0.013);
\node[right] at (L1) {$\calY(f)$};
\node[above] at (C) {$(0,1,0)$};
\draw[fill=white] (A) circle[radius=0.013];
\draw[fill=white] (B) circle[radius=0.013];
\draw[fill=white] (C) circle[radius=0.013];
\fill (U) circle (0.01);
\node[below right] at (U) {$u$};

\end{tikzpicture}
\caption{The probability simplex for $\mathbb{Z}_3$. 
The red parts of the simplex are symmetrization resistant. 
A representative symmetrization-resistant PMF $f$ with symmetrizer space $\calY(f)$ is shown. }\label{fig:Z3}
\end{figure}

\begin{proof}
Indeed, by \cref{thm:no_symmetrizers_in_interior} it is sufficient to consider $f \in \partial\Delta_3$. 
By \cref{lem:Zp_symmetrizer_space}, $\dim(\calY(f)) \ge 1$, and clearly $\dim(\calY(f)) = 2$ iff $\calY(f) = \Delta_3$, i.e. $f$ is symmetric. 
Since only asymmetric $f$ may be symmetrization resistant, we therefore conclude that $\dim(\calY(f)) = 1$. 
Note as in the proof of \cref{thm:no_symmetrizers_in_interior} that $u = (\frac{1}{3},\frac{1}{3},\frac{1}{3}) \in \calY(f)$ and $f_- \in \calY(f)$. 
Since $f_-$ is on the boundary of the simplex, $u \ne f_-$ and we conclude that $\calY(f) = \aff(\{u,f_-\}) \cap \Delta_3$. 
Thus $\calY(f)$ intersects $\partial \Delta_3$ at exactly two points. 
One of these points is $f_-$; denote the other by $p$. 
Thus for $g \in \calY(f) = \conv(f_-,p)$ there exists $\lambda \in [0,1]$ such that $g = \lambda p + (1-\lambda)f_-$ and 
\[ H(g) \ge \lambda H(p) + (1-\lambda)H(f_-) ,\]
In turn, $H(f_-) = H(f)$ and $f$ is not symmetrization resistant if and only if $H(f) \ge \lambda H(p) + (1-\lambda)H(f_-)$, i.e. if and only if $H(f) \ge H(p)$. 
Note that since $f$ and $p$ are in $\partial\Delta$, each must have an entry equal to zero, i.e., both $f$ and $p$ are two-point distributions, albeit with different support. 
Explicitly, writing $a \le \frac{1}{2}$ and $1-a$ for the nonzero elements of $f$, a routine computation shows that the nonzero elements of $p$ are $\frac{1-2a}{2-3a}$ and $\frac{1-a}{2-3a}$. 
Thus $H(f) \le H(p)$ if and only if $1-a \ge \frac{1-a}{2-3a}$, 
%(see \cref{unique_large_mass} for relevant properties of Bernoulli entropy function)
%i.e. $2-3a \ge 1$,
%i.e. $3a \le 1$,
i.e. $a \le \frac{1}{3}$. 
Thus the set of symmetrization-resistant distributions in $\mathbb{Z}_3$ is exactly the extreme two-thirds of each side of the boundary of $\Delta_3$, except for the distributions which are already symmetric (in this case, just $\mathbb{1}_0$). 
\end{proof}
\end{theorem}

%\subsection[The cyclic group of order 4]{$\mathbb{Z}_4$}
\subsection[Exact characterization in the cyclic group of order 4]{Exact characterization of symmetrization resistance in $\mathbb{Z}_4$}

\begin{theorem}\label{thm:Z4}
The symmetrization resistant distributions on $\mathbb{Z}_4$ are exactly the two-point distributions supported on $\{0,1\}$, $\{1,2\}$, $\{2,3\}$, and $\{3,0\}$, as shown in \cref{fig:Z4 symmres}.

\begin{proof}
Let $f$ be a PMF on $\mathbb{Z}_4$ with support $F$. 
If $\abs{F}=1$, $f$ is symmetric. 
If $\abs{F}=4$, $f$ is not symmetrization resistant by \cref{thm:no_symmetrizers_in_interior}. 

If $\abs{F}=2$, Note that $F \coloneqq \supp(f)$ cannot be a coset of the subgroup $\{0,2\}$, 
otherwise there is a constant $c$ such that $F - \{c\} = \{0,2\}$ and
$f$ is symmetric. 
Since we do not have multiplicative inverses for all the nonzero elements in $\mathbb{Z}_4$, we do not have affine invariance; 
however, we do still have translation invariance ($g \in \calY(f)$ iff $g(\cdot +c) \in \calY(f(\cdot -c))$) 
and, by \cref{lem:automorphisms}, invariance under additive inverses ($g \in \calY(f)$ iff $g_- \in \calY(f_-)$). 
Thus the only example we need to consider is $F = \{0,1\}$ with $p \coloneqq f(1) \ge \frac{1}{2}$. 
Since $\calY(f)$ is formed by the intersection of a hyperplane with $\Delta_4$, $\dim(\calY(f)) \le 2$ and the elements $g$ of $\Ex(\calY(f))$ must have at most two nonzero entries; that is, for $G \coloneqq \supp(g)$, $\abs{G} \le 2$. 
Since there are no constants which symmetrize $f$, we consider the cases where $\abs{G} = 2$. 
When $G = \{0,1\}$ or $G = \{2,3\}$ it is readily observed that there is no solution.
When $G = \{0,2\}$ or $G = \{1,3\}$ the unique solution is $g(x) = \frac{1}{2}$ for $x \in G$; that is, $H(g) = 1 \ge H(f)$. 
When $G = \{0,3\}$ the unique solution is $g = f_-$, and
when $G = \{1,2\}$ the unique solution is $g = g^*$ with $g^*(1) = p$, $g^*(2) = 1-p$, $g^*(0) = g^*(3) = 0$. 
%Thus $g^* \ne f_-$ is a symmetrizer of $f$ with $H(g^*) = H(f)$. 
Therefore $f$ is symmetrization resistant as long as its support is not contained in a coset of $\{0,2\}$. %a proper subgroup of $\mathbb{Z}_4$.  

It remains only to consider the case $\abs{F} = 3$. 
By invariance under shifts, we may take $F = \{0,1,2\}$. 
PMFs with this support exist in the triangle depicted in \cref{fig:Z4_012}. 
Note that for any such $f$ we have $\{\frac{1}{2}\mathbb{1}_{\{0,2\}},\frac{1}{2}\mathbb{1}_{\{1,3\}}\} \subseteq \calY(f)$. 
We will treat in turn the elements of $A$,$B$, $C$, and $C'$ supported on $3$ points. 

The central region $B$ is the simplest: since $\Ex(B) = \{\frac{1}{2}\mathbb{1}_{\{0,1\}},\frac{1}{2}\mathbb{1}_{\{0,2\}},\frac{1}{2}\mathbb{1}_{\{1,2\}}\}$ consists of two-point distributions with entropy $1$, any $f \in B \setminus \Ex(B)$ is a nontrivial convex combination of the extreme points and therefore satisfies $H(f) > 1 = H(\frac{1}{2}\mathbb{1}_{\{0,2\}})$.
Therefore $\frac{1}{2}\mathbb{1}_{\{0,2\}}$ is an entropy-reducing symmetrizer and $f$ is not symmetrization resistant.  
%The bottom corner $\frac{1}{2}\mathbb{1}_{\{0,2\}}$ is already symmetric, hence not symmetrization resistant. 
Note that we treated the edges of $B$, and the remaining edges of $A$, $C$, and $C'$ consisted of two-point distributions, so we now need only treat the interiors $A^{\circ}$, $C^{\circ}$, and $(C')^{\circ}$. 

In the case $f = (1-a-b, a, b, 0) \in C^{\circ}$, we have $a+b < \frac{1}{2}$. 
The unique nontrivial symmetry equation satisfied by $g = (g_0, g_1, g_2, g_3) \in \calY(f)$ is 
\[ g_1f_2 + g_2 f_1 + g_3 f0= (f \convolve g)(-1) = f \convolve g)(1) = g_0 f_1 + g_1 f_0 + g_3 f_2, \]
which reduces to
\[ g_3(1-a-2b) + g_2a = g_1(1-a-2b) + g_0a. \]
In this last equation, every term is nonnegative, so we readily see that there are symmetrizers (which are necessarily extreme points) supported in $\{0,2\}$, $\{0,3\}$, $\{1,2\}$, $\{1,3\}$ (since the intersection of a hyperplane with the simplex $\Delta_4$ can have at most four corners, these are actually all of the extreme points of $f$, though we will not use this fact). 
The extreme point with support $\{1,2\}$ is $g = (0, \frac{a}{1-2b}, \frac{1-a-2b}{1-2b}, 0)$. 
Writing $H_B(p)$ for the entropy of a two-point random variable with parameter $p$, we have that 
\[ H(g) = H_B\bigg(\frac{1-a-2b}{1-2b}\bigg) \le H_B(1-a-2b) 
= H_B(f_0) < H(f), \]
thus $f$ is not symmetrization resistant. 

Symmetrizing $f = (b,a,1-a-b,0) \in C'$ (so $a + b \le \frac{1}{2}$) is equivalent to symmetrizing $f_- = (b,0,1-a-b,a)$ by invariance under reflection.
In turn, symmetrizing $f_-$ is equivalent to symmetrizing $f' = f_-(\cdot +2) = (1-a-b,a,b,0) \in C$. 
Thus $f \in (C')^{\circ}$ implies $f$ is not symmetrization resistant. 

Finally, consider $f = (a,1-a-b,b,0) \in A^{\circ}$. 
The argument is similar to $C^{\circ}$. 
Here we have $a + b \le \frac{1}{2}$, and by invariance under shifts and reflexions it is equivalent to symmetrize 
%$f(-\cdot) = (a,0,b,1-a-b)$ and
$f(-\cdot + 2) = (b,1-a-b,a,0)$, so without loss of generality we may take $b \ge a$. 
In the case $a=b$ there is nothing to show because $f$ is symmetric, so we may further assume $b > a$.  
The unique symmetry equation for $g \in \calY(f)$ is 
\[ 
%g_0f_3 + g_1f_2 + g_2f_1 + g_3f_0 = 
g_1f_2 + g_2f_1 + g_3f_0 = 
= (f \convolve g)(-1)
= (f \convolve g)(1)
= g_0f_1 + g_1f_0 + g_3f_2
%= g_0f_1 + g_1f_0 + g_2f_3 + g_3f_2
\]
which is equivalent to 
\[ g_2(1-a-b) + g_1(b-a) = g_0(1-a-b) + g_3(b-a) .\]
%As in the case $C^{\circ}$ 
Again all the terms in this equation are nonnegative. 
In particular, %there is a symmetrizer $g$ with support $\{0,1\}$ given by 
$g = (\frac{b-a}{1-2a},\frac{1-b-a}{1-2a},0,0)$ is a solution satisfying 
\[ H(g) 
= H_B\bigg(\frac{1-b-a}{1-2a}\bigg) 
< H_B(1-b-a) 
= H_B(f_1) 
< H(f). \qedhere
\]
%so $f$ is not symmetrization resistant. 
\end{proof}
\end{theorem}

Kagan et al \cite[Theorem 4]{KMSVV99} showed that there exists an asymmetric distribution in $\mathbb{R}$ which has at least two distinct minimum-variance symmetrizers. 
Though their example is not readily adapted to the case of entropy, we obtain the following. 
\begin{theorem}
In $\mathbb{Z}_3$ and $\mathbb{Z}_{4q}$ with $2 \nmid q$, there exist symmetrization resistant random variables for which the minimum-entropy independent symmetrizer is not unique. 

\begin{proof}
In $\mathbb{Z}_3$, writing PMFs as $z = (z(-1),z(0),z(1))$, we see that $f = \frac{1}{3}(1,2,0)$ (which is symmetrization resistant by \cref{thm:Z3}) has $\frac{1}{3}(2,0,1)$ and $\frac{1}{3}(0,2,1)$ as symmetrizers. 

%In $\mathbb{Z}_4$, \cref{thm:Z4} provides that the PMF $f$ with $f(0) = f(1) = \frac{1}{2}$ has equal-entropy symmetrizers $g$ and $h$ defined by $g(0) = g(2) = \frac{1}{2}$ and $h(1) = h(3) = \frac{1}{2}$.  

In $\mathbb{Z}_{4q}$,
$f = \frac{1}{2}(\mathbb{1}_{0} + \mathbb{1}_{q})$ 
has equal-entropy symmetrizers 
$f_{-} = \frac{1}{2}(\mathbb{1}_{0} + \mathbb{1}_{3q})$
and
$g = \frac{1}{2}(\mathbb{1}_{0} + \mathbb{1}_{2q})$. 
%Per \cref{thm:Z4}, in $\mathbb{Z}_4$ these are minimum-entropy symmetrizers. 
%
%When $q$ is not divisible by two, we 
Note that $h \in \calY(f)$ if and only if for all $s \in [4q]$
\begin{equation}\label{eq:symm_4q_special}
h(s) + h(s-q) = h(-s) + h(-s-q) 
\end{equation}
so that if $h(s) > \frac{1}{2}$ for some $s$, we have 
\[ \frac{1}{2} < h(s) \le h(-s) + h(-s-q) \]
and therefore 
\[ 1 < h(s) + h(-s) + h(-s-q) \]
so that either $s = -s$ (so $s = 0$) or $s = -s-q$ (so $2s = q$). 
Since $2$ does not divide $q$, the latter cannot happen and we need only consider $s=0$.
Evaluating \eqref{eq:symm_4q_special} at $s=q$ yields
\[ h(q) + h(0) = h(-q) + h(-2q) \]
so that 
\[ \frac{1}{2} < h(0) \le h(-q) + h(-2q) \]
and since $0$, $-q$, and $-2q$ are distinct modulo $4q$, we obtain 
\[ 1 < h(0) + h(-q) + h(-2q) \le \sum_x h(x) = 1 , \]
a contradiction. 
Thus $h \le \frac{1}{2}$ everywhere and
\[ H(h) = \sum_{x}h(x) \log_2\bigg(\frac{1}{h(x)}\bigg) \ge \sum_{x} h(x) = 1 = H(f), \]
so our $f$ is symmetrization resistant. 
%
%In the case $2 \mid q$, we obtain symmetrizers 
%$\mathbb{1}_{-\frac{q}{2}} \in \calY(f)$ and $\mathbb{1}_{\frac{3q}{2}} \in \calY(f)$ so that our choice of $f$ is not symmetrization resistant. 
\end{proof}
\end{theorem}

\begin{figure}
\centering
\begin{tikzpicture}[scale=5]

% Vértices del triángulo equilátero
\coordinate (P0) at (0,0);
\coordinate (P2) at (1,0);
\coordinate (P1) at (0.5,{sqrt(3)/2});

% Puntos para subdivisiones
\coordinate (M)  at (0.5,0);                % punto medio de la base
\coordinate (H)  at (0.5,{sqrt(3)/6});      % altura de la región B
\coordinate (L)  at (0.25,{sqrt(3)/4});     % extremo izquierdo línea horizontal
\coordinate (R)  at (0.75,{sqrt(3)/4});     % extremo derecho línea horizontal

% Contorno del conjunto
\draw[red, line width=1.2pt] (P0)--(P1)--(P2);
\draw[line width=1.2pt] (P0)--(P2);

% Subdivisiones internas
\draw[dashed] (L)--(R);     % línea horizontal
%\draw[dashed] (P1)--(M);    % línea vertical
\draw[dashed] (L)--(M);     % diagonal izquierda
\draw[dashed] (R)--(M);     % diagonal derecha

% Etiquetas regiones
\node at (0.5,0.58) {$A$};
\node at (0.5,0.30) {$B$};
\node at (0.28,0.22) {$C$};
\node at (0.72,0.22) {$C'$};

% Círculos abiertos en vértices
\fill[white] (P0) circle (0.013);
\draw (P0) circle (0.013);

\fill[white] (P1) circle (0.013);
\draw (P1) circle (0.013);

\fill[white] (P2) circle (0.013);
\draw (P2) circle (0.013);

% Etiquetas de vértices
\node[below left]  at (P0) {$\mathbb{1}_{\{0\}}$};
\node[above]       at (P1) {$\mathbb{1}_{\{1\}}$};
\node[below right] at (P2) {$\mathbb{1}_{\{2\}}$};
\node[above left]  at (L)  {$(\frac{1}{2},\frac{1}{2},0,0)$};
\node[above right] at (R)  {$(0,\frac{1}{2},\frac{1}{2},0)$};
\node[below] at (M)  {$(\frac{1}{2},0,\frac{1}{2},0)$};

\end{tikzpicture}
\caption{The facet of the probability simplex for $\mathbb{Z}_4$ containing the distributions with support contained in $\{0,1,2\}$. 
The red sides are symmetrization resistant by the case $\lvert F\rvert=2$ of \cref{thm:Z4}, and the regions $A$, $B$, $C$ and $C'$ are treated in the case $\lvert F\rvert=3$ of the proof of \cref{thm:Z4}.
 }
\label{fig:Z4_012}
\end{figure}

\begin{figure}
\centering
\tdplotsetmaincoords{0}{0}
\begin{tikzpicture}[tdplot_main_coords, scale=5]

% Vértices del tetraedro
\coordinate (A) at (0,0,0);
\coordinate (B) at (1,0,0);
\coordinate (C) at (0.5,0.9,0);
\coordinate (D) at (0.5,0.3,1);

% Aristas coloridas (rojas y negras)
\draw[very thick, red] (A) -- (B);   % (0,1)
\draw[very thick, red] (B) -- (C);   % (1,2)
\draw[very thick, black] (C) -- (A); % (2,0)
\draw[very thick, red] (A) -- (D);   % (0,3)
\draw[very thick, black] (B) -- (D); % (1,3)
\draw[very thick, red] (C) -- (D); % (2,3)

% Vértices
\fill[white] (A) circle (0.013);
\draw (A) circle (0.013) node[below left] {$\mathbb{1}_{\{0\}}$};
\fill[white] (B) circle (0.013);
\draw (B) circle (0.013) node[below right] {$\mathbb{1}_{\{1\}}$};
\fill[white] (C) circle (0.013);
\draw (C) circle (0.013) node[left] {$\mathbb{1}_{\{2\}}$};
\fill[white] (D) circle (0.013);
\draw (D) circle (0.013) node[above right] {$\mathbb{1}_{\{3\}}$};
\end{tikzpicture}

\caption{The probability simplex $\Delta_4$. 
Symmetrization resistant distributions in $\mathbb{Z}_4$ are represented in red (\cref{thm:Z4}).}
\label{fig:Z4 symmres}
\end{figure}

\subsection{Two-point random variables}

\begin{proposition}
For all odd $k \ge 3$ and any distinct $i,j \in [k]$, there exist asymmetric two-point distributions supported on $\{i,j\} \subseteq \mathbb{Z}_k$ which are not symmetrization resistant. 

\begin{proof}
By affine invariance (\cref{lem:affine_invariance}), we may take $p > 1/2$, and we may take our two-point PMF $f = f_k$ on $\mathbb{Z}_k$ to be such that $f_k(-1) = 1-p$ and $f_k(1) = p$ and zero elsewhere. 
We will construct a PMF $g = g_k$ for each $k$ such that $f \convolve g$ is symmetric and 
\begin{equation}\label{eq:entropy limit property} \lim_{p \downarrow \frac{1}{2}} H(g_k) = 0 . \end{equation}

We begin by writing $k = 2l + 1$ and $\delta = \frac{2p-1}{kp-l} = \frac{2p-1}{2lp+p-l}$. 
Define $g(0) = 1-l\delta$; the values of $g$ at the $2l$ remaining elements of $\mathbb{Z}_k = \mathbb{Z}_{2l+1}$ will be $\delta$ ($l$ times) and $0$ ($l$ times), according to the following pattern:
\begin{enumerate}[(i)]
\item If $k \equiv_4 1$, 
 $g_k(1) \coloneqq g_k(2) \coloneqq 0$ 
and $g_k(3) \coloneqq g_k(4) \coloneqq \delta$. 
\item If $k \equiv_4 3$, 
 $g_k(1) \coloneqq \delta$,
 $g_k(2) \coloneqq 0$;
and when $k > 3$, 
 $g_k(3) \coloneqq 0$ and 
 $g_k(4) \coloneqq \delta$. 
\item For all $k \ge 11$, for all integers $r \in [5,l]$, $g_k(r) \coloneqq g_k(r-4)$. 
%(it is easily verified that this condition does not contradict our already-specified values for $g_k(1)$ up to $g_k(5)$ for any $k$). 
\item Finally, for all $r \in [l]$, $g(-r) \coloneqq 0$ iff $g(r)=\delta$ and $g(-r)\coloneqq \delta$ iff $g(r)=0$. 
\end{enumerate}
Intuitively, $g$ consists of one large mass at the origin, and alternating pairs of zeros and $\delta$s on the rest of $\mathbb{Z}_k$. 

Note that $\lim_{p \downarrow \frac{1}{2}} \delta = 0$, so that
$\lim_{p \downarrow \frac{1}{2}} g_k = \mathds{1}_{0}$ and therefore $g_k$ satisfies \eqref{eq:entropy limit property}. 
It remains to show that $g_k$ is a symmetrizer of $f_k$, i.e., that $(g_k \convolve f_k)(r) = (g_k \convolve f_k)(-r)$ for all $r \in [l]$. 

For $r \ge 2$, noting that $g(\pm(r-1)), g(\pm(r+1)) \in \{0, \delta\}$ with $g(-(r-1)) = \delta$ iff $g(r-1) = 0$ iff $g(r+1) = \delta$ iff $g(-(r+1))=0$, 
we have $g(-(r-1)) = g(r+1)$ and $g(-(r+1)) = g(r-1)$, so that
\[
(f \convolve g)(r) 
= pg(r-1) + (1-p)g(r+1)
= pg(-r-1) + (1-p)g(-r+1) 
= (f \convolve g)(-r) .
\]
Finally, noting that the values of $g_k(2) = 0$ and $g_k(-2) = \delta$ for all $k$, we compute
\begin{align*}
(f\convolve g)(1) 
&= pg(0) + (1-p)g(2) \\
%&= pg(0) \\
&= \frac{p^2}{2lp + p -l} \\
&= \frac{p^2 - (1-p)p + (1-p)p}{2lp + p -l} \\
&= \frac{p(2p - 1)}{2lp + p -l} + (1-p)g(0) \\
&= p \delta + (1-p)g(0) \\
%&= p g(-2) + (1-p)g(0) \\
&= (f\convolve g)(-1). \qedhere
\end{align*}
\end{proof}
\end{proposition}

%\subsubsection{Enumerating extreme points}
Returning to the Krein-Milman approach, we would like to find all the extreme points of $\calY(f)$ for $f \in \mathbb{Z}_p$.

One way to find extreme points is to start with the symmetrizers $\hatf$ given in \cref{lem:hatf_are_symmetrizers} and reduce their support one element at a time until they are extreme points. 
However, as the next example will show, this is not sufficient to find all extreme points. 
In particular, for $\mathbb{Z}_p$ with $p$ odd, there are only $\frac{p+1}{2}$ functions $\hatf$. 
In the case of $\mathbb{Z}_5$ these $\hatf$ could be reduced to (at most) $3$ extreme points, but, as we will now see, for some $f$ there are actually $4$.

\begin{example}[$\mathbb{Z}_5$]
We compute the extreme points of an asymmetric two-point $X \sim f$ in $\mathbb{Z}_5$. 
We assume (by affine invariance) that $F \coloneqq \supp(f) = \{\pm1\}$ and $p \coloneqq f(1) > \frac{1}{2}$. 
Take $g = (g_{-2},g_{-1},g_0, g_1, g_2) \in \Ex(\Sym(f))$ (where we have written $g_i \coloneqq g(i)$ for convenient values of $i$) write $G \coloneqq \supp(g)$. 
Note that $\supp(f \convolve g) = F + G$; thus $F + G$ must (at the very least) be symmetric in the sense that $F+G = -(F+G)$. 

Clearly $\abs{G} = 1$ is impossible so we start by examining the case $\abs{G}=2$. 
Writing sets without brackets for convenience, we immediately see that $G = \pm1$ and $G=\pm2$ are only possibilities when $\abs{G}=2$. %$-2,-1$, $-2,0$, $-2,1$, $-2,2$, $-1,0$, $-1,2$, $0,1$, $0,2$, and $1,2$ cannot be the supports of symmetrizers, because the convolution of a variable with these supports would not be symmetric. 
The case $G = \pm2$ would force 
$p-pg_2 = p(1-g_2) = (f \convolve g)(-1) = (f \convolve g)(1) = (1-p)g_2 = g_2-pg_2$, thus $g_2=p$; 
but also
$pg_2 = (f \convolve g)(-2) = (f \convolve g)(2) = (1-p)(1-g_2) = 1-p-g_2+pg_2$, thus $g_2=1-p$ and we conclude $p=\frac{1}{2}$, a contradiction. 
The case $G=\pm1$ produces the unique solution $g = -f$. 
Thus the unique element of $\Ex(\Sym(f))$ with support cardinality 2 is $g=-f$, with support $G=\pm1$. 

Let us now consider the case $\abs{G}=3$; there are $\choose{5}{3}=10$ possibilities. 
Note that $G$ must be minimal; that is, there can be no $s \in \Sym(f)$ with $\supp(s) \subsetneq G$. 
Thus we may immediately disregard the 3 possibilities where $G \supset \pm1$. 
We also find that $G=-2,0,1$ and $G=-1,0,2$ are impossible because $F+G$ would not be symmetric. 
The remaining possibilities are $G=-1,\pm2$, $G=0,\pm2$, $G=1,\pm2$, $G=-2,-1,0$, and $G=0,1,2$. 
Note that it is easily seen that in each case the symmetry equations are linearly independent; thus %(together with the constraint that the probabilities must sum to 1) 
each case admits at most one solution. 

In the case $G=-1,\pm2$, we compute 
\( pg_{-2} = (f \convolve g)(-1) = (f \convolve g)(1) = qg_2 \)
so that $g_{-2} = \frac{q}{p}g_2$. 
Also, 
\( q g_{-1} + p g_2 = (f \convolve g)(-2) = (f \convolve g)(2) = q g_{-2} = \frac{q^2}{p}g_2,  \)
so that 
\[ 0 \le pqg_{-1} = (q^2-p^2) g_2 \le 0 \]
thus $g_{-1} = g_2 = 0$, contradicting our choice of $G$. 

In the case $G=1,\pm2$, we again have $g_{-2} = \frac{q}{p}g_2$, 
and now 
%\[ p g_2 = (f \convolve g)(-2) = (f \convolve g)(2) = q g_{-2} + pg_1 \]
%\[ pg_2 = \frac{q^2}{p}g_2 + pg_1 \]
\[ (p^2 - q^2) g_2 = p^2 g_1 \]
so that $g_1 = \frac{2p-1}{p^2} g_2$ and
\[ 1 = g_2 + g_{-2} + g_1 = g_2\left(1 + \frac{1-p}{p} + \frac{2p-1}{p^2}\right)  = g_2 \left(\frac{p^2 + p-p^2 + 2p-1}{p^2}\right); \]
hence $g_2=\frac{p^2}{3p-1}$ and
$g = \frac{1}{3p-1}(pq, 0, 0, 2p-1, p^2)$. 

In the case $G = 0, \pm2$, the unique solution is $g = \frac{1}{2}(p, 0, 1, 0, q)$ (a solution of the type introduced in \cref{lem:hatf_are_symmetrizers}). 

In the case $G=-2,-1,0$ %we have $qg_{-1} = qg_{-2}$ 
we immediately deduce $g_{-1} = g_{-2}$; the other symmetry equation is 
\( qg_0 + pg_{-2} = pg_0, \)
so that $g_0 = \left(\frac{p}{p-q}\right)g_{-2}$. 
By total probability, $g_{-2} = \frac{2p-1}{5p-2}$, whence 
\( g = \frac{1}{5p-2}(2p-1, 2p-1, p, 0, 0) .\)

In the case $G = 0,1,2$, we immediately conclude $g_1 = g_2$ and the other symmetry equation is $qg_0 = qg_2 + pg_0$. 
Thus $0 \ge (q-p)g_0 = qg_2 \ge 0$, whence $g_0 = g_2 = 0$, contradicting our choice of $G$. 

% general symmetry equations used in the above: 
%\[ q g_0 + pg_{-2} = (f \convolve g)(-1) = (f \convolve g)(1) = qg_2 + pg_0 \]
%and
%\[ q g_{-1} + p g_2 = (f \convolve g)(-2) = (f \convolve g)(2) = q g_{-2} + pg_1 \]

Finally, it is easily checked that any $G$ with $\abs{G} \in \{4,5\}$ would not be minimal; thus there are no more extreme points. 
We conclude that 
\[
\Ex(\Sym(f)) = 
\begin{Bmatrix} 
(0,p,0,q,0), \\
\frac{1}{2}(p, 0, 1, 0, q), \\
\frac{1}{3p-1}(pq, 0, 0, 2p-1, p^2), \\
\frac{1}{5p-2}(2p-1, 2p-1, p, 0, 0) 
\end{Bmatrix}.
\]
Numerically computing the minimum entropy of these extreme points reveals that $f$ is symmetrization resistant for values of $p$ outside the interval $(0.46075,0.53925)$. 
\end{example}

\section{Remarks}
\label{sec:rmk}

Although we did not do so in this paper, it may be of interest to also consider another measure of asymmetry given by 
$$
\tilde{A}_\Phi(\mu) := \inf_{\{\nu\in Sym(\mu)\}} \Phi(\mu*\nu)-\Phi(\mu) .
$$
Note that if $\Phi=V$ and $\mu\in\calP(\RL)$, then $\tilde{A}_V(\mu)=A_V(\mu)$ due to the additivity of variance under convolution. In general, however,
the concavity of $\Phi$ and the fact that convolution can be seen as a limit of convex combinations of shifts implies that 
$\tilde{A}_\Phi(\mu)\leq A_\Phi(\mu)$. For $\Phi=H$, we may write $\tilde{A}_H(X)=\inf_{\{Y\in Sym(X)\}} H(X+Y)-H(X)$,
which has an interpretation as a minimization of mutual information.

We note that the optimization problems arising in the definition of both ${A}_\Phi$ and $\tilde{A}_\Phi$ are reminiscent of important optimization problems that arise in probability and information theory; these are of the form:
``Given the distribution $\mu\in \calP(G)$ of a random variable $X$ taking values in $G$, what is the distribution $\nu$ within a relevant feasible set such that the convolution $\mu\convolve \nu$ minimizes some objective function $\Phi:\calP(G)\ra\RL$?''
A classical example of such optimization problems arises in the context of information and communication theory.
Indeed, C. Shannon \cite{Sha48} showed that the capacity of a point-to-point communication channel with signals taking values in a discrete group $G$ and with additive noise $Y$ can be determined as
$$
C=\sup_{X} \; [ H(X+Y)-H(Y)] .
$$
Moreover, in the setting of real-valued signals, he showed that 
$$
C=\sup_{X : V(X) \le s} \; [ h(X+Y)-h(Y)] ,
$$
where $s$ is the constraint on the signal power, and $h$ is the differential entropy (defined for probability density functions $f$ on $\RL$ by the integral $h(f)=-\int_{\RL} f(x)\log f(x) dx$ with respect to Lebesgue measure). 
Since Gaussian distributions uniquely maximize the entropy subject to a variance constraint, and since the relative entropy from the class of Gaussian distributions is precisely the difference of entropies, the problem of determining the capacity reduces to
finding $Y$ (subject to a variance constraint) independent of $X$ 
such that the relative entropy of $X+Y$ from Gaussian is minimized. 
%The initial motivation of \cite{KMSVV99} was \emph{Gaussianization} of $X+Y$: that is, given $X$, is it possible to find $Y$ such that $X+Y$ is Gaussian? 
%The requirement that $X+Y$ be symmetric is a relaxation of the Gaussian constraint. 
%Indeed, though this was not noticed by  \cite{KMSVV99}, the Gaussianization problem is equivalent 
%to the question of determining the capacity-achieving distribution for an additive noise channel with noise $X$, which is known to be a challenging problem in information theory. 
%Indeed, the channel capacity for the channel $Y \mapsto X+Y$ is the supremum of the mutual information $I(X+Y;Y) = H(X+Y) - H(X + Y \mid Y) = H(X+Y) - H(X)$. 
%Since the distribution of the noise $X$ is fixed, maximizing the mutual information $I(X+Y;Y)$ is equivalent to maximizing $H(X+Y)$. 
%Since $X$ and $Y$ are independent, $\Var(X+Y)=\Var(X)+\Var(Y)$, so the transmit power variance constraint $\Var(Y) \le c$ induces an equivalent variance constraint on $X+Y$. 
%Thus, the problem is equivalent to maximizing the entropy $H(X+Y)$ subject to a constraint $\Var(X+Y) \le c$; 
%thus the capacity-achieving distribution $Y$ is one that results in Gaussian $X+Y$. 
See, e.g., \cite{ENT18:1, MNT19:isit,  MNT21} for some recent developments in connection with this question, which was the original motivation of \cite{KMSVV99} (in the sense that symmetry is a relaxation of Gaussianity). 
% While this paper focused exclusively on  symmetrization, 
% we shall not address these questions any further, instead 

Exploration of symmetrization resistance for nonabelian groups would be particularly interesting, and one expects the group structure to play a major role. By Cayley's theorem, determining  the symmetrization resistant distributions on symmetric groups of all finite orders would yield the same on arbitrary finite groups. However, even the determination on  symmetrization resistant distributions on symmetric groups of small orders seems nontrivial.

\bibliographystyle{plain} 
\bibliography{pustak,extra} %bibliography

\end{document}